\DeclareMathOperator{\diag}{diag}
\DeclareMathOperator{\Exp}{Exp}
\DeclareMathOperator{\grad}{grad}
\DeclareMathOperator{\Grass}{Grass}
\DeclareMathOperator{\qf}{qf}
\DeclareMathOperator{\Rank}{rank}
\let\skew\relax
\DeclareMathOperator{\skew}{skew}
\DeclareMathOperator{\SPD}{SPD}
\DeclareMathOperator{\St}{St}
\DeclareMathOperator{\Sym}{Sym}
\DeclareMathOperator{\tr}{tr}
\def\D{\mathrm{D}}
\def\id{\mathrm{id}}
\def\P{\mathrm{P}}
\def\FR{{\textrm{FR}}}
\def\PRP{{\textrm{PRP}}}
\def\DY{{\textrm{DY}}}
\def\HS{{\textrm{HS}}}
\def\HZ{{\textrm{HZ}}}
\def\CD{{\textrm{CD}}}
\def\LS{{\textrm{LS}}}
\def\bw{{\textrm{bw}}}
\def\RFR{{\textrm{R-FR}}}
\def\RPRP{{\textrm{R-PRP}}}
\def\RDY{{\textrm{R-DY}}}
\def\RHS{{\textrm{R-HS}}}
\def\RCD{{\textrm{R-CD}}}
\def\RLS{{\textrm{R-LS}}}
\def\RPRPFR{{\textrm{R-PRP--FR}}}
\def\RHSDY{{\textrm{R-HS--DY}}}
\def\RLSCD{{\textrm{R-LS--CD}}}
\def\RSD{{\textrm{R-SD}}}
\def\M{\mathcal{M}}
\def\N{\mathcal{N}}
\def\T{\mathscr{T}}
\def\S{\mathscr{S}}
\newtheorem{proposition}{Proposition}[section]
\newtheorem{theorem}{Theorem}[section]
\newtheorem{definition}{Definition}[section]
\theoremstyle{remark}
\newtheorem{assumption}{Assumption}[section]
\newtheorem{example}{Example}[section]
\newtheorem{problem}{Problem}[section]
\newtheorem{remark}{Remark}[section]
\crefname{proposition}{Proposition}{Propositions}
\crefname{theorem}{Theorem}{Theorems}
\crefname{corollary}{Corollary}{Corollaries}
\crefname{lemma}{Lemma}{Lemmas}
\crefname{definition}{Definition}{Definitions}
\crefname{remark}{Remark}{Remarks}
\crefname{assumption}{Assumption}{Assumptions}
\crefname{example}{Example}{Examples}
\crefname{problem}{Problem}{Problems}
\crefname{remark}{Remark}{Remarks}
\crefname{section}{Section}{Sections}
\crefname{subsection}{Section}{Sections}
\crefname{subsubsection}{Section}{Sections}
\crefname{figure}{Figure}{Figures}
\title{Riemannian conjugate gradient methods:\\ General framework and specific algorithms\\with convergence analyses\footnotetext{\textbf{Funding:} This work was funded by JSPS KAKENHI Grant number JP20K14359.}}
\author{Hiroyuki Sato\thanks{Department of Applied Mathematics and Physics, Kyoto University, Kyoto, Japan\newline ({\tt hsato@i.kyoto-u.ac.jp}).}}
\begin{document}
\maketitle

\begin{abstract}
This paper proposes a novel general framework of Riemannian conjugate gradient methods, that is, conjugate gradient methods on Riemannian manifolds.
The conjugate gradient methods are important first-order optimization algorithms both in Euclidean spaces and on Riemannian manifolds.
While various types of conjugate gradient methods are studied in Euclidean spaces, there have been fewer studies on those on Riemannian manifolds.
In each iteration of the Riemannian conjugate gradient methods, the previous search direction must be transported to the current tangent space so that it can be added to the negative gradient of the objective function at the current point.
There are several approaches to transport a tangent vector to another tangent space.
Therefore, there are more variants of the Riemannian conjugate gradient methods than the Euclidean case.
In order to investigate them in more detail, the proposed framework unifies the existing Riemannian conjugate gradient methods such as ones utilizing a vector transport or inverse retraction and also develops other methods that have not been covered in previous studies.
Furthermore, sufficient conditions for the convergence of a class of algorithms in the proposed framework are clarified.
Moreover, the global convergence properties of several specific types of algorithms are extensively analyzed.
The analyses provide the theoretical results for some algorithms in a more general setting than the existing studies and completely new developments for the other algorithms.
Numerical experiments are performed to confirm the validity of the theoretical results.
The results also compare the performances of several specific algorithms in the proposed framework.
\end{abstract}

\noindent {\bf Keywords:}
Conjugate gradient method, Riemannian optimization, Riemannian manifold, Vector transport, Inverse retraction

\section{Introduction}
\label{sec:intro}
Riemannian optimization (i.e., optimization on Riemannian manifolds) has recently attracted increasing attention owing to its vast variety of applications, including machine learning, control engineering, and numerical linear algebra~\cite{AbsMahSep2008,boumal2020introduction,sato2021riemannian}.
While constrained Riemannian optimization problems have also been studied, we focus on unconstrained Riemannian optimization problems in this paper.
The class of unconstrained Riemannian optimization problems is also important since it overlaps the class of constrained optimization problems in Euclidean space.
This is because a constrained Euclidean optimization problem can be regarded as an unconstrained Riemannian optimization problem if the set of constraints forms a Riemannian manifold.
An important example is the Stiefel manifold $\St(p, n) := \{X \in \mathbb{R}^{n \times p} \mid X^T X = I_p\}$, where $p \le n$.
Any optimization problem in $\mathbb{R}^{n \times p}$ with the constraint $X^T X = I_p$ (and without any other constraint) on the decision variable matrix $X$ can be considered as an unconstrained optimization problem on $\St(p,n)$.
Another example is the manifold $\SPD(n)$, which comprises all $n \times n$ symmetric positive definite matrices.
Furthermore, the class of unconstrained Riemannian optimization problems also covers problems that are not defined in Euclidean space, e.g., optimization problems on the Grassmann manifold $\Grass(p,n) := \{W \subset \mathbb{R}^{n} \mid \text{$W$ is a $p$-dimensional subspace of $\mathbb{R}^n$}\}$ for $p \le n$, whose decision variable $W$ is a subspace of $\mathbb{R}^n$, not a vector or matrix.

An important feature of unconstrained Riemannian optimization problems is that they may be solved using the generalized versions of Euclidean unconstrained optimization methods, which have been studied intensively, if we successfully generalize the Euclidean methods to those on Riemannian manifolds appropriately.
Various Euclidean optimization methods have been generalized to the Riemannian case~\cite{absil2019collection,AbsMahSep2008,boumal2020introduction,edelman1998geometry,sato2021riemannian}.
Among them, first-order methods (such as the steepest descent (SD) and conjugate gradient (CG) methods) are important because the computational cost for each iteration is comparatively low.
In contrast, second-order methods (such as Newton's method) can have a property that they converge relatively quickly.
However, their computational cost for each iteration can be high.
For example, in each iteration of Newton's method on an $n$-dimensional Riemannian manifold, Newton's equation, which is a linear system of $n$ dimension, should be solved.
This costs at least $O(n^3)$ flops.
Furthermore, Newton's method does not guarantee the global convergence of the generated sequence.
This indicates that we must find an approximate solution that is sufficiently close to an optimal solution in advance.
Then, Newton's method with the approximate solution can generate a sequence converging to the solution.
Therefore, first-order methods retain their importance in that they can find an approximate solution in a moderately fast time.

In addition, first-order methods are appealing when large-scale optimization problems are to be solved because each iteration computationally costs much less than the second-order methods.
Among first-order methods, the framework of CG methods is one of the most important ones.
Regarding the Euclidean spaces, CG methods have been being studied and developed.
They can be regarded as the modified version of the SD method, which is a simple and basic algorithm.
Therefore, CG methods are not so complicated either; nevertheless, CG methods outperform the SD method.

In this paper, we address the CG methods on Riemannian manifolds, which we refer to as the Riemannian conjugate gradient (R-CG) methods.
Several types of R-CG methods have been studied~\cite{edelman1998geometry,edelman1996conjugate,lichnewsky1979une,ring2012optimization,sakai2020hybrid,sakai2021sufficient,sato2016dai,sato2021riemannian,sato2015new,smith1994optimization,zhu2017riemannian,zhu2020riemannian,zhu2021cayley}.
In some of these studies, parallel translation along the geodesics are utilized.
This type of approach is theoretically natural; nonetheless, there is room for computational improvement.
Other studies use a more general map called vector transport.
Using a vector transport typically enables the execution of each iteration of R-CG methods more easily than using parallel translation, whereas it may negatively affect or sometimes destroy the convergence property of the algorithm.
Therefore, in this paper, we provide a general framework of the R-CG methods, which includes all the successful existing methods, and we clarify the conditions with which the R-CG methods have a good convergence property.

The contributions of this paper are two-fold:
(i) We provide a novel general framework of the R-CG methods, which unifies all the existing R-CG methods and covers a wider class, and we clarify the assumptions that are naturally required to apply an R-CG method to a Riemannian optimization problem.
(ii) We generalize several types of standard Euclidean CG methods to the Riemannian case in our proposed framework. We also provide global convergence analyses for some specific practical algorithms.
Although we do not generalize all the Euclidean CG methods in this paper because there are various algorithms, this paper definitely provides a basis for studies on R-CG methods.

This paper is organized as follows.
In the remainder of this section, we introduce the notation used throughout the paper.
In \cref{sec:2}, we review the Euclidean CG and some existing R-CG methods, and we clarify what should be further addressed for the existing methods using some examples as a motivation for this study.
In \cref{sec:3}, we propose our new general framework of R-CG methods.
Thereafter, we introduce several types of practical R-CG methods as examples of the proposed framework.
Some conditions that are imposed on step lengths are also proposed.
In \cref{sec:4}, we summarize some standard assumptions for a Riemannian optimization problem to be solved.
We also generalize Zoutendijk's theorem to our framework.
In \cref{sec:5}, we provide the convergence analyses of several types of R-CG methods and discuss their behavior in our framework.
\cref{sec:6} provides the results of some numerical experiments of different types of R-CG methods, in which they are compared.
Finally, we conclude the paper in \cref{sec:7}.

\subsection{Notation}
The tangent space of a manifold $\M$ at $x \in \M$ is denoted as $T_x \M$, and the tangent bundle of $\M$ is denoted as $T\M := \{(x, \eta) \mid \eta \in T_x \M, \ x \in \M\}$.
For a map $F \colon \M \to \N$ between two manifolds $\M$ and $\N$, $\D F(x) \colon T_x \M \to T_{F(x)}\N$ denotes the derivative of $F$ at $x \in \M$.

In what follows, $\M$ denotes a Riemannian manifold with a Riemannian metric $\langle \cdot, \cdot\rangle$; therefore, the tangent space $T_x \M$ at any $x \in \M$ is an inner product space with the inner product $\langle \cdot, \cdot\rangle_x$, which is given by the Riemannian metric $\langle \cdot, \cdot\rangle$.
In the tangent space $T_x \M$ with the inner product $\langle \cdot, \cdot\rangle_x$, the norm of $\eta \in T_x \M$ is defined as $\|\eta\|_x := \sqrt{\langle \eta, \eta\rangle_x}$.
The Riemannian gradient $\grad f(x)$ of a function $f \colon \M \to \mathbb{R}$ at $x \in \M$ is defined as a unique tangent vector at $x$ satisfying $\langle \grad f(x), \eta\rangle_x = \D f(x)[\eta]$ for any $\eta \in T_x \M$.

The Euclidean space $\mathbb{R}^n$ with the standard inner product can be regarded as a Riemannian manifold with the Riemannian metric $\langle \cdot, \cdot\rangle$ defined by $\langle \xi, \eta\rangle_x := \xi^T \eta$ for any $\xi, \eta \in T_x \mathbb{R}^n \simeq \mathbb{R}^n$ and $x \in \mathbb{R}^n$.
In the Euclidean space $\mathbb{R}^n$ with this Riemannian metric, the Riemannian gradient $\grad f(x)$ of $f \colon \mathbb{R}^n \to \mathbb{R}$ at $x \in \mathbb{R}^n$ is equal to $\nabla f(x) := (\partial f(x) / \partial x_i) \in \mathbb{R}^n$.
We refer to this as the Euclidean gradient\footnote{In Riemannian geometry, the symbol $\nabla$ usually denotes an affine connection on Riemannian manifold $\M$. However, because we do not explicitly use affine connections in this paper and we sometimes compare the Euclidean and Riemannian CG methods, we distinguish between the Euclidean and Riemannian gradients of function $f$ in $\mathbb{R}^n$ and on $\M$ by writing them as $\nabla f$ and $\grad f$, respectively.}.
The Euclidean norm (i.e., the $2$-norm) of $a \in \mathbb{R}^n$ is denoted by $\|a\|_2 := \sqrt{a^T a}$.

We consider the following unconstrained optimization problem for minimizing a sufficiently smooth\footnote{In \cref{sec:4}, we clarify the condition on smoothness of $f$ required in convergence analyses in \cref{sec:5}.} objective function $f \colon \M \to \mathbb{R}$ defined on a Riemannian manifold~$\M$:
\begin{problem}
\label{prob:gen}
\begin{alignat*}{2}
&\text{minimize}&\quad &f(x)\\
&\text{subject to}&\quad &x \in \M.
\end{alignat*}
\end{problem}
In most Riemannian optimization algorithms, we use a retraction~$R \colon T\M \to \M$, which is a generalization of the exponential map on $\M$~\cite{AbsMahSep2008,adler2002newton}.
With the notation $R_x := R|_{T_x \M} \colon T_x \M \to \M$, a retraction $R$ on $\M$ is defined to satisfy $R_x(0_x) = x$ and $\D R_x(0_x) = \id_{T_x \M}$ for all $x \in \M$, where $0_x$ and $\id_{T_x \M}$ are the zero vector of $T_x \M$ and identity map in $T_x \M$, respectively.

\section{Euclidean CG and existing Riemannian CG methods}
\label{sec:2}
In this section, we review the linear and nonlinear CG methods in Euclidean spaces and their extension to Riemannian optimization, i.e., R-CG methods.
In particular, we discuss what have been done so far and what should be further studied in generalizing the Euclidean CG methods to Riemannian ones.

\subsection{Euclidean CG methods}
The CG method was originally proposed as an algorithm to solve linear equations of the form $Ax = b$ with an $n \times n$ symmetric positive definite matrix $A$ and $b \in \mathbb{R}^n$~\cite{hestenes1952methods}.
This method, sometimes known as the linear CG method, minimizes the strictly convex quadratic function $f(x) := x^T A x / 2 - b^T x$ for $x \in \mathbb{R}^n$ to find the minimum point $x_*$ of $f$, where $x_* = A^{-1}b$ because $\nabla f(x) = Ax - b$.
Therefore, we consider the linear CG method as an optimization algorithm for specific type of problems in Euclidean spaces.

Thereafter, the linear CG method was extended to algorithms to solve the optimization problems with more general objective functions~\cite{fletcher1964function}.
We refer to such extended algorithms as nonlinear CG methods or simply CG methods.
To solve \cref{prob:gen} with $\M = \mathbb{R}^n$ and a general objective function $f \colon \mathbb{R}^n \to \mathbb{R}$, (nonlinear) CG methods with the initial point $x_0 \in \mathbb{R}^n$ generate a sequence $\{x_k\}$ in $\mathbb{R}^n$ by
\begin{equation}
\label{eq:E_update}
x_{k+1} = x_k + t_k\eta_k, \qquad k \ge 0.
\end{equation}
Here, the search directions $\eta_k \in \mathbb{R}^n$ are computed as $\eta_0 = -\nabla f(x_0)$ and
\begin{equation}
\label{eq:E_direction}
\eta_{k+1} =
-\nabla f(x_{k+1}) + \beta_{k+1}\eta_{k}
\end{equation}
for $k \ge 0$,
where $\beta_{k+1} \in \mathbb{R}$.
For each $k \ge 0$, the step length $t_k > 0$ in~\eqref{eq:E_update} is computed to satisfy, e.g., the Wolfe conditions
\begin{equation}
\label{eq:E_armijo}
f(x_k + t_k \eta_k) \le f(x_k) + c_1 t_k \nabla f(x_k)^T \eta_k
\end{equation}
and
\begin{equation}
\label{eq:E_wolfe}
\nabla f(x_k+t_k\eta_k)^T \eta_k \ge c_2 \nabla f(x_k)^T \eta_k
\end{equation}
with constants $c_1$ and $c_2$ satisfying $0 < c_1 < c_2 < 1$ or the strong Wolfe conditions~\eqref{eq:E_armijo} and
\begin{equation}
\label{eq:E_swolfe}
|\nabla f(x_k+t_k\eta_k)^T \eta_k| \le c_2 |\nabla f(x_k)^T \eta_k|.
\end{equation}
The inequality~\eqref{eq:E_armijo} alone is known as the Armijo condition.
The conditions~\eqref{eq:E_wolfe} and \eqref{eq:E_swolfe} are meaningful if each search direction $\eta_k$ is a descent direction at $x_k$, i.e., $\nabla f(x_k)^T \eta_k < 0$.

The updating rule~\eqref{eq:E_update} is a standard strategy in line search algorithms for optimization problems in Euclidean spaces, and the formula~\eqref{eq:E_direction} for search directions characterizes the CG methods.
The computation of real values $\beta_{k+1}$ is crucial for the performance of CG methods.
Considering the linear CG method where $f(x) = x^T A x / 2 - b^T x$, $\beta_{k+1}$ in~\eqref{eq:E_direction} is uniquely determined for each $k \ge 0$ as $\beta_{k+1} = \| \nabla f(x_{k+1})\|_2^2 / \| \nabla f(x_{k})\|_2^2$ so that the search directions are mutually $A$-conjugate, i.e., $\eta_k^T A \eta_l = 0$ for $0 \le k < l \le n-1$.
On the other hand, several types of $\beta_{k+1}$ have been studied for nonlinear CG methods.
Defining $g_k := \nabla f(x_k)$ and $y_k := g_k - g_{k-1}$, the following six types of $\beta_{k+1}$ are considered as standard ones:
\begin{alignat}{3}
\beta_{k+1}^{\FR} &= \frac{\|g_{k+1}\|_2^2}{\|g_{k}\|_2^2},
&\qquad
\beta_{k+1}^{\DY} &= \frac{\|g_{k+1}\|_2^2}{y_{k+1}^T \eta_k},
&\qquad
\beta_{k+1}^{\CD} &= \frac{\|g_{k+1}\|_2^2}{-g_{k}^T \eta_{k}},\notag
\\
\beta_{k+1}^{\PRP} &= \frac{g_{k+1}^T y_{k+1}}{\|g_{k}\|_2^2},
&\qquad
\beta_{k+1}^{\HS} &= \frac{g_{k+1}^T y_{k+1}}{y_{k+1}^T \eta_k},
&\qquad
\beta_{k+1}^{\LS} &= \frac{g_{k+1}^T y_{k+1}}{-g_{k}^T \eta_{k}}.
\label{eq:E_beta}
\end{alignat}
They were proposed by Fletcher and Reeves~\cite{fletcher1964function}, Dai and Yuan~\cite{dai1999nonlinear}, Fletcher~\cite{fletcher2013practical}, Polak and Ribi\`{e}re~\cite{polak1969note} and Polyak~\cite{polyak1969conjugate}, Hestenes and Stiefel~\cite{hestenes1952methods}, and Liu and Storey~\cite{liu1991efficient}, respectively.
Here, ``CD" in $\beta_{k+1}^{\CD}$ represents ``conjugate descent."
In the linear CG method, all the above six types of $\beta_{k+1}$ mathematically coincide.
However, regarding the nonlinear CG methods, they do not necessarily take the same value, and how to compute $\beta_{k+1}$ affects the performance of the algorithms.
Each formula has been studied separately.
Although we do not describe all the existing CG methods here, various other types of $\beta_{k+1}$ have also been examined.
An important example is $\beta^{\HZ}_{k+1}$ by Hager and Zhang~\cite{hager2005new}, whose Riemannian version is studied in~\cite{sakai2021sufficient}.

\subsection{Review of and discussion on the existing Riemannian CG methods}
In this subsection, we review the existing R-CG methods for solving \cref{prob:gen} on a Riemannian manifold $\M$,
adding some examples to clarify the issues to be resolved in this paper.

In the R-CG methods, the line search strategy~\eqref{eq:E_update} in Euclidean spaces is generalized as
\begin{equation}
\label{eq:M_update}
x_{k+1} = R_{x_k}(t_k\eta_k),
\end{equation}
where $R \colon T\M \to \M$ is a retraction on $\M$ and the search direction $\eta_k$ is in $T_{x_k} \M$.
This formula indicates that, given $x_k \in \M$, we find the subsequent point $x_{k+1}$ on the curve $\gamma_k(t) := R_{x_k}(t\eta_k)$ with $\gamma_k(0) = R_{x_k}(0) = x_k$ and $\dot \gamma_k(0) = \D R_{x_k}(0)[\eta_k] = \eta_k$.
The update formula~\eqref{eq:M_update} is a standard approach also taken in other Riemannian optimization algorithms.
Step lengths $t_k$ should be chosen to satisfy some conditions such as the Riemannian version of the Wolfe conditions~\eqref{eq:E_armijo} and~\eqref{eq:E_wolfe}, or strong Wolfe conditions~\eqref{eq:E_armijo} and~\eqref{eq:E_swolfe}.
In the remainder of this section, we assume that $\eta_k$ is a descent direction, i.e., $\langle \grad f(x_k), \eta_k\rangle_{x_k} < 0$.
Defining a one-variable function $\phi_k \colon \mathbb{R} \to \mathbb{R}$ as $\phi_k(t) := f(R_{x_k}(t\eta_k))$,
we obtain $\phi'_k(0) = \langle \grad f(x_k), \eta_k\rangle_{x_k} < 0$.
Therefore, conditions~\eqref{eq:E_armijo}--\eqref{eq:E_swolfe} can be rewritten, with $R_x(\eta) := x+\eta \in \mathbb{R}^n$, as
\begin{equation}
\label{eq:phi_armijo}
\phi_k(t_k) \le \phi_k(0) + c_1t_k\phi'_k(0),
\end{equation}
\begin{equation}
\label{eq:phi_wolfe}
\phi'_k(t_k) \ge c_2 \phi'_k(0),
\end{equation}
and
\begin{equation}
\label{eq:phi_swolfe}
|\phi'_k(t_k)| \le c_2 |\phi'_k(0)|,
\end{equation}
respectively.
Then, the Riemannian version of~\eqref{eq:E_armijo}--\eqref{eq:E_swolfe} can be obtained by rewriting~\eqref{eq:phi_armijo}--\eqref{eq:phi_swolfe}, with a general retraction $R$, as
\begin{equation}
\label{eq:M_armijo}
f(R_{x_k}(t_k\eta_k)) \le f(x_k) + c_1 t_k \langle \grad f(x_k), \eta_k\rangle_{x_k},
\end{equation}
\begin{equation}
\label{eq:M_wolfe}
\langle \grad f(R_{x_k}(t_k\eta_k)), \D R_{x_k}(t_k\eta_k)[\eta_k]\rangle_{R_{x_k}(t_k\eta_k)} \ge c_2 \langle \grad f(x_k), \eta_k\rangle_{x_k},
\end{equation}
and
\begin{equation}
\label{eq:M_swolfe}
|\langle \grad f(R_{x_k}(t_k\eta_k)), \D R_{x_k}(t_k\eta_k)[\eta_k]\rangle_{R_{x_k}(t_k\eta_k)}|
 \le c_2 |\langle \grad f(x_k), \eta_k\rangle_{x_k}|,
\end{equation}
respectively, where $c_1$ and $c_2$ are again constants satisfying $0 < c_1 < c_2 < 1$.
We call~\eqref{eq:M_armijo} the (Riemannian) Armijo condition, \eqref{eq:M_armijo} and~\eqref{eq:M_wolfe} the (Riemannian) Wolfe conditions, and~\eqref{eq:M_armijo} and~\eqref{eq:M_swolfe} the (Riemannian) strong Wolfe conditions.
Furthermore, we define the (Riemannian) generalized Wolfe conditions as~\eqref{eq:M_armijo} and
\begin{align}
c_2 \langle \grad f(x_k), \eta_k\rangle_{x_k} & \le \langle \grad f(R_{x_k}(t_k\eta_k)), \D R_{x_k}(t_k\eta_k)[\eta_k]\rangle_{R_{x_k}(t_k\eta_k)}\notag\\
& \le - c_3 \langle \grad f(x_k), \eta_k\rangle_{x_k},
\label{eq:M_gwolfe}
\end{align}
where $c_3 \geq 0$ is a constant.
Theoretically, if $c_3 = c_2$, then the generalized Wolfe conditions are equivalent to the strong Wolfe conditions.
However, in some practical cases, $c_3$ can be much larger than $c_2$; thus, it is not so restrictive. If $c_3$ is sufficiently large, the generalized Wolfe conditions are close to the Wolfe conditions.
On the other hand, if $c_3 = 0$, then the generalized Wolfe conditions are stricter than the strong Wolfe conditions.

We proceed to generalizing the computation of the search directions~\eqref{eq:E_direction} in the CG methods to the Riemannian case.
The search direction at the initial point $x_0 \in \M$ is naturally determined, i.e., $\eta_0 = -\grad f(x_0)$.
At the right-hand side of~\eqref{eq:E_direction}, $-\nabla f(x_{k+1}) \in \mathbb{R}^n$ is generalized to the negative Riemannian gradient $-\grad f(x_{k+1}) \in T_{x_{k+1}} \M$ on $\M$, whereas $\beta_{k+1} \in \mathbb{R}$ and $\eta_{k}\in T_{x_{k}} \M$ on $\M$.
Consequently, $-\grad f(x_{k+1})$ and $\beta_{k+1} \eta_{k}$ belong to the distinct tangent spaces $T_{x_{k+1}} \M$ and $T_{x_k} \M$; therefore, they cannot be added together.
To resolve this issue, several approaches were considered in the literature.

In~\cite{edelman1998geometry,lichnewsky1979une,smith1994optimization}, R-CG methods were discussed in which $\eta_k \in T_{x_k} \M$ is parallel translated to $T_{x_{k+1}}\M$ to compute $\eta_{k+1}$, i.e., $\eta_{k+1}$ is computed as
\begin{equation}
\label{eq:parallel}
\eta_{k+1} = -\grad f(x_{k+1}) + \beta_{k+1} \P^{1 \gets 0}_{\gamma_k}(\eta_k),
\end{equation}
where $\P^{1 \gets 0}_{\gamma_k} \colon T_{x_k} \M \to T_{x_{k+1}} \M$ is the parallel translation along the geodesic $\gamma_k$ connecting $x_k$ and $x_{k+1}$ as $\gamma_k(0) = x_k$ and $\gamma_k(1) = x_{k+1}$.
However, in some cases, the parallel translation is numerically impractical.
For example, no closed form for the parallel translation along the geodesic on the Stiefel manifold is known.

In~\cite{AbsMahSep2008}, the concept of a more general map, called a vector transport, was proposed.
In the following definition, $T\M \oplus T\M := \{(\xi, \eta) \mid \xi, \eta \in T_x \M, \ x \in \M\}$ is the Whitney sum.
\begin{definition}
\label{def:VT}
A map $\mathcal{T} \colon T\M \oplus T\M \to T\M$ is called a vector transport on $\M$ if it satisfies the following conditions:
\begin{enumerate}
\item
There exists a retraction $R$ on $\M$ such that $\mathcal{T}_{\eta}(\xi) \in T_{R_{x}(\eta)} \M$ for all $x \in \M$ and $\xi, \eta \in T_x \M$.
\item
For any $x \in \M$ and $\xi \in T_x \M$, $\mathcal{T}_{0_x}(\xi) = \xi$ holds, where $0_x$ is the zero vector in $T_x \M$, i.e., $\mathcal{T}_{0_x}$ is the identity map.
\item
For any $a, b \in \mathbb{R}$, $x \in \M$, and $\xi, \eta, \zeta \in T_x \M$, $\mathcal{T}_{\eta}(a\xi + b\zeta) = a\mathcal{T}_{\eta}(\xi) + b\mathcal{T}_{\eta}(\xi)$ holds, i.e., $\mathcal{T}_{\eta}$ is a linear map from $T_x \M$ to $T_{R_{x}(\eta)} \M$.
\end{enumerate}
\end{definition}
Note that a map $\mathcal{T}$ defined by $\mathcal{T}_{\eta}(\xi) := \P^{1 \gets 0}_{\gamma_{x,\eta}}(\xi)$ is a vector transport, where $\P^{1 \gets 0}_{\gamma_{x,\eta}}$ is the parallel translation along the geodesic $\gamma_{x,\eta}(t) := \Exp_x(t\eta)$ connecting $\gamma_{x,\eta}(0) = x$ and $\gamma_{x,\eta}(1) = \Exp_x(\eta)$ with the exponential map $\Exp$ as a retraction.

Using a general vector transport $\mathcal{T}$ on $\M$, the formula~\eqref{eq:parallel} is generalized to
\begin{equation}
\label{eq:vt}
\eta_{k+1} = -\grad f(x_{k+1}) + \beta_{k+1} \mathcal{T}_{t_k\eta_k}(\eta_k).
\end{equation}
Note that the right-hand side is well-defined because the first condition in \cref{def:VT} ensures $\mathcal{T}_{t_k\eta_k}(\eta_k) \in T_{R_{x_k}(t_k\eta_k)}\M = T_{x_{k+1}}\M$.
The formula~\eqref{eq:vt} is more general than~\eqref{eq:parallel} since~\eqref{eq:vt} includes~\eqref{eq:parallel} as a special case.
However, careful consideration is required to make the resultant R-CG method work appropriately.
A specific vector transport and its modified version (called a scaled vector transport) have been studied to be utilized in R-CG methods as follows.

In~\cite{ring2012optimization}, Ring and Wirth analyzed the R-CG method~\eqref{eq:M_update} and~\eqref{eq:vt} with a specific type of $\beta_{k+1}$ defined by $\beta_{k+1} = \|{\grad f(x_{k+1})}\|_{x_{k+1}}^2 / \|{\grad f(x_{k})}\|_{x_{k}}^2$, which is a natural generalization of $\beta^{\FR}_{k+1}$ in~\eqref{eq:E_beta}.
They proved the global convergence property of this type of R-CG method with the differentiated retraction $\mathcal{T}^R$ as a vector transport $\mathcal{T}$ in \eqref{eq:vt}, i.e.,
\begin{equation}
\label{eq:TR}
\eta_{k+1} = -\grad f(x_{k+1}) + \beta_{k+1} \mathcal{T}^R_{t_k\eta_k}(\eta_k)
\end{equation}
with
\begin{equation}
\label{eq:defTR}
\mathcal{T}^R_{\eta}(\xi) := \D R_x(\eta)[\xi], \qquad \xi, \, \eta \in T_x \M, \quad x \in \M,
\end{equation}
assuming the inequality
\begin{equation}
\label{eq:RWineq}
\|\mathcal{T}^R_{t_k\eta_k}(\eta_k)\|_{x_{k+1}} \le \|\eta_k\|_{x_k}, \qquad k \ge 0.
\end{equation}
However, this inequality does not necessarily hold, even in very natural situations as shown by the following example.
\begin{example}
We consider the following QR-based retraction $R$ on the Stiefel manifold $\St(p,n)$ with $p \le n$:
\begin{equation}
\label{eq:QR}
R_X(\eta) = \qf(X+\eta), \qquad \eta \in T_X \St(p,n), \quad X \in \St (p,n),
\end{equation}
where $T_X \St(p,n) = \{\xi \in \mathbb{R}^{n \times p} \mid X^T \xi + \xi^T X = 0\}$ and $\qf(\cdot)$ returns the Q-factor of the QR decomposition of the full-rank matrix in parentheses, i.e., if $A \in \mathbb{R}^{n \times p}$ is of full-rank\footnote{In~\eqref{eq:QR}, $X+\eta$ is always of full-rank because $\Rank(X+\eta) = \Rank((X+\eta)^T(X+\eta)) = \Rank(I_p + \eta^T \eta) = p$. Note that $X^T \eta + \eta^T X = 0$ since $\eta$ belongs to $T_X \St(p,n)$. Furthemore, $I_p + \eta^T \eta$ is positive definite because $z^T (I_p+\eta^T \eta)z = \|z\|_2^2 + \|\eta z\|_2^2 > 0$ for any nonzero vector $z \in \mathbb{R}^p$. Therefore, it is invertible.} and is decomposed as $A = QR$ with $Q \in \St(p,n)$ and $R$ being an upper triangular matrix with positive diagonal elements (such a decomposition is shown to be unique), then $\qf(A) = Q$.
For this retraction $R$, the differentiated retraction $\mathscr{T}^R$ is computed as~\cite{AbsMahSep2008}
\begin{equation}
\label{eq:VT_St}
\mathcal{T}^R_{\eta}(\xi) := \D R_X(\eta)[\xi] = X_+\rho_{\skew}(X_+^T\xi R_+^{-1}) + (I_n - X_+X_+^T)\xi R_+^{-1} \end{equation}
for $X \in \St(p,n)$ and $\xi, \eta \in T_X \St(p,n)$,
where $X+\eta = X_+R_+$ is the QR decomposition of $X+\eta$ (i.e., $X_+ = \qf(X+\eta)$ and $R_+ = X_+^T(X+\eta)$) and $\rho_{\skew}(\cdot)$ returns the skew-symmetric matrix that has the same size and strict lower part as those of the matrix in parentheses.

When $n \geq p \geq 2$, inequality~\eqref{eq:RWineq} does not necessarily hold.
As an example, we consider the Stiefel manifold $\St(p,n)$ with $n = p = 3$, which is reduced to the orthogonal group $\mathcal{O}(p) = \mathcal{O}(3)$, as a Riemannian submanifold of the Euclidean space $\mathbb{R}^{p \times p}$, i.e., the Riemannian metric on $\St(p,n)$ is defined as $\langle \xi, \eta\rangle_X = \tr(\xi^T \eta)$ for $X \in \St(p,n)$ and $\xi, \eta \in T_X\St(p,n)$.
Assume that $X_k = I_3 \in \mathcal{O}(3)$, $\eta_k = \begin{pmatrix}0 & -1 & -1 \\ 1 & 0 & -1\\ 1 & 1 & 0\end{pmatrix} \in T_{X_k} \mathcal{O}(3) = T_{I_3}\mathcal{O}(3) = \{Y \in \mathbb{R}^{3 \times 3} \mid Y + Y^T = 0\}$, and $t_k = 0.1$.
Then, from~\eqref{eq:VT_St}, a tedious calculation yields $\|\mathcal{T}^R_{t_k\eta_k}(\eta_k)\|_{R_{X_k}(t_k\eta_k)} \approx 2.47 > \sqrt{6} = \|\eta_k\|_{X_k}$,
which violates~\eqref{eq:RWineq}.\footnote{An exact calculation shows that $\|\mathcal{T}^R_{t_k\eta_k}(\eta_k)\|_{R_{X_k}(t_k\eta_k)} = 200\sqrt{42849907}/530553$.}
\end{example}

While inequality~\eqref{eq:RWineq} is important for the global convergence property, the linearity of a vector transport is not crucial in R-CG methods.
Based on this important observation, Sato and Iwai~\cite{sato2015new} proposed the notion of a scaled vector transport $\mathcal{T}^{(0)}$ associated with a vector transport $\mathcal{T}$ defined as
$\mathcal{T}^{(0)}_{\eta}(\xi) := (\|\xi\|_{x}/\|\mathcal{T}_{\eta}(\xi)\|_{R_x(\eta)})\mathcal{T}_{\eta}(\xi)$ for $x \in \M$ and $\xi, \eta \in T_x \M$.
They also proposed a strategy where, in~\eqref{eq:TR}, the scaled vector transport $\mathcal{T}^{(0)}$ associated with the differentiated retraction $\mathcal{T}^R$ is used instead of $\mathcal{T}^R$ only when inequality~\eqref{eq:RWineq} is violated, and $\mathcal{T}^R$ itself without scaling is used if otherwise.
This approach can be regarded as
\begin{equation}
\label{eq:RCG_direction_scaled}
\eta_{k+1} = -\grad f(x_{k+1}) + \beta_{k+1} s_k \mathcal{T}^R_{t_k\eta_k}(\eta_k)
\end{equation}
with the scaling parameter $s_k := \min\{1, {\|\eta_k\|_{x_k}}/{\|\mathcal{T}^R_{t_k\eta_k}(\eta_k)\|_{R_{x_k}(t_k\eta_k)}}\} > 0$, where $t_k\eta_k \neq 0$.
Note that $\mathcal{T}^{(0)}$ is not a vector transport because $\mathcal{T}^{(0)}_{\eta} \colon T_x \M \to T_{R_{x}(\eta)}\M$ for $x \in \M$ and $\eta \in T_x \M$ is not a linear map.
Considering the same framework, Sato~\cite{sato2016dai}, Sakai and Iiduka~\cite{sakai2020hybrid}, and Sakai and Iiduka~\cite{sakai2021sufficient} analyzed the Dai--Yuan-type, some hybrid $\beta_{k+1}$-based, and Hager--Zhang-type of R-CG methods, respectively.

Recently, Zhu and Sato~\cite{zhu2020riemannian} proposed a completely different approach from~\eqref{eq:vt} or~\eqref{eq:TR}.
Their algorithm uses an additional retraction $R^{\bw}$, where ``\bw" represents ``backward" and $R^{\bw}$ can be the same as or different from $R$ in~\eqref{eq:M_update}.
Specifically, given $\eta_k \in T_{x_k} \M$ and $x_{k+1} \in \M$, the search direction $\eta_{k+1}$ at $x_{k+1}$ is computed as
\begin{equation}
\label{eq:RCG_direction_invret}
\eta_{k+1} = -\grad f(x_{k+1}) - \beta_{k+1} s_k t_k^{-1}(R_{x_{k+1}}^{\bw})^{-1}(x_k)
\end{equation}
with the scaling parameter $s_k := \min\big\{1, \|\eta_k\|_{x_k}\big/\|t_k^{-1}\big(R_{x_{k+1}}^{{\textrm{bw}}}\big)^{-1}(x_k)\|_{x_{k+1}}\big\}$.
This indicates that the quantity $-t_k^{-1}(R^{\bw}_{x_{k+1}})^{-1}(x_k)$ is used in~\eqref{eq:RCG_direction_invret} instead of $\mathcal{T}^R_{t_k\eta_k}(\eta_k)$ in~\eqref{eq:RCG_direction_scaled}.
In~\cite{zhu2020riemannian}, the FR- and DY-types of R-CG methods with inverse retraction are analyzed.
Furthermore, the inverse retraction is easily computed in some specific cases (e.g., when $R^{\bw}$ is the orthographic retraction).
Another example of inverse retraction $(R^{\bw})^{-1}$ on the manifold of symmetric positive definite matrices, without the necessity of knowing the explicit expression of $R^{\bw}$, is found in~\cite{goto2021approximated}.
These computational advantages make the R-CG methods with inverse retraction practical.
Formula~\eqref{eq:RCG_direction_invret} is a vector transport-free approach, which demonstrates that other approaches can also be possible for the R-CG methods, leading to the idea of the general framework proposed in the subsequent section.

\section{New general framework of Riemannian CG methods}
\label{sec:3}
In this section, we propose a new framework of R-CG methods, which contains all the existing R-CG methods as special cases.
Furthermore, we generalize standard formulas for $\beta_{k+1}$ in~\eqref{eq:E_beta} to the Riemannian case.
Some conditions for step lengths in the proposed framework are also introduced.

\subsection{Algorithm}
We propose a new general framework of R-CG methods in which we use a general map $\mathscr{T}^{(k)} \colon T_{x_k} \M \to T_{x_{k+1}} \M$ to transport $\eta_k \in T_{x_k}\M$ to $T_{x_{k+1}} \M$, i.e., the search direction $\eta_{k+1}$ is computed as
\begin{equation}
    \label{eq:RCG_direction}
\eta_{k+1} = -\grad f(x_{k+1}) + \beta_{k+1} s_k \T^{(k)}(\eta_k),
\end{equation}
where $s_k$ is a scaling parameter satisfying
\begin{equation}
\label{eq:sk}
    0 < s_k \le \min\left\{1, \dfrac{\|\eta_k\|_{x_k}}{ \|\T^{(k)}(\eta_k)\|_{x_{k+1}}}\right\}.
\end{equation}
We summarize the proposed framework of the R-CG methods as \cref{alg:RCGgeneral}.
\begin{algorithm}[H]
\caption{General framework of the Riemannian conjugate gradient methods for \cref{prob:gen} on Riemannian manifold $\M$ with retraction $R$}
\label{alg:RCGgeneral}
\begin{algorithmic}[1]
\STATE Choose an initial point $x_0\in \M$ and set $\eta_0:=-\grad f(x_0)$ and $k := 0$.
\WHILE{$\grad f(x_k)\neq 0$}
\STATE Compute a step length $t_k > 0$ and $x_{k+1}:=R_{x_k}(t_k\eta_k)$.
\STATE Compute $\beta_{k+1} \in \mathbb{R}$.
\STATE
Compute a search direction as $\eta_{k+1}:=-\grad f(x_{k+1})+\beta_{k+1}{s_k\mathscr{T}^{(k)}\left(\eta_k\right)}$ with a map $\T^{(k)} \colon T_{x_k} \M \to T_{x_{k+1}} \M$ and a scaling parameter $s_k \in \mathbb{R}$ satisfying $0 < s_k \le \min\left\{1, \|\eta_k\|_{x_k}/ \|\T^{(k)}(\eta_k)\|_{x_{k+1}}\right\}$.
\STATE $k:=k+1$.
\ENDWHILE
\end{algorithmic}
\end{algorithm}

We need to clarify what conditions $\mathscr{T}^{(k)}$ should satisfy, how to compute $\beta_k$, and how step lengths $t_k$ should be chosen.
We address these in the subsequent subsections.

\subsection{Map \boldmath$\mathscr{T}^{(k)}$ and scaling parameter $s_k$}
The map $\mathscr{T}^{(k)}$ in \cref{alg:RCGgeneral} can be any map such that it appropriately transports $\eta_k \in T_{x_k}\M$ to $T_{x_{k+1}}\M$.
Several conditions used in convergence analyses in \cref{sec:5} are discussed at the end of this subsection. 
An important feature of \cref{alg:RCGgeneral} is that we do not necessarily require $\mathscr{T}^{(k)}$ to be based on a vector transport.
Furthermore, $\mathscr{T}^{(k)}$ is not necessarily a linear map.
Therefore, the R-CG method with inverse retraction introduced in~\eqref{eq:RCG_direction_invret} is also contained in this framework as specifically explained in \cref{ex:T}.

Furthermore, any inequality corresponding to \eqref{eq:RWineq} is not required in terms of $\T^{(k)}$.
Instead, the scaling parameter $s_k \in (0, \min\{1, \|\eta_k\|_{x_k} / \|\T^{(k)}(\eta_k)\|_{x_{k+1}}\}]$ plays a role to ensure a similar inequality $\|s_k\T^{(k)}(\eta_k)\|_{x_{k+1}} \le \|\eta_k\|_{x_k}$ for all $k \ge 0$.

\begin{example}
\label{ex:T}
In \cref{alg:RCGgeneral}, we know several choices of $\mathscr{T}^{(k)}$ since this algorithm includes all the R-CG methods introduced in \cref{sec:2}.
If we set $s_k := 1$ and $\mathscr{T}^{(k)}(\eta_k) := \P_{\gamma_k}^{1 \gets 0}(\eta_k)$ in \cref{alg:RCGgeneral} with parallel translation $\P_{\gamma_k}$ along the geodesic $\gamma_k$ connecting $x_k$ and $x_{k+1}$, then~\eqref{eq:RCG_direction} reduces to~\eqref{eq:parallel}.
Here, we can take $s_k = 1$ because the parallel translation is isometric, i.e., we have $\|\P_{\gamma_k}^{1\gets 0}(\eta_k)\|_{x_{k+1}} = \|\eta_k\|_{x_k}$.
If we set $s_k := \min\{1, {\|\eta_k\|_{x_k}}/{\|\mathcal{T}^R_{t_k\eta_k}(\eta_k)\|_{x_{k+1}}}\}$ and $\mathscr{T}^{(k)}(\eta_k) := \mathcal{T}^R_{t_k\eta_k}(\eta_k)$ with the differentiated retraction $\mathcal{T}^R$, then \eqref{eq:RCG_direction} reduces to~\eqref{eq:RCG_direction_scaled}.
Furthermore, if we set $s_k := \min\big\{1, \|\eta_k\|_{x_k}\big/\|t_k^{-1}\big(R_{x_{k+1}}^{{\textrm{bw}}}\big)^{-1}(x_k)\|_{x_{k+1}}\big\}$ and $\mathscr{T}^{(k)}(\eta_k) := -t_k^{-1}(R^{\bw}_{R_{x_k}(t_k\eta_k)})^{-1}(x_k)$,
then \eqref{eq:RCG_direction} reduces to~\eqref{eq:RCG_direction_invret}.
For these three examples, the chosen $s_k$ can be uniformly written as $s_k = \min\{1, \|\eta_k\|_{x_k} / \|\T^{(k)}(\eta_k)\|_{x_{k+1}}\}$.

In~\cite{sato2021riemannian}, Sato proposed a prototype of \cref{alg:RCGgeneral}, where $s_k$ is any real value satisfying~\eqref{eq:sk} and $\T^{(k)}(\eta_k) := \mathcal{T}^R_{t_k\eta_k}(\eta_k)$.
However, \cref{alg:RCGgeneral} is more general than the algorithm in~\cite{sato2021riemannian} because $\T^{(k)}$ is not restricted to the differentiated retraction-based map.
\end{example}

\cref{alg:RCGgeneral} covers a wider class of R-CG methods than the existing ones partly because we do not limit the scaling parameter $s_k > 0$ to be a specific form such as $s_k = \min\{1, \|\eta_k\|_{x_k} / \|\T^{(k)}(\eta_k)\|_{x_{k+1}}\}$.
Moreover, it contains R-CG methods with $\T^{(k)}$ that have not been discussed in the literature.
An example is to use a vector transport based on the orthogonal projection to the tangent spaces, which we detail for the sphere case in \cref{ex:P}.

Subsequently, we introduce two conditions~\eqref{eq:TkDR1} and~\eqref{eq:TkDR2} on $\T^{(k)}$, which are used in global convergence analyses of R-CG methods in \cref{sec:5}.
\begin{assumption}
\label{assump:Tk}
For maps $\T^{(k)}$ in \cref{alg:RCGgeneral}, there exist $C \geq 0$ and index sets $K_1 \subset \mathbb{N}$ and $K_2 = \mathbb{N} - K_1$ such that
\begin{equation}
\label{eq:TkDR1}
    \|\mathscr{T}^{(k)}(\eta_k) - \D R_{x_k}(t_k\eta_k)[\eta_k]\|_{x_{k+1}} \le Ct_k\|\eta_k\|_{x_k}^2, \qquad k \in K_1
\end{equation}
and
\begin{equation}
\label{eq:TkDR2}
    \|\mathscr{T}^{(k)}(\eta_k) - \D R_{x_k}(t_k\eta_k)[\eta_k]\|_{x_{k+1}} \le C(t_k+t_k^2)\|\eta_k\|_{x_k}^2,\qquad k \in K_2
\end{equation}
hold, where $\mathbb{N}$ is the set of nonnegative integers.
\end{assumption}

\begin{remark}
In fact, for any $k \ge 0$, the inequality in~\eqref{eq:TkDR2} is weaker than that in~\eqref{eq:TkDR1}.
Therefore, \cref{assump:Tk} is equivalent to the condition that there exists $C \geq 0$ such that $\|\mathscr{T}^{(k)}(\eta_k) - \D R_{x_k}(t_k\eta_k)[\eta_k]\|_{x_{k+1}} \le C(t_k+t_k^2)\|\eta_k\|_{x_k}^2$ holds for all $k \ge 0$.
We write \cref{assump:Tk} with a stricter inequality~\eqref{eq:TkDR1} because we can weaken the condition imposed on the step length $t_k$ when $\T^{(k)}$ satisfies~\eqref{eq:TkDR1} in Zoutendijk's theorem (\cref{thm:zouten}).
\end{remark}

\cref{assump:Tk} requires that $\T^{(k)}$ is not so far from the differentiated retraction $\mathcal{T}^R$.
The R-CG methods with the differentiated retraction trivially satisfies the assumption, especially~\eqref{eq:TkDR1} with $C = 0$ and $K_1 = \mathbb{N}$.
Furthermore, in~\cite{zhu2020riemannian}, it is discussed that this assumption, especially~\eqref{eq:TkDR2} with $K_2 = \mathbb{N}$, is also natural for the R-CG methods with inverse retraction.

The proposed R-CG methods (\cref{alg:RCGgeneral}) will be analyzed in \cref{sec:5} with \cref{assump:Tk}.
We realize that R-CG methods with some vector transports $\mathcal{T}$ that have not been analyzed yet also have convergence properties if $\T^{(k)}$ defined by $\mathcal{T}$ satisfies \cref{assump:Tk}.
Examples of such vector transports are shown as follows.

\begin{example}
\label{ex:P}
Consider the sphere $S^{n-1} := \{x \in \mathbb{R}^n \mid x^T x = 1\}$ with a retraction $R$ and Riemannian metric $\langle \cdot, \cdot\rangle$ defined as $R_x(\eta) = (x+\eta) / \|x+\eta\|_2$ and $\langle \xi, \eta\rangle_x = \xi^T \eta$ for $x \in S^{n-1}$ and $\xi, \eta \in T_x S^{n-1}$, respectively.
Thus, we regard $S^{n-1}$ as a Riemannian submanifold of $\mathbb{R}^n$.
Generally, for a Riemannian submanifold, the orthogonal projections to the tangent spaces define a vector transport~\cite{AbsMahSep2008}.
We can define such a vector transport $\mathcal{T}^P$ on $S^{n-1}$ based on the orthogonal projection as
\begin{equation}
    \mathcal{T}^P_{\eta}(\xi) := P_{R_x(\eta)}(\xi) = (I_n - R_x(\eta)R_x(\eta)^T)\xi = \bigg(I_n - \frac{(x+\eta)(x+\eta)^T}{\|x+\eta\|_2^2}\bigg)\xi,
\end{equation}
for $x \in S^{n-1}$ and $\eta, \xi \in T_x S^{n-1}$, where $P_y(d) = (I_n - yy^T)d$ is the orthogonal projection of $d \in \mathbb{R}^n$ to the tangent space $T_y S^{n-1} = \{z \in \mathbb{R}^n \mid y^T z = 0\}$ at $y \in S^{n-1}$.
This vector transport is typically used in R-CG methods practically (e.g., implemented in Manopt~\cite{boumal2014manopt}); nevertheless, to the author's knowledge, it has not been theoretically discussed in detail in terms of the convergence properties of the R-CG methods.
Therefore, it is meaningful to verify that $\T^{(k)}$ defined by $\mathcal{T}^P$ satisfies \cref{assump:Tk}.

The differentiated retraction $\mathcal{T}^R$ is written as
\begin{equation*}
    \mathcal{T}^R_{\eta}(\xi) := \D R_x(\eta)[\xi] = \frac{1}{\|x+\eta\|_2}\bigg(I_n - \frac{(x+\eta)(x+\eta)^T}{\|x+\eta\|_2^2}\bigg)\xi = \frac{1}{\|x+\eta\|_2}P_{R_x(\eta)}(\xi).
\end{equation*}
Therefore, we can evaluate the difference of the two vector transports, with $\eta = t_k\eta_k$ and $\xi = \eta_k \in T_{x_k} S^{n-1}$, as
\begin{equation}
\label{eq:ex_sphere1}
    \|\mathcal{T}^P_{t_k\eta_k}(\eta_k) - \mathcal{T}^R_{t_k\eta_k}(\eta_k)\|_{R_{x_k}(t_k\eta_k)} = \bigg|1-\frac{1}{\|x_k+t_k\eta_k\|_2}\bigg| \|P_{R_{x_k}(t_k\eta_k)}(\eta_k)\|_{R_{x_k}(t_k\eta_k)}.\\
\end{equation}
Considering that $x_k^T x_k = 1$ and $x_k^T \eta_k = 0$, we obtain $\|x_k+t_k\eta_k\|_2
= \sqrt{1+t_k^2\|\eta_k\|_2^2} > 0$ and
\begin{equation}
\label{eq:ex_sphere2}
    \|P_{R_{x_k}(t_k\eta_k)}(\eta_k)\|_{R_{x_k}(t_k\eta_k)}^2 = \bigg\|\eta_k - \frac{t_k\|\eta_k\|_2^2}{1+t_k^2\|\eta_k\|_2^2}(x_k+t_k\eta_k)\bigg\|_2^2
    =\frac{\|\eta_k\|_2^2}{1+t_k^2\|\eta_k\|_2^2}.
\end{equation}
Here, we define the auxiliary function $h(t) := (1 - 1/\sqrt{1+t^2})/(t\sqrt{1+t^2})$ on $t > 0$.
We can find an upper bound of $h$ as
\begin{equation}
\label{eq:ex_sphere3}
    h(t) = \frac{\sqrt{1+t^2}-1}{t(1+t^2)} = \frac{t}{(1+t^2)(\sqrt{1+t^2}+1)} < \bigg(\bigg(\frac{1}{t}+t\bigg)\cdot 2\bigg)^{-1}
    \leq \frac{1}{4},
\end{equation}
where we used $\sqrt{1+t^2} > 1$ and $t+1/t \geq 2$ from the arithmetic--geometric mean inequality.\footnote{A more tedious calculation provides the strictest upper bound as $h(t) \le 4\sqrt{2/(349+85\sqrt{17})} \approx 0.2139$, which is the maximum value of $h$.}
If $t_k\|\eta_k\|_2 > 0$, combining~\eqref{eq:ex_sphere1}--\eqref{eq:ex_sphere3}, we obtain
\begin{align*}
    \|\mathcal{T}^P_{t_k\eta_k}(\eta_k) - \mathcal{T}^R_{t_k\eta_k}(\eta_k)\|_{R_{x_k}(t_k\eta_k)}
    &= \bigg(1-\frac{1}{\sqrt{1+(t_k\|\eta_k\|_2)^2}}\bigg)\frac{\|\eta_k\|_2}{\sqrt{1+(t_k\|\eta_k\|_2)^2}} \\
    &= h(t_k\|\eta_k\|_2)\cdot t_k\|\eta_k\|_2^2 \le \frac{1}{4}t_k\|\eta_k\|_2^2
    = \frac{1}{4}t_k\|\eta_k\|_{x_k}^2,
\end{align*}
whereas if $t_k \eta_k = 0$, we have $\|\mathcal{T}^P_0(\eta_k) - \mathcal{T}^R_0(\eta_k)\|_{x_k} = \|\eta_k-\eta_k\|_{x_k} = 0$.
Therefore, for the sphere $S^{n-1}$, $\T^{(k)}(\eta_k) := \mathcal{T}^P_{t_k\eta_k}(\eta_k)$ satisfies the condition in \cref{assump:Tk} with $C = 1/4$ and $K_1 = \mathbb{N}$.
\end{example}

\begin{example}
\label{ex:P_Grass}
Consider the Grassmann manifold $\Grass(p,n) \simeq \St(p,n) / \mathcal{O}(p)$ with $p \le n$.
For $X \in \Grass(p,n)$, let $\bar{X} \in \St(p,n)$ denote a representative of $X$ and let $\bar{\eta}$ denote the horizontal lift of $\eta \in T_X\Grass(p,n)$ at $\bar{X}$.
We endow $\Grass(p,n)$ with the Riemannian metric $\langle \xi, \eta\rangle_X := \tr(\bar{\xi}^T\bar{\eta})$ for $\xi, \eta \in T_X \Grass(p,n)$ and the retraction $R$ based on the polar decomposition defined through $\overline{R_X(\eta)} := (\bar{X} + \bar{\eta})(I_p + \bar{\eta}^T\bar{\eta})^{-1/2}$.
For this retraction $R$, similarly to the previous example, the projection-based vector transport $\mathcal{T}^P$ and differentiated retraction $\mathcal{T}^R$ are written as~\cite{AbsMahSep2008,huang2013optimization}
\begin{equation*}
    \overline{\mathcal{T}^P_{\eta}(\xi)} = (I_n - YY^T)\bar{\xi},\qquad
    \overline{\mathcal{T}^R_{\eta}(\xi)} = (I_n -YY^T)\bar{\xi}(Y^T(\bar{X}+\bar{\eta}))^{-1},
\end{equation*}
where $Y = \overline{R_X(\eta)}$.
We can generalize the discussion in \cref{ex:P} to this case.

Here, omitting subscript $k$, putting $\eta^T \eta =: Q\diag(\lambda_1, \lambda_2, \dots, \lambda_p)Q^T$ with $Q \in \mathcal{O}(p)$ and $Z := \overline{R_X(t\eta)}$, and using the auxiliary function $h$ in \cref{ex:P}, we obtain
\begin{align*}
    \|\mathcal{T}^P_{t\eta}(\eta) - \mathcal{T}^R_{t\eta}(\eta)\|_{R_{X}(t\eta)}^2
    &= \|(I_n-ZZ^T)\bar{\eta}(I_p - (Z^T(\bar{X}+t
    \bar{\eta}))^{-1}\|_F^2\\
    &= \tr(\bar{\eta}^T\bar{\eta}(I_p + t^2 \bar{\eta}^T \bar{\eta})^{-1}(I_p - (I_p + t^2 \bar{\eta}^T \bar{\eta})^{-1/2})^2)\\
    &= \sum_{i=1}^p \bigg(1 - \frac{1}{\sqrt{1 + t^2 \lambda_i}}\bigg)^2 \frac{\lambda_i}{1 + t^2 \lambda_i}\\
    &= \sum_{i=1}^p h\big(t\sqrt{\lambda_i}\big)^2t^2\lambda_i^2 \le \bigg(\frac{1}{4}\bigg)^2t^2\tr((\bar{\eta}^T \bar{\eta})^2) \le \bigg(\frac{1}{4}t\|\bar{\eta}\|_{F}^2\bigg)^2,
\end{align*}
implying that $ \|\mathcal{T}^P_{t\eta}(\eta) - \mathcal{T}^R_{t\eta}(\eta)\|_{R_{X}(t\eta)} \le t\|\eta\|_X^2/4$.
Therefore, $\T^{(k)}(\eta_k) := \mathcal{T}^P_{t_k\eta_k}(\eta_k)$ satisfies the condition in \cref{assump:Tk} with $C = 1/4$ and $K_1 = \mathbb{N}$.
\end{example}

\subsection{Computation of \boldmath$\beta_{k+1}$ in R-CG methods}

In \cref{alg:RCGgeneral}, the computation of $\beta_{k+1}$ in each iteration is crucial, and it affects the performance of the R-CG methods.
Some of the six types of $\beta_{k+1}$ in Euclidean CG methods shown in~\eqref{eq:E_beta} have been generalized to the Riemannian case in each R-CG algorithm with a specific choice of $\T^{(k)}$ in the literature.
For example, Smith~\cite{smith1994optimization} and Edelman et al.~\cite{edelman1998geometry} proposed the generalization of $\beta_{k+1}^{\LS}$ and $\beta_{k+1}^{\PRP}$ with parallel translation along the geodesic, respectively.
Ring and Wirth~\cite{ring2012optimization} and Sato and Iwai~\cite{sato2015new} analyzed the generalization of $\beta_{k+1}^{\FR}$ with the (scaled) vector transport defined through the differentiated retraction.
Sato~\cite{sato2016dai} proposed and analyzed the generalization of $\beta_{k+1}^{\DY}$ in the same framework as in~\cite{sato2015new}.
Sakai and Iiduka~\cite{sakai2020hybrid} recently discussed a class of $\beta_{k+1}$ containing a combination of the generalizations of $\beta_{k+1}^{\DY}$ and $\beta_{k+1}^{\HS}$ with the same (scaled) vector transports.
Furthermore, Zhu and Sato~\cite{zhu2020riemannian} proposed and analyzed the generalizations of $\beta_{k+1}^{\FR}$ and $\beta_{k+1}^{\DY}$ with inverse retraction.

Here, we propose the Riemannian versions of the six types of $\beta_{k+1}$, generalized from~\eqref{eq:E_beta} in the Euclidean CG methods.
We put $g_k := \grad f(x_k) \in T_{x_k} \M$.
From~\eqref{eq:E_beta}, we observe that $\beta_{k+1}^{\FR}$, $\beta_{k+1}^{\DY}$, and
$\beta_{k+1}^{\CD}$ have a common numerator $\|g_{k+1}\|_2^2$ in the Euclidean case.
This quantity can be easily and naturally generalized to the Riemannian case as $\|g_{k+1}\|_{x_{k+1}}^2$,
i.e., the Euclidean gradient is replaced with the Riemannian gradient and the Euclidean norm is generalized to the norm in $T_{x_{k+1}}\M$ defined by the Riemannian metric.
On the other hand, $\beta_{k+1}^{\PRP}$, $\beta_{k+1}^{\HS}$, and $\beta_{k+1}^{\LS}$ have the common numerator $g_{k+1}^T y_{k+1}$, where $y_{k+1} := g_{k+1} - g_{k}$.
This is generalized to the Riemannian case on $\M$ by transporting $g_{k} \in T_{x_{k}} \M$ to $T_{x_{k+1}} \M$ using some map $\mathscr{S}^{(k)} \colon T_{x_k}\M \to T_{x_{k+1}} \M$ (which is possibly equal to $\T^{(k)}$) and some scaling parameter $l_k > 0$, and taking the inner product $\langle g_{k+1}, g_{k+1} - l_k\mathscr{S}^{(k)}(g_k)\rangle_{x_k}$ in $T_{x_k} \M$.

Furthermore, $\beta_{k+1}^{\FR}$ and $\beta_{k+1}^{\PRP}$ have the common denominator $\|g_{k}\|_2^2$, which is generalized to $\|g_{k}\|_{x_{k}}^2$, and $\beta_{k+1}^{\CD}$ and $\beta_{k+1}^{\LS}$ have the common denominator $-g_{k}^T \eta_{k}$, which is generalized to $-\langle g_{k}, \eta_{k}\rangle_{x_{k}}$.
Finally, $\beta_{k+1}^{\DY}$ and $\beta_{k+1}^{\HS}$ have the common denominator $y_{k+1}^T\eta_{k}$.
In~\cite{sato2016dai}, where $s_k := \min\{1, {\|\eta_k\|_{x_k}}/{\|\mathcal{T}^R_{t_k\eta_k}(\eta_k)\|_{x_{k+1}}}\}$ and $\T^{(k)}(\eta_k) := \mathcal{T}^R_{t_k\eta_k}(\eta_k)$, the quantity $y_{k+1}^T \eta_{k} = g_{k+1}^T\eta_{k} - g_k^T \eta_{k}$ in the Euclidean case is generalized to the Riemannian case as $\langle g_{k+1}, s_{k}\T^{(k)}(\eta_{k})\rangle_{x_{k+1}} - \langle g_k, \eta_{k}\rangle_{x_{k}}$.
We follow this approach in \cref{alg:RCGgeneral}.

In summary, we obtain the following formulas for the Riemannian version of $\beta_{k+1}$ in~\eqref{eq:RCG_direction}, some of which depend on map $\T^{(k)}$ and $\S^{(k)}$:
\allowdisplaybreaks
\begin{align}
\label{eq:RFR}
\beta_{k+1}^{\RFR} &= \frac{\|{g_{k+1}}\|_{x_{k+1}}^2}{\|{g_k}\|_{x_k}^2},\\
\label{eq:RDY}
\beta_{k+1}^{\RDY} &= \frac{\|{g_{k+1}}\|_{x_{k+1}}^2}{\langle g_{k+1}, s_k\mathscr{T}^{(k)}(\eta_k)\rangle_{x_{k+1}} - \langle g_k, \eta_k\rangle_{x_k}},\\
\label{eq:RCD}
\beta_{k+1}^{\RCD} &= \frac{\|{g_{k+1}}\|_{x_{k+1}}^2}{- \langle g_k, \eta_k\rangle_{x_k}},\\
\label{eq:RPRP}
\beta_{k+1}^{\RPRP} &= \frac{\|{g_{k+1}}\|_{x_{k+1}}^2 - \langle g_{k+1}, l_k\mathscr{S}^{(k)}
(g_k)\rangle_{x_{k+1}}}{\|{g_k}\|_{x_k}^2},\\
\label{eq:RHS}
\beta_{k+1}^{\RHS} &= \frac{\|{g_{k+1}}\|_{x_{k+1}}^2 - \langle g_{k+1}, l_k\mathscr{S}^{(k)}
(g_k)\rangle_{x_{k+1}}}{\langle g_{k+1}, s_k\mathscr{T}^{(k)}(\eta_k)\rangle_{x_{k+1}} - \langle g_k, \eta_k\rangle_{x_k}},\\
\label{eq:RLS}
\beta_{k+1} ^{\RLS} &= \frac{\|{g_{k+1}}\|_{x_{k+1}}^2 - \langle g_{k+1}, l_k\mathscr{S}^{(k)}
(g_k)\rangle_{x_{k+1}}}{- \langle g_k, \eta_k\rangle_{x_k}}.
\end{align}
As explained above, $l_k > 0$ and $\S^{(k)} \colon T_{x_k} \M \to T_{x_{k+1}} \M$ in~\eqref{eq:RPRP}--\eqref{eq:RLS} play similar roles to those of $s_k$ and $\T^{(k)}$, respectively.
However, we do not impose any specific conditions on $l_k$ and $\S^{(k)}$ at this stage.
Practically, it may be desirable that $l_k \S^{(k)}(g_k) \approx g_k$ holds when $t_k \eta_k \approx 0$, indicating when $x_{k+1} \approx x_k$.
The R-CG methods with modified $\beta^{\RPRP}_{k+1}$, $\beta^{\RHS}_{k+1}$, and $\beta^{\RLS}_{k+1}$ will be discussed in detail in \cref{subsec:PRP_HS_LS}.

We can verify that they are all reduced to the corresponding existing $\beta_k$ (if the literature exists) by specifying maps $\mathscr{T}^{(k)}$ and $\mathscr{S}^{(k)}$, such as a vector transport or inverse retraction.
These discussions on generalization of several types of $\beta_{k+1}$ are justified through the convergence analyses in \cref{sec:5}.

\subsection{Step length $t_k$}
In the R-CG methods, the (strong) Wolfe conditions are especially important to guarantee their convergence properties.
Because we introduce a map $\T^{(k)}$ in \cref{alg:RCGgeneral}, we need to slightly modify the conditions.
In this subsection, we assume that the current iteration $x_k \in \M$ and search direction $\eta_k \in T_{x_k} \M$ are given. Further, we assume that $\eta_k$ is a descent direction, i.e., $\langle \grad f(x_k), \eta_k\rangle_{x_k} < 0$.

We revisit conditions \eqref{eq:M_wolfe}--\eqref{eq:M_gwolfe}, which appear in the (strong/generalized) Wolfe conditions.
In these three conditions, the quantity $\D R_{x_k}(t_k\eta_k)[\eta_k]$ is commonly used.
This is written as $\D R_{x_k}(t_k\eta_k)[\eta_k] = \mathcal{T}^R_{t_k\eta_k}[\eta_k]$ for the differentiated retraction $\mathcal{T}^R$ defined as~\eqref{eq:defTR}.
We generalize the (strong/generalized) Wolfe conditions by replacing $\mathcal{T}^R_{t_k\eta_k}(\eta_k)$ with $\T^{(k)}(\eta_k)$.
Specifically, \eqref{eq:M_wolfe}--\eqref{eq:M_gwolfe} are generalized as
\begin{equation}
\label{eq:M_Twolfe}
\langle \grad f(R_{x_k}(t_k\eta_k)), \mathscr{T}^{(k)}(\eta_k)\rangle_{R_{x_k}(t_k\eta_k)} \geq c_2 \langle \grad f(x_k), \eta_k\rangle_{x_k},
\end{equation}
\begin{equation}
\label{eq:M_Tswolfe}
|\langle \grad f(R_{x_k}(t_k\eta_k)), \mathscr{T}^{(k)}(\eta_k)\rangle_{R_{x_k}(t_k\eta_k)}|
 \le c_2 |\langle \grad f(x_k), \eta_k\rangle_{x_k}|,
\end{equation}
and
\begin{align}
c_2 \langle \grad f(x_k), \eta_k\rangle_{x_k} & \le \langle \grad f(R_{x_k}(t_k\eta_k)), \mathscr{T}^{(k)}(\eta_k)\rangle_{R_{x_k}(t_k\eta_k)} \notag\\*
& \le - c_3 \langle \grad f(x_k), \eta_k\rangle_{x_k},
\label{eq:M_Tgwolfe}
\end{align}
respectively.
We define the \emph{$\mathscr{T}^{(k)}$-Wolfe conditions} as~\eqref{eq:M_armijo} and~\eqref{eq:M_Twolfe}, \emph{strong $\mathscr{T}^{(k)}$-Wolfe conditions} as~\eqref{eq:M_armijo} and~\eqref{eq:M_Tswolfe}, and \emph{generalized $\mathscr{T}^{(k)}$-Wolfe conditions} as~\eqref{eq:M_armijo} and~\eqref{eq:M_Tgwolfe}, where $0 < c_1 < c_2 < 1$ and $c_3 \geq 0$.
Note that the scaling parameter $s_k$ in \cref{alg:RCGgeneral} does not appear in these conditions.

Subsequently, we discuss whether $t_k$ satisfying the (strong/generalized) $\T^{(k)}$-Wolfe conditions exists.
It is sufficient to show that $t_k$ satisfying the generalized $\T^{(k)}$-Wolfe conditions~\eqref{eq:M_armijo} and~\eqref{eq:M_Tgwolfe} with $c_3 = 0$ exists because such $t_k$ also satisfies the $\T^{(k)}$-Wolfe conditions~\eqref{eq:M_armijo} and~\eqref{eq:M_Twolfe}, strong $\T^{(k)}$-Wolfe conditions~\eqref{eq:M_armijo} and~\eqref{eq:M_Tswolfe}, and generalized $\T^{(k)}$-Wolfe conditions~\eqref{eq:M_armijo} and~\eqref{eq:M_Tgwolfe} with any $c_3 \ge 0$.

If $\T^{(k)}(\eta_k) := \D R_{x_k}(t_k\eta_k)[\eta_k]$, then the (strong/generalized) $\T^{(k)}$-Wolfe conditions are reduced to the Riemannian (strong/ generalized) Wolfe conditions, i.e., \eqref{eq:M_Twolfe}--\eqref{eq:M_Tgwolfe} are reduced to~\eqref{eq:M_wolfe}--\eqref{eq:M_gwolfe}, respectively.
In particular, the generalized $\T^{(k)}$-Wolfe conditions~\eqref{eq:M_armijo} and~\eqref{eq:M_Tgwolfe} with $c_3 = 0$ in this case are rewritten as $\phi_k(t_k) \le \phi_k(0) + c_1 t_k \phi_k'(0)$ and $c_2 \phi_k'(0) \le \phi_k'(t_k) \le 0$ by defining $\phi_k(t) := f(R_{x_k}(t\eta_k))$.
Then, we can prove that $t_k > 0$ satisfying the two inequalities exists, similar to those in the Euclidean case.
A complete proof for the Riemannian case is found in Proposition 3.5 of~\cite{sato2021riemannian}.
If $\T^{(k)}(\eta_k) := -t_k^{-1}(R^{\bw}_{R_{x_k}(t_k\eta_k)})^{-1}(x_k)$, then the study on R-CG methods with inverse retraction~\cite{zhu2020riemannian} reveals that there exists $t_k$ satisfying the generalized $\T^{(k)}$-Wolfe conditions~\eqref{eq:M_armijo} and~\eqref{eq:M_Tgwolfe} with $c_3 = 0$.

\section{Assumptions and Zoutendijk's theorem}
\label{sec:4}

In this section, we discuss and summarize assumptions required for guaranteeing the global convergence of the R-CG methods.
Although the proposed framework (\cref{alg:RCGgeneral}) is quite general, it is important to clarify the conditions with which the R-CG methods appropriately work.
We have already discussed the conditions for $\T^{(k)}$ and $t_k$ in \cref{alg:RCGgeneral} in \cref{sec:3}.
In \cref{subsec:assumption}, we state the conditions imposed on \cref{prob:gen}.
Furthermore, we extend (the Riemannian version of) Zoutendijk's theorem to a theorem (\cref{thm:zouten}) in the framework of \cref{alg:RCGgeneral}.

\subsection{Assumptions for retraction and objective function}
\label{subsec:assumption}
We assume the following condition on the objective function $f$.
\begin{assumption}
\label{assump:RCG}
The Riemannian manifold $\M$ in \cref{prob:gen} is endowed with a retraction $R \colon T\M \to \M$.
The objective function $f$ in \cref{prob:gen} is of class $C^1$, bounded below on $\M$, i.e., there exists a constant $f_* \in \mathbb{R}$ such that $f(x) \geq f_*$ for all $x \in \M$, and satisfies the following condition:
\begin{align}
&\text{There exists a constant $L > 0$ such that, for all $x \in \M$, $\eta \in T_x \M$ with}\notag\\
&\text{$\|\eta\|_x = 1$, and $t \ge 0$,
it holds $|\D (f \circ R_x)(t\eta)[\eta] - \D (f \circ R_x)(0)[\eta]| \le Lt$.} \label{eq:Lip}
\end{align}
Furthermore, the norm of the gradient of $f$ is upper bounded on the sublevel set $\{x \in \M \mid f(x) \le f(x_0)\}$ for the initial point $x_0$ of \cref{alg:RCGgeneral}.
This implies that there exists $L_g > 0$ such that $\|\grad f(x_k)\|_{x_k} \le L_g$ if $t_k$ in \cref{alg:RCGgeneral} satisfies the Armijo condition~\eqref{eq:M_armijo} because~\eqref{eq:M_armijo} guarantees that $\{f(x_k)\}$ is monotonically nonincreasing.
\end{assumption}

\begin{remark}
Condition~\eqref{eq:Lip} is weaker than the condition that $\grad (f \circ R_x)$ is Lipschitz continuous for all $x \in \M$ with the same Lipschitz constant, i.e., there exists $L > 0$ such that
\begin{equation}
\label{eq:Lip_fR}
    \|\grad (f \circ R_x)(\xi) - \grad (f \circ R_x)(\eta)\|_x \le L \|\xi - \eta\|_x, \qquad \xi, \eta \in T_x \M
\end{equation}
for all $x \in \M$.
Indeed, if \eqref{eq:Lip_fR} holds for all $x \in \M$, we obtain, for any $\eta \in T_x \M$ with $\|\eta\|_x = 1$ and any $t \ge 0$,
\begin{align*}
    |\D (f \circ R_x)(t\eta)[\eta] - \D (f \circ R_x)(0)[\eta]|
    & = |\langle \grad (f \circ R_x)(t\eta) - \grad f(f \circ R_x)(0), \eta\rangle_x|\notag\\
    & \leq \|\grad (f \circ R_x)(t\eta) - \grad f(f \circ R_x)(0)\|_x\|\eta\|_x\notag\\
    & \leq L\|t\eta\|_x \|\eta\|_x = Lt.
\end{align*}
Thus, \eqref{eq:Lip} holds.

Condition~\eqref{eq:Lip} is also closely related to the condition that $f \circ R$ is radially Lipschitz continuously differentiable~\cite{AbsMahSep2008},
i.e., there exist real values $L > 0$ and $\delta > 0$ such that, for all $x \in \M$, $\eta \in T_x \M$ with $\|\eta\|_x = 1$, and $t < \delta$, it holds that
\begin{equation*}
\left|\frac{d}{d\tau}(f \circ R_x)(\tau \eta)|_{\tau = t} - \frac{d}{d\tau} (f \circ R_x)(\tau \eta)|_{\tau = 0}\right| \le L t.
\end{equation*}
Indeed, when $\delta = \infty$, this condition is equivalent to \eqref{eq:Lip}.
\end{remark}

\subsection{Riemannian version of Zoutendijk's theorem with \boldmath$\T^{(k)}$}
In Euclidean optimization, Zoutendijk's theorem plays an important role in analyzing various optimization algorithms (see, e.g., \cite{nocedal2006numerical}).
Its Riemannian version is also discussed in~\cite{ring2012optimization,sato2021riemannian,sato2015new}.
They normally state a property for a sequence generated with step lengths that satisfy the Wolfe conditions.
Here, to analyze the proposed R-CG methods with $\T^{(k)}$, we provide a similar theorem about the sequences generated by step lengths $t_k > 0$, each of which satisfies the $\T^{(k)}$-Wolfe conditions.
Note that the following result is not limited to the case of CG methods.

\begin{theorem}
\label{thm:zouten}
Consider \cref{prob:gen} on a Riemannian manifold $\M$ with a retraction $R$ and suppose \cref{assump:RCG}.
We also assume that $\T^{(k)}$ satisfies \cref{assump:Tk} and $t_k$ satisfies the $\mathscr{T}^{(k)}$-Wolfe conditions~\eqref{eq:M_armijo} and~\eqref{eq:M_Twolfe} for all $k \ge 0$.
If $\langle\grad f(x_k), \eta_k\rangle_{x_k} < 0$ for all $k \ge 0$ and there exists $\mu > 0$ such that $\|\grad f(x_k)\|_{x_k} \le \mu \|\eta_k\|_{x_k}$ for all $k \in K_2$, then we have
\begin{equation}
\label{eq:zouten}
\sum_{k=0}^\infty \frac{\langle \grad f(x_k), \eta_k\rangle_{x_k}^2}{\|\eta_k\|_{x_k}^2} < \infty,
\end{equation}
where $K_2$ is the subset of $\mathbb{N}$ in \cref{assump:Tk}.
\end{theorem}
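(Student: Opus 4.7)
The plan is to adapt the classical Euclidean Zoutendijk argument to the $\T^{(k)}$-Wolfe setting: derive a per-iteration lower bound on $t_k$ proportional to $|\langle \grad f(x_k), \eta_k\rangle_{x_k}|/\|\eta_k\|_{x_k}^2$ from the curvature condition \eqref{eq:M_Twolfe}, then combine with the Armijo inequality \eqref{eq:M_armijo} and the lower boundedness of $f$ by telescoping. The novelty relative to existing Riemannian Zoutendijk theorems is that the curvature bound uses the general $\T^{(k)}(\eta_k)$ rather than $\D R_{x_k}(t_k\eta_k)[\eta_k]$, so the discrepancy between the two has to be absorbed, and \cref{assump:Tk} is precisely tailored for this purpose. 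Write $g_k := \grad f(x_k)$. Since \eqref{eq:M_armijo} keeps $\{x_k\}$ inside the sublevel set $\{f \le f(x_0)\}$, \cref{assump:RCG} furnishes the uniform bound $\|g_k\|_{x_k} \le L_g$ along the iterates.

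From \eqref{eq:M_Twolfe}, subtracting $\langle g_k, \eta_k\rangle_{x_k}$ on both sides yields
\[
\langle g_{k+1}, \T^{(k)}(\eta_k)\rangle_{x_{k+1}} - \langle g_k, \eta_k\rangle_{x_k} \ge (1-c_2)\,|\langle g_k, \eta_k\rangle_{x_k}| > 0.
\]
I would split the left-hand side as
\[
\bigl(\langle g_{k+1}, \D R_{x_k}(t_k\eta_k)[\eta_k]\rangle_{x_{k+1}} - \langle g_k, \eta_k\rangle_{x_k}\bigr) + \bigl\langle g_{k+1},\, \T^{(k)}(\eta_k) - \D R_{x_k}(t_k\eta_k)[\eta_k]\bigr\rangle_{x_{k+1}}.
\]
After rescaling $\eta_k$ to unit norm, condition \eqref{eq:Lip} shows that the first bracket is bounded in absolute value by $Lt_k\|\eta_k\|_{x_k}^2$, and Cauchy--Schwarz together with \cref{assump:Tk} bounds the second term by $CL_g t_k\|\eta_k\|_{x_k}^2$ for $k\in K_1$ and by $CL_g(t_k+t_k^2)\|\eta_k\|_{x_k}^2$ for $k \in K_2$.

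For $k \in K_1$ the bounds combine linearly to give $t_k \ge (1-c_2)|\langle g_k, \eta_k\rangle_{x_k}|/\bigl((L+CL_g)\|\eta_k\|_{x_k}^2\bigr)$ at once. For $k \in K_2$ one instead obtains a quadratic inequality in $t_k$; here the extra hypothesis $\|g_k\|_{x_k} \le \mu\|\eta_k\|_{x_k}$ forces $|\langle g_k, \eta_k\rangle_{x_k}| \le \mu\|\eta_k\|_{x_k}^2$, which bounds the constant term of the quadratic uniformly in $\|\eta_k\|_{x_k}^2$ and, after an elementary manipulation of the quadratic formula, yields a lower bound of the same shape $t_k \ge c_\ast|\langle g_k, \eta_k\rangle_{x_k}|/\|\eta_k\|_{x_k}^2$ for a constant $c_\ast>0$ independent of $k$. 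Substituting either lower bound into the Armijo inequality rewritten as $f(x_k) - f(x_{k+1}) \ge c_1 t_k |\langle g_k, \eta_k\rangle_{x_k}|$ gives $f(x_k) - f(x_{k+1}) \ge c'\,\langle g_k, \eta_k\rangle_{x_k}^2/\|\eta_k\|_{x_k}^2$, and telescoping with $f \ge f_\ast$ produces \eqref{eq:zouten}. The only genuine obstacle is the quadratic case on $K_2$, and it is precisely this case that explains why the hypothesis $\|g_k\|_{x_k} \le \mu\|\eta_k\|_{x_k}$ is imposed on $K_2$ rather than everywhere.
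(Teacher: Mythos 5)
Your proposal is correct and follows essentially the same route as the paper's proof: the same decomposition of $\langle g_{k+1},\T^{(k)}(\eta_k)\rangle_{x_{k+1}}-\langle g_k,\eta_k\rangle_{x_k}$ into a differentiated-retraction part controlled by~\eqref{eq:Lip} and a discrepancy part controlled by \cref{assump:Tk}, the same linear-versus-quadratic split over $K_1$ and $K_2$ with the hypothesis $\|\grad f(x_k)\|_{x_k}\le\mu\|\eta_k\|_{x_k}$ used exactly to make the quadratic root bound uniform, and the same Armijo-plus-telescoping finish. No substantive differences to report.
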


\begin{proof}
The proof is completed by combining the discussions in~\cite{sato2021riemannian,zhu2020riemannian}.
For simplicity, let $g_k$ denote $\grad f(x_k)$ in this proof.

From the assumption and the triangle and Cauchy--Schwarz inequalities, we obtain
\begin{align}
    &  \quad \ (c_2 - 1)\langle g_k, \eta_k\rangle_{x_k} \\
    & \overset{\eqref{eq:M_Twolfe}}{\leq} \langle g_{k+1}, \T^{(k)}(\eta_k)\rangle_{x_{k+1}} - \langle g_k, \eta_k\rangle_{x_k} \notag\\
    & \leq |\langle g_{k+1}, \T^{(k)}(\eta_k) - \D R_{x_k}(t_k\eta_k)[\eta_k]\rangle_{x_{k+1}}| \notag\\
    & \qquad+ |\langle g_{k+1}, \D R_{x_k}(t_k\eta_k)[\eta_k]\rangle_{x_{k+1}} - \langle g_k, \eta_k\rangle_{x_k}| \notag\\
    & \leq |\langle g_{k+1}, \T^{(k)}(\eta_k) - \D R_{x_k}(t_k\eta_k)[\eta_k]\rangle_{x_{k+1}}| \notag\\
    & \qquad+ \|\eta_k\|_{x_k}\bigg|\D (f \circ R_{x_k})\bigg(t_k\|\eta_k\|_{x_k}\frac{\eta_k}{\|\eta_k\|_{x_k}}\bigg)\bigg[\frac{\eta_k}{\|\eta_k\|_{x_k}}\bigg] - \D (f \circ R_{x_k})(0)\bigg[\frac{\eta_k}{\|\eta_k\|_{x_k}}\bigg]\bigg| \notag\\
    & \overset{\eqref{eq:Lip}}{\leq} \|g_{k+1}\|_{x_{k+1}}\|\T^{(k)}(\eta_k) - \D R_{x_k}(t_k\eta_k)[\eta_k]\|_{x_{k+1}} + Lt_k\|\eta_k\|_{x_k}^2\notag\\
    & \overset{\eqref{eq:TkDR1}, \eqref{eq:TkDR2}}\leq
    \begin{cases}
    C\|g_{k+1}\|_{x_{k+1}}t_k\|\eta_k\|_{x_k}^2 + Lt_k\|\eta_k\|_{x_k}^2, \qquad \qquad \ \ \, k \in K_1,\\
    C\|g_{k+1}\|_{x_{k+1}}(t_k+t_k^2)\|\eta_k\|_{x_k}^2 + Lt_k\|\eta_k\|_{x_k}^2, \qquad k \in K_2
    \end{cases}.\notag
\end{align}
Therefore, we obtain
\begin{equation}
\label{eq:tk1}
    t_k \ge -\frac{1-c_2}{C\|g_{k+1}\|_{x_{k+1}}+L}\frac{\langle g_k, \eta_k\rangle_{x_k}}{\|\eta_k\|_{x_k}^2}
    \ge -\frac{1-c_2}{CL_g+L}\frac{\langle g_k, \eta_k\rangle_{x_k}}{\|\eta_k\|_{x_k}^2}, \qquad k \in K_1.
\end{equation}
For $k \in K_2$, we have
\begin{equation*}
 C\|g_{k+1}\|_{x_{k+1}}\|\eta_k\|_{x_k}^2t_k^2 + (L + C\|g_{k+1}\|_{x_{k+1}})\|\eta_k\|_{x_k}^2t_k + (1-c_2)\langle g_k, \eta_k\rangle_{x_k} \ge 0.
\end{equation*}
It follows from $t_k > 0$ and $(1-c_2)\langle g_k, \eta_k\rangle_{x_k} < 0$ that
\begin{equation*}
    t_k \ge -\frac{2(1-c_2)}{u_k}\frac{\langle g_k, \eta_k\rangle_{x_k}}{\|\eta_k\|_{x_k}^2}, \qquad k \in K_2,
\end{equation*}
where
\begin{align*}
    u_k & := L + C\|g_{k+1}\|_{x_{k+1}} + \sqrt{(L + C\|g_{k+1}\|_{x_{k+1}})^2 - 4C(1-c_2)\|g_{k+1}\|_{x_{k+1}}\frac{\langle g_k, \eta_k\rangle_{x_k}} {\|\eta_k\|_{x_k}^2}} \\
    & \leq L + C L_g + \sqrt{(L+CL_g)^2 + 4C(1-c_2)L_g \mu} =: u > 0.\notag
\end{align*}
Here, we used $\|g_k\|_{x_k} \le L_g$ and $-\langle g_k, \eta_k\rangle_{x_k} \le \|g_k\|_{x_k}\|\eta_k\|_{x_k} \le \mu \|\eta_k\|_{x_k}^2$ from the assumption $\|g_k\|_{x_k} \le \mu \|\eta_k\|_{x_k}$ for $k \in K_2$.
Using the constant $u > 0$, we obtain
\begin{equation}
\label{eq:tk2}
    t_k \geq -\frac{2(1-c_2)}{u}\frac{\langle g_k, \eta_k\rangle_{x_k}}{\|\eta_k\|_{x_k}^2}, \qquad k \in K_2.
\end{equation}
With the constant $U := \min\{(1-c_2)/(CL_g+L),2(1-c_2)/u\} > 0$, \eqref{eq:tk1} and \eqref{eq:tk2} yield $t_k \geq -U\langle g_k, \eta_k\rangle_{x_k}/\|\eta_k\|_{x_k}^2$
for all $k \ge 0$.
This and~\eqref{eq:M_armijo} yield
\begin{equation*}
f(x_{k+1})
\le f(x_k) - c_1 U\frac{\langle g_k, \eta_k\rangle_{x_k}^2}{\|\eta_k\|_{x_k}^2}
\le f(x_0) - c_1 U\sum_{j=0}^k \frac{\langle g_j, \eta_j\rangle_{x_j}^2}{\|\eta_j\|_{x_j}^2}.
\end{equation*}
It follows from \cref{assump:RCG} that $f(x) \ge f_*$ for all $x \in \M$.
Therefore, we have
\begin{equation*}
    \sum_{j=0}^k \frac{\langle g_j, \eta_j\rangle_{x_j}^2}{\|\eta_j\|_{x_j}^2} \le \frac{f(x_0) - f(x_{k+1})}{c_1 U} \leq \frac{f(x_0) - f_*}{c_1 U}.
\end{equation*}
Since the right-hand side is constant, taking the limit $k \to \infty$, we obtain the desired result.
This completes the proof.
\end{proof}

\begin{remark}
The assumption in \cref{thm:zouten} that there exists $\mu > 0$ such that $\| \grad f(x_k) \|_{x_k} \le \mu \|\eta_k\|_{x_k}$ for all $k \in K_2$ does not appear when we discuss Zoutendijk's theorem with the standard Wolfe conditions~\eqref{eq:M_armijo}--\eqref{eq:M_wolfe}~\cite{ring2012optimization,sato2021riemannian}, i.e., the case of $\T^{(k)}(\eta_k) = \mathcal{T}^R_{t_k\eta_k}(\eta_k)$, since regarding this case, we can take $K_1 = \mathbb{N}$ and $K_2 = \emptyset$.
Thus, this assumption may seem strict.
However, fortunately, in the subsequent analyses of our R-CG methods, we realize that this assumption automatically holds (requiring the generalized $\T^{(k)}$-Wolfe conditions with $c_3 = 0$ for the CD-type of R-CG methods in \cref{subsubsec:CD}).
Therefore, we do not need to explicitly suppose this assumption in the R-CG methods, and \cref{thm:zouten} is still a powerful tool, even for a general map $\T^{(k)}$.

Practically, we consider taking $K_1$ as large as possible in \cref{assump:Tk}. Therefore, $K_2$ is expected to consist of only $k$ for which~\eqref{eq:TkDR1} does not (or is not shown to) hold. Taking $K_2$ as small as possible, we require the condition $\|\grad f(x_k)\|_{x_k} \le \mu\|\eta_k\|_{x_k}$ for as the small number of $k$ as possible in \cref{thm:zouten}.
\end{remark}

\section{Convergence analyses of the R-CG methods}
\label{sec:5}
Throughout this section, we denote $g_k := \grad f(x_k) \in T_{x_k}\M$ in \cref{alg:RCGgeneral} and use the notation $\grad f(x_k)$ and $g_k$ interchangeably.
We use $\grad f(x_k)$ in the statement of the propositions and theorems and $g_k$ in their proofs and discussions for simplicity.

Considering the theoretical convergence properties, the quantity in the numerator of the formulas for $\beta_{k+1}$ is influential.
In what follows, we divide the six types of $\beta_{k+1}$ into two categories, depending on their numerators.
One consists of $\beta_{k+1}^{\RFR}$, $\beta_{k+1}^{\RDY}$, and $\beta_{k+1}^{\RCD}$ with the numerator $\|{g_{k+1}}\|_{x_{k+1}}^2$, and the other consists of $\beta_{k+1}^{\RPRP}$, $\beta_{k+1}^{\RHS}$, and $\beta_{k+1}^{\RLS}$ with the numerator $\|g_{k+1}\|_{x_{k+1}}^2 - \langle g_{k+1}, s_k \mathscr{S}^{(k)}(g_k)\rangle_{x_{k+1}}$.

\subsection{Global convergence analyses of the R-CG methods with \boldmath$\beta^{\RFR}$, $\beta^{\RDY}$, and $\beta^{\RCD}$}
\label{subsec:FR_CD_DY}

We prove the global convergence properties of the R-CG methods with $\beta^{\RFR}_{k+1}$, $\beta^{\RDY}_{k+1}$, and $\beta^{\RCD}_{k+1}$, defined in~\eqref{eq:RFR}--\eqref{eq:RCD}, and with some other related $\beta_{k+1}$.
In the subsequent analyses, a key property is that the algorithms with appropriately chosen step lengths satisfy the sufficient descent condition, i.e., there exists a constant $c > 0$ such that $\langle g_k, \eta_k\rangle_{x_k} \le -c\|g_k\|_{x_k}^2$ holds.

\subsubsection{R-CG methods with
$\beta^{\RFR}$ and its variant}
\label{subsubsec:FR}

We recall that $\beta_{k+1}^{\RFR}$ is defined in~\eqref{eq:RFR} as $\beta_{k+1}^{\text{\rm{\RFR}}} := \|g_{k+1}\|_{x_{k+1}}^2/\|g_k\|_{x_k}^2$.
The convergence analysis of the R-CG method with $\beta_{k+1} = \beta_{k+1}^{\RFR}$ can be completed following the standard discussion in the existing ones (see, e.g., \cite{sato2021riemannian}).
Here, we provide an analysis for the more general class of $\beta_{k+1}$, i.e., $\beta_{k+1}$ satisfying $|\beta_{k+1}| \le \beta_{k+1}^{\RFR}$.
We first show that such $\beta_{k+1}$ guarantees sufficient descent directions and that the ratio $\|g_k\|_{x_k} / \|\eta_k\|_{x_k}$ is bounded above.

\begin{proposition}
\label{lem:RFR}
Let sequence $\{x_k\}$ be generated by \cref{alg:RCGgeneral} with $\beta_{k+1}$ satisfying $|\beta_{k+1}| \leq \beta_{k+1}^{\text{\rm{\RFR}}}$, where $\beta_{k+1}^{\text{\rm{\RFR}}}$ is defined as~\eqref{eq:RFR}.
If, for all $k \ge 0$, $\grad f(x_k) \neq 0$ and step lengths $t_k$ satisfy the strong $\mathscr{T}^{(k)}$-Wolfe conditions~\eqref{eq:M_armijo} and~\eqref{eq:M_Tswolfe} with $0 < c_1 < c_2 < 1/2$, then we have
\begin{equation}
\label{eq:proposition_FR}
-\frac{1}{1-c_2} \le \frac{\langle \grad f(x_k), \eta_k\rangle_{x_k}}{\|\grad f(x_k)\|_{x_k}^2} \le -\frac{1-2c_2}{1-c_2}
\end{equation}
and
\begin{equation}
    \label{eq:proposition_FR2}
    \|\grad f(x_k)\|_{x_k} \le \frac{1-c_2}{1-2c_2}\|\eta_k\|_{x_k}.
\end{equation}
\end{proposition}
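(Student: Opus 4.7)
The plan is to prove \eqref{eq:proposition_FR} by induction on $k$, tracking the normalized quantity $a_k := \langle g_k, \eta_k\rangle_{x_k}/\|g_k\|_{x_k}^2$, and then derive \eqref{eq:proposition_FR2} from \eqref{eq:proposition_FR} via the Cauchy--Schwarz inequality. This is the standard scheme for Fletcher--Reeves-type analyses, adapted to our framework by using the strong $\mathscr{T}^{(k)}$-Wolfe condition in place of the Euclidean one and by exploiting the fact that $s_k \le 1$.

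\textbf{Base case.} Since $\eta_0 = -\grad f(x_0)$, a direct computation gives $a_0 = -1$, and one checks that $-1/(1-c_2) \le -1 \le -(1-2c_2)/(1-c_2)$ for any $c_2 \in (0,1/2)$.

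\textbf{Inductive step.} Assume that \eqref{eq:proposition_FR} holds at index $k$; in particular $\langle g_k,\eta_k\rangle_{x_k} < 0$. Taking the inner product of the defining formula
\begin{equation*}
\eta_{k+1} = -g_{k+1} + \beta_{k+1}\, s_k\, \mathscr{T}^{(k)}(\eta_k)
\end{equation*}
with $g_{k+1}$ and dividing by $\|g_{k+1}\|_{x_{k+1}}^2$ yields
\begin{equation*}
a_{k+1} = -1 + \beta_{k+1}\, s_k\, \frac{\langle g_{k+1},\mathscr{T}^{(k)}(\eta_k)\rangle_{x_{k+1}}}{\|g_{k+1}\|_{x_{k+1}}^2}.
\end{equation*}
Now I combine three ingredients: the hypothesis $|\beta_{k+1}| \le \beta_{k+1}^{\RFR} = \|g_{k+1}\|_{x_{k+1}}^2/\|g_k\|_{x_k}^2$; the scaling bound $s_k \le 1$ from \eqref{eq:sk}; and the strong $\mathscr{T}^{(k)}$-Wolfe condition \eqref{eq:M_Tswolfe}, which gives $|\langle g_{k+1},\mathscr{T}^{(k)}(\eta_k)\rangle_{x_{k+1}}| \le -c_2 \langle g_k,\eta_k\rangle_{x_k}$. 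Multiplying these together and cancelling $\|g_{k+1}\|_{x_{k+1}}^2$ produces
\begin{equation*}
\left|\beta_{k+1}\, s_k\, \frac{\langle g_{k+1},\mathscr{T}^{(k)}(\eta_k)\rangle_{x_{k+1}}}{\|g_{k+1}\|_{x_{k+1}}^2}\right| \le -c_2 a_k,
\end{equation*}
so that $a_{k+1} \in [-1 + c_2 a_k,\, -1 - c_2 a_k]$. Substituting the inductive bounds on $a_k$ into the endpoints, a short algebraic manipulation shows the two endpoints collapse exactly to $-1/(1-c_2)$ and $-(1-2c_2)/(1-c_2)$, closing the induction.

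\textbf{Derivation of \eqref{eq:proposition_FR2}.} From the upper bound in \eqref{eq:proposition_FR}, $-\langle g_k,\eta_k\rangle_{x_k} \ge \frac{1-2c_2}{1-c_2}\|g_k\|_{x_k}^2$; combined with the Cauchy--Schwarz estimate $-\langle g_k,\eta_k\rangle_{x_k} \le \|g_k\|_{x_k}\|\eta_k\|_{x_k}$, this directly yields \eqref{eq:proposition_FR2}.

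The calculation is not conceptually delicate; the only point that needs care is ensuring the cancellation between the numerator of $\beta_{k+1}^{\RFR}$ and the factor $\|g_{k+1}\|_{x_{k+1}}^2$ in the denominator of the ratio, together with the symmetric interval $[-c_2 a_k, -c_2 a_k]\cdot(-1)$ that makes the endpoints of the inductive interval a fixed point of the map $a \mapsto -1 \mp c_2 a$. The role of the scaling parameter $s_k$ is merely passive here (it enters only through the bound $s_k \le 1$), which is why the framework of \cref{alg:RCGgeneral} accommodates this classical argument without modification.
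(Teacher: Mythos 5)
Your proof is correct and follows essentially the same route as the paper's: induction on the normalized quantity $a_k = \langle g_k,\eta_k\rangle_{x_k}/\|g_k\|_{x_k}^2$, combining $|\beta_{k+1}|\le\beta_{k+1}^{\mathrm{R\text{-}FR}}$, $s_k\le 1$, and the strong $\mathscr{T}^{(k)}$-Wolfe bound to obtain $|a_{k+1}+1|\le -c_2 a_k$, then closing the induction via the lower bound $a_k\ge -1/(1-c_2)$ and deducing \eqref{eq:proposition_FR2} by Cauchy--Schwarz. No gaps.
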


\begin{proof}
We define $a_k := \langle g_k, \eta_k\rangle_{x_k} / \|g_k\|_{x_k}^2$, which is the quantity in the middle of~\eqref{eq:proposition_FR}.
The proof of~\eqref{eq:proposition_FR} is completed by induction.
For $k=0$, \eqref{eq:proposition_FR} clearly holds since $a_0 = -1$ from $\eta_0 = -g_0$.
Subsequently, assume that~\eqref{eq:proposition_FR} is true for some $k \ge 0$.
Then, it follows from~\eqref{eq:RCG_direction} that
\begin{equation*}
    a_{k+1} = \frac{\langle g_{k+1}, -g_{k+1} + \beta_{k+1} s_k \T^{(k)}(\eta_k)\rangle_{x_{k+1}}}{\|g_{k+1}\|_{x_{k+1}}^2}
    = -1 + \beta_{k+1}s_k\frac{\langle g_{k+1}, \T^{(k)}(\eta_k)\rangle_{x_{k+1}}}{\|g_{k+1}\|_{x_{k+1}}^2}.
\end{equation*}
Here, \eqref{eq:M_Tswolfe} implies $|\langle g_{k+1}, \T^{(k)}(\eta_k)\rangle_{x_{k+1}}| \le - c_2 \langle g_k, \eta_k\rangle_{x_k}$.
Considering $0 < s_k \le 1$ from~\eqref{eq:sk} and $|\beta_{k+1}| \le \beta_{k+1}^{\RFR}$, we obtain
\begin{equation*}
    |a_{k+1} + 1| \leq
    \frac{\beta_{k+1}^{\RFR}}{{\|g_{k+1}\|_{x_{k+1}}^2}} |\langle g_{k+1}, \T^{(k)}(\eta_k)\rangle_{x_{k+1}}|
    \leq -c_2 \frac{\langle g_k, \eta_k\rangle_{x_{k}}}{\|g_{k}\|_{x_{k}}^2} = -c_2 a_k,
\end{equation*}
indicating $-1 + c_2 a_k \le a_{k+1} \le -1 - c_2 a_k$.
Since~\eqref{eq:proposition_FR} yields $a_k \ge -1/(1-c_2)$, we have $-1/(1-c_2) \le a_{k+1} \le -(1-2c_2)/(1-c_2)$, i.e., \eqref{eq:proposition_FR} also holds if $k$ is replaced with $k+1$.
This ends the proof of~\eqref{eq:proposition_FR} for all $k \ge 0$.

It follows from the Cauchy--Schwarz inequality $\langle g_k, \eta_k\rangle_{x_k} \ge -\|g_k\|_{x_k} \|\eta_k\|_{x_k}$ that $a_k \geq -\|\eta_k\|_{x_k}\ / \|g_k\|_{x_k}$,
which, together with~\eqref{eq:proposition_FR}, yields~\eqref{eq:proposition_FR2}.
\end{proof}

Using this theorem, we show the global convergence property of R-CG methods with $\beta_{k+1}$ satisfying $\beta_{k+1} \le |\beta^{\RFR}_{k+1}|$.
\begin{theorem}
\label{thm:FR}
Under \cref{assump:RCG}, let sequence $\{x_k\}$ be generated by \cref{alg:RCGgeneral} with $\beta_{k+1}$ satisfying $|\beta_{k+1}| \le \beta_{k+1}^{\text{\rm{\RFR}}}$, where $\beta_{k+1}^{\text{\rm{\RFR}}}$ is defined as~\eqref{eq:RFR}.
If $\T^{(k)}$ satisfies \cref{assump:Tk} and the step lengths satisfy the strong $\mathscr{T}^{(k)}$-Wolfe conditions~\eqref{eq:M_armijo} and~\eqref{eq:M_Tswolfe} with $0 < c_1 < c_2 < 1/2$, then we have
\begin{equation}
\label{eq:lim_RFR}
\liminf_{k\to\infty}\|\grad f(x_k)\|_{x_k}=0.
\end{equation}
\end{theorem}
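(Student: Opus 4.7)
The plan is to deduce \eqref{eq:lim_RFR} from a Zoutendijk-type summability combined with a recursive bound on $\|\eta_k\|_{x_k}^2/\|g_k\|_{x_k}^4$, following the classical Fletcher--Reeves argument adapted to the present framework. First, I would invoke \cref{lem:RFR}: its left inequality in~\eqref{eq:proposition_FR} gives the sufficient descent property
\begin{equation*}
\langle g_k, \eta_k\rangle_{x_k} \le -\frac{1-2c_2}{1-c_2}\,\|g_k\|_{x_k}^2,
\end{equation*}
so in particular each $\eta_k$ is a descent direction, while~\eqref{eq:proposition_FR2} supplies $\|g_k\|_{x_k} \le \mu \|\eta_k\|_{x_k}$ with $\mu := (1-c_2)/(1-2c_2)$ for \emph{every} $k\ge 0$, which is stronger than the hypothesis required in \cref{thm:zouten}. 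Applying \cref{thm:zouten} therefore yields $\sum_k \langle g_k,\eta_k\rangle_{x_k}^2/\|\eta_k\|_{x_k}^2 < \infty$, and combining this with the sufficient descent inequality gives
\begin{equation*}
\sum_{k=0}^\infty \frac{\|g_k\|_{x_k}^4}{\|\eta_k\|_{x_k}^2} < \infty.
\end{equation*}

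Next, I would argue by contradiction: suppose $\liminf_{k\to\infty}\|g_k\|_{x_k} > 0$, so there exists $\gamma>0$ with $\|g_k\|_{x_k} \ge \gamma$ for all sufficiently large $k$. Squaring $\eta_{k+1} + g_{k+1} = \beta_{k+1} s_k \T^{(k)}(\eta_k)$ in the norm at $x_{k+1}$ and using the scaling bound $s_k\|\T^{(k)}(\eta_k)\|_{x_{k+1}} \le \|\eta_k\|_{x_k}$ from~\eqref{eq:sk} together with $|\beta_{k+1}|\le \beta_{k+1}^{\RFR} = \|g_{k+1}\|_{x_{k+1}}^2/\|g_k\|_{x_k}^2$, I would obtain
\begin{equation*}
\|\eta_{k+1}\|_{x_{k+1}}^2 \le \frac{\|g_{k+1}\|_{x_{k+1}}^4}{\|g_k\|_{x_k}^4}\|\eta_k\|_{x_k}^2 - 2\langle g_{k+1},\eta_{k+1}\rangle_{x_{k+1}} - \|g_{k+1}\|_{x_{k+1}}^2.
\end{equation*}
Dividing by $\|g_{k+1}\|_{x_{k+1}}^4$ and using the upper bound on $-\langle g_{k+1},\eta_{k+1}\rangle_{x_{k+1}}/\|g_{k+1}\|_{x_{k+1}}^2 \le 1/(1-c_2)$ from~\eqref{eq:proposition_FR} gives a telescoping recursion of the form
\begin{equation*}
\frac{\|\eta_{k+1}\|_{x_{k+1}}^2}{\|g_{k+1}\|_{x_{k+1}}^4} \le \frac{\|\eta_k\|_{x_k}^2}{\|g_k\|_{x_k}^4} + \frac{c'}{\|g_{k+1}\|_{x_{k+1}}^2},\qquad c' := \frac{1+c_2}{1-c_2}.
\end{equation*}

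Iterating from $k=0$ (where $\|\eta_0\|_{x_0}^2/\|g_0\|_{x_0}^4 = 1/\|g_0\|_{x_0}^2$) and using $\|g_j\|_{x_j}\ge\gamma$ yields $\|\eta_{k+1}\|_{x_{k+1}}^2/\|g_{k+1}\|_{x_{k+1}}^4 = O(k/\gamma^2)$, so $\|g_k\|_{x_k}^4/\|\eta_k\|_{x_k}^2$ decays no faster than $\Omega(1/k)$, forcing $\sum_k \|g_k\|_{x_k}^4/\|\eta_k\|_{x_k}^2 = \infty$, a contradiction. The main obstacle I expect is bookkeeping in step two, in particular verifying that the scaling condition~\eqref{eq:sk} exactly cancels the $\|\T^{(k)}(\eta_k)\|_{x_{k+1}}^2$ factor and that $|\beta_{k+1}|^2\le(\beta_{k+1}^{\RFR})^2$ combines cleanly with the FR-type telescoping identity; the remaining ingredients are direct consequences of \cref{lem:RFR} and \cref{thm:zouten}.
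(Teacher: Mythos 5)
Your argument is correct and follows essentially the same route as the paper's proof: \cref{lem:RFR} for sufficient descent and the bound $\|\grad f(x_k)\|_{x_k}\le\mu\|\eta_k\|_{x_k}$, then \cref{thm:zouten}, then an FR-type recursion, then the divergence of the harmonic series --- indeed, your recursion for $\|\eta_{k+1}\|_{x_{k+1}}^2/\|g_{k+1}\|_{x_{k+1}}^4$ (obtained by squaring $\eta_{k+1}+g_{k+1}=\beta_{k+1}s_k\T^{(k)}(\eta_k)$ and normalizing) is exactly the paper's recurrence $\|\eta_{k+1}\|_{x_{k+1}}^2\le\frac{1+c_2}{1-c_2}\|g_{k+1}\|_{x_{k+1}}^2+(\beta^{\RFR}_{k+1})^2\|\eta_k\|_{x_k}^2$ divided through by $\|g_{k+1}\|_{x_{k+1}}^4$, with the identical constant. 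The only omission is the degenerate case $\grad f(x_{k_0})=0$ for some $k_0$, which must be dispatched separately (as the paper does in one line) since $\beta^{\RFR}_{k+1}$ and your division by $\|g_{k+1}\|_{x_{k+1}}^4$ both require nonvanishing gradients.
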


\begin{proof}
If $g_{k_0} = 0$ holds for some $k_0 \geq 0$, then~\eqref{eq:RCG_direction} and~\eqref{eq:RFR} imply that $g_{k} = 0$ for all $k \ge k_0$; thus, \eqref{eq:lim_RFR} holds.

Subsequently, we assume $g_k \neq 0$ for all $k \geq 0$ and prove~\eqref{eq:lim_RFR} by contradiction.
To this end, we assume that~\eqref{eq:lim_RFR} does not hold, indicating that there exists $\varepsilon > 0$ such that $\|g_k\|_{x_k} \ge \varepsilon$ for all $k \ge 0$.
Furthermore, since the assumption in \cref{lem:RFR} holds, we have~\eqref{eq:proposition_FR} and \eqref{eq:proposition_FR2}.
Hence, the assumption in \cref{thm:zouten} is also ensured to imply~\eqref{eq:zouten}.
With $c := (1+c_2) / (1-c_2)$, we can evaluate $\|\eta_{k+1}\|_{x_{k+1}}^2$ as
\begin{align}
    \|\eta_{k+1}\|^2_{x_{k+1}} &\overset{\eqref{eq:RCG_direction}}{=} \|g_{k+1}\|_{x_{k+1}}^2 - 2 \beta_{k+1} s_k \langle g_{k+1}, \T^{(k)}(\eta_k)\rangle_{x_{k+1}} + \beta_{k+1}^2 s_k^2 \|\T^{(k)}(\eta_k)\|_{x_{k+1}}^2\notag\\
    & \overset{\eqref{eq:sk}}{\le} \|g_{k+1}\|_{x_{k+1}}^2 + 2|\beta_{k+1}| |\langle g_{k+1}, \T^{(k)}(\eta_k)\rangle_{x_{k+1}}| + \beta_{k+1}^2 \|\eta_k\|_{x_k}^2\notag\\
    &\overset{\eqref{eq:M_Tswolfe}}{\le} \|g_{k+1}\|_{x_{k+1}}^2 - 2c_2\beta_{k+1}^{\RFR}\langle g_k, \eta_k\rangle_{x_k} + \beta_{k+1}^2\|\eta_k\|_{x_k}^2\notag\\
    &\overset{\eqref{eq:proposition_FR}}{\le} \|g_{k+1}\|_{x_{k+1}}^2 + \frac{2c_2}{1-c_2}\beta^{\RFR}_{k+1}\|g_k\|_{x_k}^2 + (\beta^{\RFR}_{k+1})^2\|\eta_k\|_{x_k}^2\notag\\
    &\overset{\eqref{eq:RFR}}{=} c\|g_{k+1}\|_{x_{k+1}}^2 + (\beta^{\RFR}_{k+1})^2\|\eta_k\|_{x_k}^2.\notag
\end{align}
This recurrence relation together with $\|\eta_0\|_{x_0} =\|g_0\|_{x_0}$ and $c > 1$ gives
\begin{align}
    \|\eta_k\|_{x_k}^2 &\le c\bigg(\|g_k\|_{x_k}^2 + \sum_{j = 1}^{k-1} (\beta^{\RFR}_k)^2 \cdots (\beta^{\RFR}_{j+1})^2\|g_j\|_{x_j}^2\bigg) + (\beta^{\RFR}_k)^2 \cdots (\beta^{\RFR}_1)^2\|\eta_0\|_{x_0}^2\notag\\
    &< c \|g_k\|_{x_k}^4\sum_{j=0}^k \|g_{j}\|_{x_{j}}^{-2}
    \le \frac{c}{\varepsilon^{2}}\|g_k\|_{x_k}^4 (k+1).\notag
\end{align}
Therefore, using~\eqref{eq:proposition_FR} again, we obtain
\begin{equation*}
    \sum_{j = 0}^k \frac{\langle g_j, \eta_j\rangle_{x_j}^2}{\|\eta_j\|_{x_j}^2}
    > \frac{\varepsilon^2}{c}\sum_{j=0}^k\frac{\langle g_j, \eta_j\rangle_{x_j}^2}{\|g_j\|_{x_j}^4} \frac{1}{j+1}
    \geq \frac{\varepsilon^2(1-2c_2)^2}{c(1-c_2)^2}\sum_{j=1}^{k+1}\frac{1}{j}.
\end{equation*}
Taking the limit $k \to \infty$, the right-hand side, and hence left-hand side, diverge to $\infty$.
This contradicts~\eqref{eq:zouten}, completing the proof.
\end{proof}

The global convergence property is ensured for \cref{alg:RCGgeneral} with $\beta_{k+1} = \beta^{\RFR}_{k+1}$ as a corollary of \cref{thm:FR}.
Because we have analyzed a class of $\beta_{k+1}$, rather than the specific $\beta^{\RFR}_{k+1}$ only, we can apply the results here to other R-CG methods such as one with $\beta^{\RCD}_{k+1}$ (see \cref{subsubsec:CD}).

\subsubsection{R-CG methods with $\beta^{\RDY}$ and its variant}
In this subsection, we give a global convergence analysis of the R-CG methods with a class of $\beta_{k+1}$ containing $\beta^{\RDY}_{k+1} := \|g_{k+1}\|_{x_{k+1}}^2 / (\langle g_{k+1}, s_k\T^{(k)}(\eta_k)\rangle_{x_{k+1}} - \langle g_k, \eta_k\rangle_{x_k})$ in~\eqref{eq:RDY}.
We first show that such R-CG methods generate descent search directions and that, with additional assumptions, the search directions are sufficient descent directions.
Thereafter, we build a global convergence result.
\cref{lem:RDY} and \cref{thm:DY} are inspired by, but more general than, the results in~\cite{sato2021riemannian,zhu2020riemannian}.
Therefore, their proofs are not verbatim.

\begin{proposition}
\label{lem:RDY}
Let sequence $\{x_k\}$ be generated by \cref{alg:RCGgeneral} with $\beta_{k+1}$ satisfying $0 \le \beta_{k+1} \leq \beta_{k+1}^{\text{\rm{\RDY}}}$, where $\beta_{k+1}^{\text{\rm{\RDY}}}$ is defined as~\eqref{eq:RDY}.
Assume that, for all $k \ge 0$, $\grad f(x_k) \neq 0$ and the step lengths $t_k$ satisfy the $\mathscr{T}^{(k)}$-Wolfe conditions~\eqref{eq:M_armijo} and~\eqref{eq:M_Twolfe} with $0 < c_1 < c_2 < 1$.
Then, the algorithm is well-defined, i.e., for all $k \ge 0$, $\beta_{k+1}^{\text{\rm{\RDY}}} > 0$ holds, and thus $\beta_{k+1}$ with $0 \le \beta_{k+1} \le \beta_{k+1}^{\text{\rm{\RDY}}}$ exist, and we have
\begin{equation}
\label{eq:proposition_DY1}
    \langle \grad f(x_k), \eta_k\rangle_{x_k} < \min\{0, \langle \grad f(x_{k+1}), s_k\T^{(k)}(\eta_k)\rangle_{x_{k+1}}\}.
\end{equation}
Furthermore, if, for an arbitrary $k \ge 1$\footnote{For $k=0$, \eqref{eq:proposition_DY2} and \eqref{eq:proposition_DY3} clearly hold without any assumption since $\eta_0 = -\grad f(x_0)$.}, $t_{k-1}$ satisfies the generalized $\T^{(k-1)}$-Wolfe conditions~\eqref{eq:M_armijo} and~\eqref{eq:M_Tgwolfe} with $0 < c_1 < c_2 < 1$ and $c_3 \geq 0$, then for this $k$, it holds that
\begin{equation}
\label{eq:proposition_DY2}
-\frac{1}{1-c_2} \le \frac{\langle \grad f(x_k), \eta_k\rangle_{x_k}}{\|\grad f(x_k)\|_{x_k}^2} \le -\frac{1}{1+c_3}
\end{equation}
and
\begin{equation}
    \label{eq:proposition_DY3}
    \|\grad f(x_k)\|_{x_k} \le (1+c_3)\|\eta_k\|_{x_k}.
\end{equation}
\end{proposition}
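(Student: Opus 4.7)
The plan is to establish the three claims in sequence: first prove well-definedness and the descent property~\eqref{eq:proposition_DY1} by induction on $k$, and then derive the ratio bounds~\eqref{eq:proposition_DY2} and~\eqref{eq:proposition_DY3} at any index $k \ge 1$ where the generalized $\T^{(k-1)}$-Wolfe conditions hold.

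For the induction, the base case $k=0$ is immediate, since $\eta_0 = -g_0$ gives $\langle g_0, \eta_0\rangle_{x_0} = -\|g_0\|_{x_0}^2 < 0$. Assuming $\langle g_k, \eta_k\rangle_{x_k} < 0$ for some $k$, the $\T^{(k)}$-Wolfe curvature condition~\eqref{eq:M_Twolfe}, combined with $s_k \in (0, 1]$, yields
\[
\langle g_{k+1}, s_k \T^{(k)}(\eta_k)\rangle_{x_{k+1}} \ge c_2 s_k \langle g_k, \eta_k\rangle_{x_k} > \langle g_k, \eta_k\rangle_{x_k},
\]
where strictness uses $c_2 s_k < 1$ together with the sign of $\langle g_k, \eta_k\rangle_{x_k}$. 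This single inequality shows that the denominator of $\beta_{k+1}^{\RDY}$ is positive (so $\beta_{k+1}^{\RDY} > 0$ and hence a $\beta_{k+1} \in [0, \beta_{k+1}^{\RDY}]$ exists) and simultaneously provides the second inequality in~\eqref{eq:proposition_DY1}. To close the induction, I write $\langle g_{k+1}, \eta_{k+1}\rangle_{x_{k+1}} = -\|g_{k+1}\|_{x_{k+1}}^2 + \beta_{k+1} A$ with $A := \langle g_{k+1}, s_k \T^{(k)}(\eta_k)\rangle_{x_{k+1}}$ and split on the sign of $A$: when $A \le 0$ the term $\beta_{k+1} A$ is non-positive; when $A > 0$ the bound $\beta_{k+1} \le \beta_{k+1}^{\RDY}$ forces $\beta_{k+1} A < \|g_{k+1}\|_{x_{k+1}}^2$ because the ratio $A/(A - \langle g_k, \eta_k\rangle_{x_k})$ lies strictly in $(0,1)$.

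For~\eqref{eq:proposition_DY2}, I reuse the same algebraic setup at index $k \ge 1$ with $A := \langle g_k, s_{k-1} \T^{(k-1)}(\eta_{k-1})\rangle_{x_k}$ and $B := -\langle g_{k-1}, \eta_{k-1}\rangle_{x_{k-1}} > 0$, so that $\beta_k^{\RDY} = \|g_k\|_{x_k}^2/(A+B)$ with $A + B > 0$. The generalized $\T^{(k-1)}$-Wolfe condition~\eqref{eq:M_Tgwolfe}, after multiplication by $s_{k-1} > 0$, sharpens the two-sided bound on $A$ to $-c_2 s_{k-1} B \le A \le c_3 s_{k-1} B$. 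Writing $\langle g_k, \eta_k\rangle_{x_k}/\|g_k\|_{x_k}^2 = -1 + \beta_k A/\|g_k\|_{x_k}^2$ and splitting again on the sign of $A$ (noting that the comparison between $\beta_k A$ and $\beta_k^{\RDY} A$ reverses when $A$ is negative), the ratio is squeezed using $A+B \ge (1-c_2 s_{k-1})B \ge (1-c_2)B$ on one side and $A+B \le (1+c_3 s_{k-1})B \le (1+c_3)B$ on the other, yielding both inequalities in~\eqref{eq:proposition_DY2}. Finally~\eqref{eq:proposition_DY3} follows immediately from~\eqref{eq:proposition_DY2} and the Cauchy--Schwarz inequality $-\langle g_k, \eta_k\rangle_{x_k} \le \|g_k\|_{x_k}\|\eta_k\|_{x_k}$, exactly as in the proof of~\cref{lem:RFR}.

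The main obstacle is bookkeeping rather than anything deep: because $\beta_{k+1}$ ranges over the whole interval $[0, \beta_{k+1}^{\RDY}]$ rather than being fixed at the endpoint, the direction of the comparison $\beta_k A$ versus $\beta_k^{\RDY} A$ flips with the sign of $A$, so each of the two bounds in~\eqref{eq:proposition_DY2} must be extracted in precisely the case (sign of $A$) in which it is the binding one. Once this case split is in place, all remaining steps reduce to the elementary manipulations sketched above.
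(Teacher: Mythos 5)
Your proof is correct and follows essentially the same route as the paper: induction for well-definedness and the descent property \eqref{eq:proposition_DY1}, then the generalized $\T^{(k-1)}$-Wolfe bounds combined with $0 \le \beta_k \le \beta_k^{\RDY}$ and a sign split on the transported inner product for \eqref{eq:proposition_DY2}, and finally Cauchy--Schwarz for \eqref{eq:proposition_DY3}. The only substantive difference is that the paper's inductive step derives the slightly stronger estimate $\langle g_{k+1}, \eta_{k+1}\rangle_{x_{k+1}} \le \beta_{k+1}\langle g_k, \eta_k\rangle_{x_k}$ (which is reused later in the proof of \cref{thm:DY}), whereas your sign-split argument yields only $\langle g_{k+1},\eta_{k+1}\rangle_{x_{k+1}} < 0$; this is enough for the proposition itself.
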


\begin{proof}
We first prove $\beta^{\RDY}_{k+1} > 0$ and~\eqref{eq:proposition_DY1} by induction.

For $k=0$, $\langle g_0, \eta_0\rangle_{x_0} = -\|g_0\|_{x_0}^2 < 0$.
If $\langle g_1, \T^{(0)}(\eta_0)\rangle_{x_0} \geq 0$, then from $s_0 > 0$, \eqref{eq:proposition_DY1} holds.
Otherwise, from $\langle g_1, \T^{(0)}(\eta_0)\rangle_{x_1}, \langle g_0, \eta_0\rangle_{x_0} < 0$, $s_0, c_2 < 1$, and~\eqref{eq:M_Twolfe}, we have $\langle g_1, s_0\T^{(0)}(\eta_0)\rangle_{x_1} > \langle g_1, \T^{(0)}(\eta_0)\rangle_{x_1} \geq c_2\langle g_0, \eta_0\rangle_{x_0} > \langle g_0, \eta_0\rangle_{x_0}$, indicating that~\eqref{eq:proposition_DY1} holds.
Moreover, \eqref{eq:proposition_DY1} directly ensures $\beta^{\RDY}_1 > 0$.

Now assume that, for some $k \ge 0$, $\beta^{\RDY}_{k+1} > 0$ and~\eqref{eq:proposition_DY1} hold.
Then, $\beta_{k+1}$ satisfying $0 \le \beta_{k+1} \le \beta^{\RDY}_{k+1}$ exists.
If $0 < \beta_{k+1} \le \beta^{\RDY}_{k+1}$, we obtain
\begin{align*}
    & \quad \langle g_{k+1}, \eta_{k+1}\rangle_{x_{k+1}} \\
    &\overset{\eqref{eq:RCG_direction}}{=} -\|g_{k+1}\|_{x_{k+1}}^2 + \beta_{k+1}\langle g_{k+1}, s_k\T^{(k)}(\eta_k)\rangle_{x_{k+1}}\\
    &= -\|g_{k+1}\|_{x_{k+1}}^2 + \beta_{k+1}(\langle g_{k+1}, s_k\T^{(k)}(\eta_k)\rangle_{x_{k+1}} - \langle g_k, \eta_k\rangle_{x_k}) + \beta_{k+1}\langle g_k, \eta_k\rangle_{x_k}\\
    &\overset{\eqref{eq:proposition_DY1}}{\le} -\|g_{k+1}\|_{x_{k+1}}^2 + \beta^{\RDY}_{k+1}(\langle g_{k+1}, s_k\T^{(k)}(\eta_k)\rangle_{x_{k+1}} - \langle g_k, \eta_k\rangle_{x_k}) + \beta_{k+1}\langle g_k, \eta_k\rangle_{x_k}\\
    &\overset{\eqref{eq:RDY}}{=} \beta_{k+1} \langle g_k, \eta_k\rangle_{x_k} < 0.
\end{align*}
If $\beta_{k+1} = 0$, then we have $\langle g_{k+1}, \eta_{k+1}\rangle_{x_{k+1}} = -\|g_{k+1}\|_{x_{k+1}}^2 < 0$.
We can also prove the inequality $\langle g_{k+1}, \eta_{k+1}\rangle_{x_{k+1}} < \langle g_{k+2}, s_{k+1}\T^{(k+1)}(\eta_{k+1})\rangle_{x_{k+2}}$ as in the previous paragraph.
Therefore, \eqref{eq:proposition_DY1} holds if $k$ is replaced with $k+1$.
Hence, \eqref{eq:proposition_DY1} is proved for all $k \ge 0$.

We proceed to prove~\eqref{eq:proposition_DY2} and~\eqref{eq:proposition_DY3} for any $k \ge 1$ with which $t_{k-1}$ satisfies the generalized $\T^{(k-1)}$-Wolfe conditions~\eqref{eq:M_armijo} and~\eqref{eq:M_Tgwolfe}.
It follows from~\eqref{eq:RCG_direction} that
\begin{equation}
\label{eq:DYproof}
    \langle g_{k}, \eta_{k}\rangle_{x_{k}} = -\|g_k\|_{x_k}^2 + \beta_k\langle g_k, s_{k-1}\T^{(k-1)}(\eta_{k-1})\rangle_{x_k}.
\end{equation}
We now prove the first inequality in~\eqref{eq:proposition_DY2}.
Here, from~\eqref{eq:M_Tgwolfe} and $s_k \le 1$, we observe that $\langle g_k, s_{k-1} \T^{(k-1)}(\eta_{k-1})\rangle_{x_k} \ge s_{k-1} c_2 \langle g_{k-1}, \eta_{k-1}\rangle_{x_{k-1}} \ge c_2 \langle g_{k-1}, \eta_{k-1}\rangle_{x_{k-1}}$ ($< 0$).
Therefore, it follows from~\eqref{eq:DYproof}, $0 \le \beta_{k} \le \beta^{\RDY}_{k}$, \eqref{eq:RDY}, and~\eqref{eq:M_Tgwolfe} that
\begin{align*}
    \langle g_k, \eta_k\rangle_{x_k}
    &\ge -\|g_k\|_{x_k}^2 + c_2 \beta_k \langle g_{k-1}, \eta_{k-1}\rangle_{x_{k-1}}\\
    &\ge -\|g_k\|_{x_k}^2 + c_2 \beta^{\RDY}_k \langle g_{k-1}, \eta_{k-1}\rangle_{x_{k-1}}\\
    &= \|g_k\|_{x_k}^2\bigg(-1 + \frac{c_2\langle g_{k-1}, \eta_{k-1}\rangle_{x_{k-1}}}{\langle g_k, s_{k-1}\T^{(k-1)}(\eta_{k-1})\rangle_{x_k} - \langle g_{k-1}, \eta_{k-1}\rangle_{x_{k-1}}}\bigg) \\
    &\ge \|g_k\|_{x_k}^2\bigg(-1 + \frac{c_2\langle g_{k-1}, \eta_{k-1}\rangle_{x_{k-1}}}{c_2\langle g_{k-1}, \eta_{k-1}\rangle_{x_{k-1}} - \langle g_{k-1}, \eta_{k-1}\rangle_{x_{k-1}}}\bigg) 
    =\frac{\|g_k\|_{x_k}^2}{c_2-1}.
\end{align*}
For the second inequality in~\eqref{eq:DYproof}, if $\langle g_k, s_{k-1}\T^{(k-1)}(\eta_{k-1})\rangle_{x_k} \le 0$, then \eqref{eq:DYproof} and $\beta_{k} \ge 0$ yield
\begin{equation*}
    \langle g_k, \eta_k\rangle_{x_k} \le -\|g_k\|_{x_k}^2 \le -\frac{1}{1+c_3}\|g_k\|_{x_k}^2.
\end{equation*}
Otherwise (i.e., if $\langle g_k, s_{k-1}\T^{(k-1)}(\eta_{k-1})\rangle_{x_k} > 0$), \eqref{eq:DYproof}, $\beta_{k} \le \beta^{\RDY}_{k}$, and~\eqref{eq:RDY} give
\begin{align*}
\langle g_k, \eta_k\rangle_{x_k} &\le -\|g_k\|_{x_k}^2 + \beta^{\RDY}_k \langle g_k, s_{k-1}\T^{(k-1)}(\eta_{k-1})\rangle_{x_k}\\
&= \frac{\|g_k\|_{x_k}^2\langle g_{k-1}, \eta_{k-1}\rangle_{x_{k-1}}}{\langle g_k, s_{k-1}\T^{(k-1)}(\eta_{k-1})\rangle_{x_k} - \langle g_{k-1}, \eta_{k-1}\rangle_{x_{k-1}}}.
\end{align*}
Noting $\|g_k\|_{x_k}^2 \langle g_{k-1}, \eta_{k-1}\rangle_{x_{k-1}} < 0$ and evaluating $\langle g_k, s_{k-1}\T^{(k-1)}(\eta_{k-1})\rangle_{x_k}$ by using~\eqref{eq:M_Tgwolfe} as $\langle g_k, s_{k-1}\T^{(k-1)}(\eta_{k-1})\rangle_{x_k} \le -c_3 \langle g_{k-1}, \eta_{k-1}\rangle_{x_{k-1}}$, we obtain
\begin{equation*}
\langle g_k, \eta_k\rangle_{x_k} < \frac{\|g_k\|_{x_k}^2}{-c_3-1},
\end{equation*}
indicating that the second inequality in~\eqref{eq:proposition_DY2} always holds.
Finally, \eqref{eq:proposition_DY3} is a direct consequence from~\eqref{eq:proposition_DY2} and the Cauchy--Schwarz inequality,
completing the proof.
\end{proof}

\begin{theorem}
\label{thm:DY}
Under \cref{assump:RCG}, let sequence $\{x_k\}$ be generated by \cref{alg:RCGgeneral} with $\T^{(k)}$ satisfying \cref{assump:Tk}.
We assume that $\beta_{k+1}$ in the algorithm satisfies $0 \le \beta_{k+1} \le \beta^{\text{\rm{\RDY}}}_{k+1}$ for all $k \ge 0$ and $t_k$ satisfies the $\T^{(k)}$-Wolfe conditions~\eqref{eq:M_armijo} and~\eqref{eq:M_Twolfe} with $0 < c_1 < c_2 < 1$.
Furthermore, assume that for $k \in K_2 - \{0\}$, $t_{k-1}$ satisfies the generalized $\T^{(k-1)}$-Wolfe conditions~\eqref{eq:M_armijo} and~\eqref{eq:M_Tgwolfe} with $0 < c_1 < c_2 < 1$ and $c_3 \geq 0$, where $\beta^{\text{\rm{\RDY}}}_{k+1}$ is defined as~\eqref{eq:RDY}, and $K_2$ is the index set in \cref{assump:Tk}.
Then, we have
\begin{equation}
\label{eq:lim_RDY}
\liminf_{k\to\infty}\|\grad f(x_k)\|_{x_k}=0.
\end{equation}
\end{theorem}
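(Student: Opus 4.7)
The plan is to follow the Dai--Yuan strategy: combine \cref{lem:RDY}, Zoutendijk's theorem (\cref{thm:zouten}), and a contradiction argument based on a one-step recursion for $\|\eta_k\|_{x_k}^{2}/\langle g_k,\eta_k\rangle_{x_k}^{2}$, adapted to the relaxed bound $0\le\beta_{k+1}\le\beta^{\RDY}_{k+1}$ and the abstract transport $\T^{(k)}$.

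First I would verify the hypotheses of \cref{thm:zouten}. \cref{lem:RDY} gives $\langle g_k,\eta_k\rangle_{x_k}<0$ for every $k$. For each $k\in K_2-\{0\}$ the generalized $\T^{(k-1)}$-Wolfe conditions are assumed at step $k-1$, so \eqref{eq:proposition_DY3} yields $\|g_k\|_{x_k}\le(1+c_3)\|\eta_k\|_{x_k}$; for the possible remaining index $k=0\in K_2$, the relation $\eta_0=-g_0$ yields $\|g_0\|_{x_0}=\|\eta_0\|_{x_0}$. Hence $\|g_k\|_{x_k}\le\mu\|\eta_k\|_{x_k}$ on $K_2$ with $\mu:=\max\{1,1+c_3\}$, so \cref{thm:zouten} applies and gives $\sum_{k=0}^{\infty}\langle g_k,\eta_k\rangle_{x_k}^{2}/\|\eta_k\|_{x_k}^{2}<\infty$.

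Next I would argue by contradiction. Suppose $\|g_k\|_{x_k}\ge\gamma$ for some $\gamma>0$ and all $k$. Rewriting \eqref{eq:RCG_direction} as $\beta_{k+1}s_k\T^{(k)}(\eta_k)=\eta_{k+1}+g_{k+1}$, taking squared norms, and using $s_k\|\T^{(k)}(\eta_k)\|_{x_{k+1}}\le\|\eta_k\|_{x_k}$ from \eqref{eq:sk} gives
\[
\|\eta_{k+1}\|_{x_{k+1}}^{2}\le\beta_{k+1}^{2}\|\eta_k\|_{x_k}^{2}-2\langle g_{k+1},\eta_{k+1}\rangle_{x_{k+1}}-\|g_{k+1}\|_{x_{k+1}}^{2}.
\]
The critical algebraic step is to bound $\beta_{k+1}^{2}/\langle g_{k+1},\eta_{k+1}\rangle_{x_{k+1}}^{2}$ by $1/\langle g_k,\eta_k\rangle_{x_k}^{2}$. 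Taking the inner product of \eqref{eq:RCG_direction} with $g_{k+1}$ gives $\beta_{k+1}\langle g_{k+1},s_k\T^{(k)}(\eta_k)\rangle_{x_{k+1}}=\langle g_{k+1},\eta_{k+1}\rangle_{x_{k+1}}+\|g_{k+1}\|_{x_{k+1}}^{2}$; combining with the inequality $\beta_{k+1}(\langle g_{k+1},s_k\T^{(k)}(\eta_k)\rangle_{x_{k+1}}-\langle g_k,\eta_k\rangle_{x_k})\le\|g_{k+1}\|_{x_{k+1}}^{2}$, which is equivalent to $\beta_{k+1}\le\beta^{\RDY}_{k+1}$ since the denominator is positive by \eqref{eq:proposition_DY1}, produces $\langle g_{k+1},\eta_{k+1}\rangle_{x_{k+1}}\le\beta_{k+1}\langle g_k,\eta_k\rangle_{x_k}$. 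Both sides being nonpositive and $\beta_{k+1}\ge 0$, squaring preserves the inequality and yields the desired bound.

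Dividing the displayed inequality by $\langle g_{k+1},\eta_{k+1}\rangle_{x_{k+1}}^{2}$, substituting this bound, and using the completing-the-square identity $-2/a-b^{2}/a^{2}=1/b^{2}-(1/b+b/a)^{2}\le 1/b^{2}$ (with $a=\langle g_{k+1},\eta_{k+1}\rangle_{x_{k+1}}$ and $b=\|g_{k+1}\|_{x_{k+1}}$) gives the one-step recursion
\[
\frac{\|\eta_{k+1}\|_{x_{k+1}}^{2}}{\langle g_{k+1},\eta_{k+1}\rangle_{x_{k+1}}^{2}}\le\frac{\|\eta_k\|_{x_k}^{2}}{\langle g_k,\eta_k\rangle_{x_k}^{2}}+\frac{1}{\|g_{k+1}\|_{x_{k+1}}^{2}}.
\]
Iterating from $k=0$, where the initial ratio equals $1/\|g_0\|_{x_0}^{2}$ because $\eta_0=-g_0$, and invoking $\|g_j\|_{x_j}\ge\gamma$ gives $\|\eta_k\|_{x_k}^{2}/\langle g_k,\eta_k\rangle_{x_k}^{2}\le(k+1)/\gamma^{2}$, whence $\sum_{k}\langle g_k,\eta_k\rangle_{x_k}^{2}/\|\eta_k\|_{x_k}^{2}\ge\sum_{k}\gamma^{2}/(k+1)=\infty$, contradicting the Zoutendijk sum. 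The hard part is the inequality $\langle g_{k+1},\eta_{k+1}\rangle_{x_{k+1}}\le\beta_{k+1}\langle g_k,\eta_k\rangle_{x_k}$: in the classical argument this is an equality that relies on $\beta=\beta^{\RDY}$, whereas here it must be extracted from the one-sided bound $\beta_{k+1}\le\beta^{\RDY}_{k+1}$ combined with \eqref{eq:RCG_direction}, and its direction must be preserved when squaring, which in turn uses the descent property from \cref{lem:RDY}.
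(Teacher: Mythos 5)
Your proposal is correct and follows essentially the same route as the paper's proof: the key inequality $\langle g_{k+1},\eta_{k+1}\rangle_{x_{k+1}}\le\beta_{k+1}\langle g_k,\eta_k\rangle_{x_k}\le 0$ extracted from $0\le\beta_{k+1}\le\beta^{\RDY}_{k+1}$ (established in the proof of \cref{lem:RDY}), the same squared-norm recursion with the completing-the-square bound, the telescoping estimate $\|\eta_k\|_{x_k}^2/\langle g_k,\eta_k\rangle_{x_k}^2\le (k+1)/\gamma^2$, and the contradiction with Zoutendijk's theorem. Your explicit treatment of the index $k=0\in K_2$ when verifying the hypothesis $\|g_k\|_{x_k}\le\mu\|\eta_k\|_{x_k}$ is a small point of added care that the paper leaves implicit.
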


\begin{proof}
It is sufficient to show~\eqref{eq:lim_RDY} for the case $g_k \neq 0$ for all $k \ge 0$.
From~\eqref{eq:RCG_direction}, we have $\eta_{k+1} + g_{k+1} = \beta_{k+1}s_k\T^{(k)}(\eta_k)$.
Taking the norm and squaring, we obtain
\begin{equation}
\label{eq:thmRDY}
    \|\eta_{k+1}\|_{x_{k+1}}^2 = \beta_{k+1}^2 s_k^2 \|\T^{(k)}(\eta_k)\|_{x_{k+1}}^2 -2\langle g_{k+1}, \eta_{k+1}\rangle_{x_{k+1}} - \|g_{k+1}\|_{x_{k+1}}^2.
\end{equation}
In the proof of \cref{lem:RDY}, we showed $\langle g_{k+1}, \eta_{k+1}\rangle_{x_{k+1}} \le \beta_{k+1} \langle g_k, \eta_k\rangle_{x_k} \le 0$ (the two equal signs do not hold simultaneously).
Thus, $\beta_{k+1}^2 \langle g_k, \eta_k\rangle_{x_k}^2 \le \langle g_{k+1}, \eta_{k+1}\rangle_{x_{k+1}}^2$.
Dividing both sides of~\eqref{eq:thmRDY} by $\langle g_{k+1}, \eta_{k+1}\rangle_{x_{k+1}}^2 > 0$, we obtain
\begin{align}
    \frac{\|\eta_{k+1}\|_{x_{k+1}}^2}{\langle g_{k+1}, \eta_{k+1}\rangle_{x_{k+1}}^2} &\le  \frac{s_k^2\|\T^{(k)}(\eta_k)\|_{x_{k+1}}^2}{\langle g_{k}, \eta_{k}\rangle_{x_{k}}^2}- \frac{2}{\langle g_{k+1}, \eta_{k+1}\rangle_{x_{k+1}}} - \frac{\|g_{k+1}\|_{x_{k+1}}^2}{\langle g_{k+1}, \eta_{k+1}\rangle_{x_{k+1}}^2}\notag\\
    &\overset{\eqref{eq:sk}}{\le} \frac{\|\eta_k\|_{x_k}^2}{\langle g_k, \eta_k\rangle_{x_k}^2} + \frac{1}{\|g_{k+1}\|_{x_{k+1}}^2} - \bigg(\frac{1}{\|g_{k+1}\|_{x_{k+1}}} + \frac{\|g_{k+1}\|_{x_{k+1}}}{\langle g_{k+1}, \eta_{k+1}\rangle_{x_{k+1}}}\bigg)^2\notag\\
    &\le \frac{\|\eta_k\|_{x_k}^2}{\langle g_k, \eta_k\rangle_{x_k}^2} + \frac{1}{\|g_{k+1}\|_{x_{k+1}}^2}.\label{eq:thmRDY2}
\end{align}

To accomplish the proof by contradiction,
we assume $\liminf_{k\to\infty}\|g_k\|_{x_k} > 0$, which, together with $g_k \neq 0$ for all $k \geq 0$, implies that there exists $\varepsilon > 0$ such that $\|g_k\|_{x_k} \geq \varepsilon$ for all $k \ge 0$.
Therefore, from~\eqref{eq:thmRDY2}, we obtain
\begin{equation*}
    \frac{\|\eta_k\|_{x_k}^2}{\langle g_k, \eta_k\rangle_{x_k}^2} \le \frac{\|\eta_0\|_{x_0}^2}{\langle g_0, \eta_0\rangle_{x_0}^2} + \sum_{j=1}^{k}\frac{1}{\|g_j\|_{x_j}^2} = \sum_{j=0}^k \frac{1}{\|g_j\|_{x_j}^2} \le \frac{k+1}{\varepsilon^2},
\end{equation*}
which gives
\begin{equation}
\label{eq:proposition_DY4}
    \sum_{k=0}^N\frac{\langle g_k, \eta_k\rangle_{x_k}^2}{\|\eta_k\|_{x_k}^2} \geq \varepsilon^2\sum_{k=1}^{N+1}\frac{1}{k} \to \infty
\end{equation}
as $N \to \infty$.
On the other hand, \cref{lem:RDY} indicates that the assumption in \cref{thm:zouten} (Zoutendijk's theorem) holds, and we have~\eqref{eq:zouten}, contradicting~\eqref{eq:proposition_DY4}.
Therefore, we deduce that~\eqref{eq:lim_RDY} must hold, completing the proof.
\end{proof}

The DY-type of R-CG methods have an advantage over the FR-types in that they do not require the strong $\T^{(k)}$-Wolfe conditions.
Although the generalized $\T^{(k)}$-Wolfe conditions are required for $k$ such that \eqref{eq:TkDR2} does not hold, the constant $c_3 \geq 0$ can be taken as any large constant.
Therefore, the conditions are not too restrictive and much weaker than the strong $\T^{(k)}$-Wolfe conditions.

\subsubsection{R-CG methods with $\beta^{\RCD}$}
\label{subsubsec:CD}
For $\beta^{\RCD}_{k+1} := - \|g_{k+1}\|_{x_{k+1}}^2 / \langle  g_k, \eta_k\rangle_{x_k}$, defined as~\eqref{eq:RCD}, the following proposition is crucial.
\begin{proposition}
\label{prop:CD}
Let sequence $\{x_k\}$ be generated by \cref{alg:RCGgeneral} with $\beta_{k+1}$ satisfying $0 \le \beta_{k+1} \le \beta_{k+1}^{\text{\rm{\RCD}}}$, where $\beta_{k+1}^{\text{\rm{\RCD}}}$ is defined as~\eqref{eq:RCD}.
Assume that, for all $k \ge 0$, $\grad f(x_k) \neq 0$ and step lengths $t_k$ satisfy the $\T^{(k)}$-generalized Wolfe conditions~\eqref{eq:M_armijo} and~\eqref{eq:M_Tgwolfe} with $c_3 = 0$.
Then, the algorithm is well-defined, i.e., for all $k \ge 0$, $\beta_{k+1}^{\text{\rm{\RCD}}} > 0$; thus, $\beta_{k+1}$ satisfying $0 \le \beta_{k+1} \le \beta_{k+1}^{\text{\rm{\RCD}}}$ exists.
Furthermore, we have the sufficient descent condition on $\eta_k$ as $\langle \grad f(x_k), \eta_k\rangle_{x_k} \le -\|\grad f(x_k)\|_{x_k}^2$ and $0 \le \beta_{k+1} \le \beta^{\text{\rm{\RFR}}}_{k+1}$ for all $k \ge 0$, where $\beta^{\text{\rm{\RFR}}}_{k+1}$ is defined as~\eqref{eq:RFR}.
In particular, $0 < \beta^{\text{\rm{\RCD}}}_{k+1} \le \beta^{\text{\rm{\RFR}}}_{k+1}$ holds.
\end{proposition}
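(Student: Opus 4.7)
The plan is to prove all four assertions simultaneously by induction on $k$, using as the inductive hypothesis the sufficient descent inequality $\langle g_k, \eta_k\rangle_{x_k} \le -\|g_k\|_{x_k}^2$. Once this is in hand at step $k$, well-definedness of $\beta^{\RCD}_{k+1}$, the bound $\beta^{\RCD}_{k+1} \le \beta^{\RFR}_{k+1}$, and the propagation of the sufficient descent to step $k+1$ should all fall out quickly.

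The base case $k=0$ is immediate: since $\eta_0 = -g_0$ we have $\langle g_0, \eta_0\rangle_{x_0} = -\|g_0\|_{x_0}^2$, so the sufficient descent holds with equality, and this in turn makes the denominator of $\beta^{\RCD}_1 = -\|g_1\|_{x_1}^2 / \langle g_0, \eta_0\rangle_{x_0}$ strictly negative, hence $\beta^{\RCD}_1 > 0$. Moreover, replacing $\langle g_0, \eta_0\rangle_{x_0}$ by $-\|g_0\|_{x_0}^2$ in the denominator does not change anything, so $\beta^{\RCD}_1 = \beta^{\RFR}_1$ in this first step. In particular any $\beta_1 \in [0, \beta^{\RCD}_1]$ also lies in $[0, \beta^{\RFR}_1]$.

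For the inductive step, assume $\langle g_k, \eta_k\rangle_{x_k} \le -\|g_k\|_{x_k}^2 < 0$. Then the denominator of $\beta^{\RCD}_{k+1}$ in~\eqref{eq:RCD} is strictly positive, so $\beta^{\RCD}_{k+1} > 0$; moreover
\begin{equation*}
\beta^{\RCD}_{k+1} = \frac{\|g_{k+1}\|_{x_{k+1}}^2}{-\langle g_k, \eta_k\rangle_{x_k}} \le \frac{\|g_{k+1}\|_{x_{k+1}}^2}{\|g_k\|_{x_k}^2} = \beta^{\RFR}_{k+1},
\end{equation*}
which establishes $0 < \beta^{\RCD}_{k+1} \le \beta^{\RFR}_{k+1}$ and hence, for any admissible $\beta_{k+1}$, the inclusion $0 \le \beta_{k+1} \le \beta^{\RFR}_{k+1}$. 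To propagate the sufficient descent, substitute the update formula~\eqref{eq:RCG_direction} to get
\begin{equation*}
\langle g_{k+1}, \eta_{k+1}\rangle_{x_{k+1}} = -\|g_{k+1}\|_{x_{k+1}}^2 + \beta_{k+1} s_k \langle g_{k+1}, \T^{(k)}(\eta_k)\rangle_{x_{k+1}}.
\end{equation*}
The crucial observation is that the right inequality in the generalized $\T^{(k)}$-Wolfe condition~\eqref{eq:M_Tgwolfe} with $c_3 = 0$ yields $\langle g_{k+1}, \T^{(k)}(\eta_k)\rangle_{x_{k+1}} \le 0$. Combined with $\beta_{k+1} \ge 0$ and $s_k > 0$, the correction term is nonpositive, giving $\langle g_{k+1}, \eta_{k+1}\rangle_{x_{k+1}} \le -\|g_{k+1}\|_{x_{k+1}}^2$ as required.

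I do not anticipate a serious obstacle here: the main point is to identify the role of $c_3 = 0$, which is precisely what forces the transported search direction to have a nonpositive inner product with the new gradient. Without $c_3 = 0$ one would only have a sign-indefinite bound on $\langle g_{k+1}, \T^{(k)}(\eta_k)\rangle_{x_{k+1}}$, and the correction term could destroy the clean $-\|g_{k+1}\|_{x_{k+1}}^2$ bound. Once this is recognized, the induction is essentially automatic, and the remaining conclusions (well-definedness of $\beta^{\RCD}_{k+1}$, $\beta_{k+1} \le \beta^{\RFR}_{k+1}$, and $\beta^{\RCD}_{k+1} \le \beta^{\RFR}_{k+1}$) are immediate consequences of the inductive hypothesis alone.
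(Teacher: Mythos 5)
Your proof is correct and follows essentially the same route as the paper's: an induction on the sufficient descent inequality, with the key observation that the generalized $\T^{(k)}$-Wolfe condition with $c_3=0$ forces $\langle g_{k+1}, \T^{(k)}(\eta_k)\rangle_{x_{k+1}} \le 0$, so that the correction term $\beta_{k+1}s_k\langle g_{k+1}, \T^{(k)}(\eta_k)\rangle_{x_{k+1}}$ is nonpositive. The paper merely phrases the inductive step by dividing through by $\|g_{k+1}\|_{x_{k+1}}^2$ and rewriting the correction term via $\beta^{\textrm{R-CD}}_{k+1}$, which is an algebraically equivalent presentation of your argument.
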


\begin{proof}
We first show $\langle g_k, \eta_k\rangle_{x_k} \le -\|g_k\|_{x_k}^2$ by induction.
If this inequality holds, then $\beta^{\RCD}_{k+1} > 0$ directly follows from $\langle g_k, \eta_k\rangle_{x_k} < 0$, and $\beta_{k+1} \in [0, \beta^{\RCD}_{k+1}]$ actually exists.
For $k=0$, it is clear that $\langle g_0, \eta_0\rangle_{x_0} = -\|g_0\|_{x_0}^2$.
Subsequently, we assume that $\langle g_k, \eta_k\rangle_{x_k} \le -\|g_k\|_{x_k}^2$ ($< 0$) for some $k \ge 0$.
Then, considering the inequality $\langle g_{k+1}, \T^{(k)}(\eta_k)\rangle_{x_{k+1}} \le 0$ from~\eqref{eq:M_Tgwolfe} with $c_3 = 0$, we use~\eqref{eq:RCG_direction} and~\eqref{eq:RCD} to obtain
\begin{align*}
    \frac{\langle g_{k+1}, \eta_{k+1}\rangle_{x_{k+1}}}{\|g_{k+1}\|_{x_{k+1}}^2}
    &= \frac{\langle g_{k+1}, -g_{k+1} + \beta_{k+1}s_k\T^{(k)}(\eta_k)\rangle_{x_{k+1}}}{\|g_{k+1}\|_{x_{k+1}}^2}\\
    &= -1 - s_k\frac{\beta_{k+1}}{\beta^{\RCD}_{k+1}}\frac{\langle g_{k+1}, \T^{(k)}(\eta_k)\rangle_{x_{k+1}}}{\langle g_{k}, \eta_k\rangle_{x_{k}}} \le -1.
\end{align*}
Thus, $\langle g_{k+1}, \eta_{k+1}\rangle_{x_{k+1}} \le -\|g_{k+1}\|_{x_{k+1}}^2$ holds as desired, and by induction, we have $\langle g_k, \eta_k\rangle_{x_k} \le -\|g_k\|_{x_k}^2$ for all $k \ge 0$.

Furthermore, this result directly leads to
\begin{equation*}
    \beta^{\RCD}_{k+1} = \frac{\|g_{k+1}\|_{x_{k+1}}^2}{-\langle g_k, \eta_k\rangle_{x_k}} \le \frac{\|g_{k+1}\|_{x_{k+1}}^2}{\|g_k\|_{x_k}^2} = \beta^{\RFR}_{k+1}.
\end{equation*}
Combining this relationship with $0 \le \beta_{k+1} \le \beta^{\RCD}_{k+1}$ yields $0 \le \beta_{k+1} \le \beta^{\RFR}_{k+1}$.
\end{proof}

As a special case of the result in \cref{prop:CD}, we have $|\beta^{\RCD}_{k+1}| \le \beta^{\RFR}_{k+1}$ when we choose $\beta_{k+1} \equiv \beta^{\RCD}_{k+1}$ for all $k \ge 0$.
Therefore, as a corollary of \cref{thm:FR}, we obtain the convergence result for $\beta^{\RCD}_{k+1}$, requiring $t_k$ to satisfy the generalized $\T^{(k)}$-Wolfe conditions with $0<c_1<c_2<1/2$ and $c_3 = 0$.
Furthermore, in fact, as the following theorem sates, the condition on $c_2$ can be weaken as $0 < c_1 < c_2 < 1$, and $\beta_{k+1}$ can be any value satisfying $0 \le \beta_{k+1} \le \beta^{\RCD}_{k+1}$.

\begin{theorem}
\label{thm:CD}
Under \cref{assump:RCG}, let sequence $\{x_k\}$ be generated by \cref{alg:RCGgeneral} with $\beta_{k+1}$ satisfying $0 \le \beta_{k+1} \le \beta^{\text{\rm{\RCD}}}_{k+1}$, where $\beta_{k+1}^{\text{\rm{\RCD}}}$ is defined as~\eqref{eq:RCD}.
If $\T^{(k)}$ satisfies \cref{assump:Tk} and the step lengths satisfy the generalized $\mathscr{T}^{(k)}$-Wolfe conditions~\eqref{eq:M_armijo} and~\eqref{eq:M_Tgwolfe} with $0 < c_1 < c_2 < 1$ and $c_3 = 0$, then we have
\begin{equation*}
\label{eq:lim_RCD}
\liminf_{k\to\infty}\|\grad f(x_k)\|_{x_k}=0.
\end{equation*}
\end{theorem}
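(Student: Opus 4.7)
The plan is to mirror the proof of Theorem~\ref{thm:FR}, combining the sufficient descent property supplied by Proposition~\ref{prop:CD} with Zoutendijk's theorem, but exploiting the stronger descent estimate available in the CD case to remove the restriction $c_2 < 1/2$.

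First, under the hypotheses, Proposition~\ref{prop:CD} applies and yields the sufficient descent bound $\langle \grad f(x_k), \eta_k\rangle_{x_k} \le -\|\grad f(x_k)\|_{x_k}^2$ together with $0 \le \beta_{k+1} \le \beta_{k+1}^{\RFR}$ for every $k \ge 0$. Cauchy--Schwarz then gives $\|\grad f(x_k)\|_{x_k} \le \|\eta_k\|_{x_k}$, so the auxiliary hypothesis in Theorem~\ref{thm:zouten} holds with $\mu = 1$ (in particular for every $k \in K_2$). Since the generalized $\T^{(k)}$-Wolfe conditions with $c_3 = 0$ trivially imply the $\T^{(k)}$-Wolfe conditions, Zoutendijk's theorem applies and delivers $\sum_{k=0}^\infty \langle \grad f(x_k), \eta_k\rangle_{x_k}^2/\|\eta_k\|_{x_k}^2 < \infty$.

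Next, reducing to the case $\grad f(x_k)\neq 0$ for all $k$ as usual and assuming for contradiction that $\|\grad f(x_k)\|_{x_k}\ge \varepsilon>0$ for all $k$, I expand $\|\eta_{k+1}\|_{x_{k+1}}^2$ via~\eqref{eq:RCG_direction}. The key input is that the generalized $\T^{(k)}$-Wolfe condition with $c_3 = 0$ confines $\langle g_{k+1},\T^{(k)}(\eta_k)\rangle_{x_{k+1}}$ to the interval $[c_2\langle g_k,\eta_k\rangle_{x_k},\,0]$, so the cross term is controlled by
\begin{equation*}
-2\beta_{k+1}s_k\langle g_{k+1},\T^{(k)}(\eta_k)\rangle_{x_{k+1}}
\le -2c_2\beta_{k+1}^{\RCD}\langle g_k,\eta_k\rangle_{x_k}
= 2c_2\|\grad f(x_{k+1})\|_{x_{k+1}}^2,
\end{equation*}
using $0\le \beta_{k+1}\le \beta_{k+1}^{\RCD}$ and $0<s_k\le 1$. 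Combining this with $s_k^2\|\T^{(k)}(\eta_k)\|_{x_{k+1}}^2\le \|\eta_k\|_{x_k}^2$ from~\eqref{eq:sk} and with $\beta_{k+1}\le \beta_{k+1}^{\RFR}$ from Proposition~\ref{prop:CD} gives the clean recurrence
\begin{equation*}
\|\eta_{k+1}\|_{x_{k+1}}^2
\le (1+2c_2)\,\|\grad f(x_{k+1})\|_{x_{k+1}}^2 + (\beta_{k+1}^{\RFR})^2\,\|\eta_k\|_{x_k}^2.
\end{equation*}

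Finally, iterating this recurrence in the same way as in the proof of Theorem~\ref{thm:FR} and using $\|g_j\|_{x_j}\ge \varepsilon$ produces a bound of the form $\|\eta_k\|_{x_k}^2 \le \tilde{c}\,\|\grad f(x_k)\|_{x_k}^4(k+1)/\varepsilon^2$ for some constant $\tilde c>0$; the sufficient descent estimate then forces $\sum_j\langle g_j,\eta_j\rangle_{x_j}^2/\|\eta_j\|_{x_j}^2 \ge (\varepsilon^2/\tilde c)\sum_j 1/(j+1) = \infty$, contradicting Zoutendijk's theorem. The main point, and the reason this argument avoids the restriction $c_2<1/2$ inherent in Theorem~\ref{thm:FR}, is that the coefficient $1+2c_2$ stays bounded for every $c_2\in(0,1)$, so no analogue of Proposition~\ref{lem:RFR} (whose upper bound on $\langle g_k,\eta_k\rangle_{x_k}/\|g_k\|_{x_k}^2$ only becomes negative for $c_2<1/2$) is needed here; Proposition~\ref{prop:CD}'s unconditional sufficient descent bound is precisely what makes this relaxation possible, so beyond careful bookkeeping of the CD-specific inequalities there is no substantive obstacle.
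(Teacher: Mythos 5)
Your proof is correct and follows essentially the same route as the paper's: sufficient descent from the $c_3=0$ generalized Wolfe condition (via \cref{prop:CD}), Zoutendijk's theorem with $\mu=1$, and then the recurrence-plus-harmonic-series contradiction from the proof of \cref{thm:FR}, with the coefficient of $\langle g_j,\eta_j\rangle^2/\|g_j\|_{x_j}^4$ now bounded below by $1$ rather than $(1-2c_2)^2/(1-c_2)^2$, which is exactly why $c_2<1/2$ is no longer needed. The only cosmetic difference is that you bound the cross term via $\beta_{k+1}\le\beta_{k+1}^{\RCD}$ to get the constant $1+2c_2$, whereas the paper reuses the \cref{thm:FR} estimate verbatim with $c=(1+c_2)/(1-c_2)$; both are valid.
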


\begin{proof}
Following the discussion in the proof of \cref{lem:RFR}, we can prove $\displaystyle-\frac{1}{1-c_2} \le \frac{\langle g_k, \eta_k\rangle_{x_k}}{\|g_k\|_{x_k}^2} \le -1$
and
$\|g_k\|_{x_k} \le \|\eta_k\|_{x_k}$ instead of~\eqref{eq:proposition_FR} and~\eqref{eq:proposition_FR2}, respectively.
Therefore, the assumption in Zoutendijk's theorem (\cref{thm:zouten}) is satisfied, and the subsequent proof is completely the same as that of \cref{thm:FR}.
\end{proof}

The discussion on FR- and DY-types of R-CG methods here is partly similar to that in previous studies, where some specific choices of $s_k$ and $\T^{(k)}$ are utilized.
However, the analyses provided in this section are meaningful and not trivial since they address our general framework of R-CG methods (i.e., \cref{alg:RCGgeneral}) and more general classes of $\beta_{k+1}$.
Furthermore, to the author's knowledge, no discussion on the CD-type of R-CG method has been conducted, even for specific $\T^{(k)}$ such as parallel translation or vector transport.

\subsection{Global convergence analyses of R-CG methods with variants of \boldmath$\beta^{\RPRP}$, $\beta^{\RHS}$, and $\beta^{\RLS}$}
\label{subsec:PRP_HS_LS}

While some existing studies discuss the FR- and DY-types of R-CG methods, the theoretical properties of the PRP-, HS-, and LS-types of R-CG methods are not well known until now.
Furthermore, even in Euclidean spaces, these three types of CG methods with the (strong) Wolfe conditions are not generally guaranteed to converge.
Therefore, various variants are proposed and analyzed.
For example, $\beta_{k+1}^{\PRP+} := \max\{\beta_{k+1}^{\PRP}, 0\}$ is known to generate convergent sequences under some assumptions in the Euclidean case~\cite{gilbert1992global}.
Some comprehensive surveys on the Euclidean CG methods are found in~\cite{andrei2008conjugate,andrei2020nonlinear,hager2006survey,narushima2014survey}.
In this subsection, we generalize some examples of such variants by exploiting the theoretical results in \cref{subsec:FR_CD_DY}.

Considering the Euclidean CG methods, an important feature of the PRP-, \mbox{HS-,} and LS-types is that they can avoid jamming, which may occur in the FR-, DY-, and CD-types.
This is because the quantity in the numerator of $\beta^{\PRP}_{k+1}$, $\beta^{\HS}_{k+1}$, and $\beta^{\LS}_{k+1}$ becomes close to $0$ when $x_{k+1} \approx x_k$, and the search direction is almost the steepest descent direction $-\nabla f(x_{k+1})$.
This phenomenon can also be explained in the R-CG methods, assuming that $l_k \S^{(k)}(g_k) \approx g_k$ when $x_{k+1} \approx x_k$.
For example, consider $\beta^{\RPRP}_{k+1} := (\|g_{k+1}\|_{x_{k+1}}^2 - \langle g_{k+1}, l_k \S^{(k)}(g_k)\rangle_{x_{k+1}}) / \|g_k\|_{x_k}^2$ as defined in~\eqref{eq:RPRP}.
If $x_{k+1} \approx x_k$, then $g_{k+1} \approx g_k$ from the continuity of $\grad f$, and the numerator is approximated as $\|g_{k+1}\|_{x_{k+1}}^2 - \langle g_{k+1}, l_k \S^{(k)}(g_k)\rangle_{x_{k+1}} \approx \|g_k\|_{x_k}^2 - \langle g_k, g_k\rangle_{x_k} = 0$.
Therefore, the subsequent search direction in \cref{alg:RCGgeneral} is $\eta_{k+1} \approx -g_{k+1}$, which is the negative gradient of $f$ at $x_{k+1}$.
Therefore, the PRP-type of R-CG methods are equipped with an automatic restart strategy, indicating that the search direction is almost reset as the steepest descent direction when $x_{k+1} \approx x_k$.
The same is also applied to the HS- and LS-types of R-CG methods.

As mentioned, although the PRP-, HS-, and LS-types of (R-)CG methods may be practically superior to the FR-, DY-, and CD-types, they do not necessarily generate convergent sequences.
Here, we observe that $\beta^{\FR}_{k+1}$ and $\beta^{\PRP}_{k+1}$ have the same form of denominator.
Therefore, the PRP-type R-CG methods can be regarded as practically modified versions of the FR-type R-CG methods so that they have the aforementioned restart mechanism, while the FR-types have theoretically better convergence properties than the PRP-types.
Based on this discussion, a natural modification of $\beta^{\RPRP}_{k+1}$ is $\beta_{k+1} = \max\{0, \min\{\beta^{\RPRP}_{k+1}, \beta^{\RFR}_{k+1}\}\}$,
which ensures the condition $0 \le \beta_{k+1} \le \beta^{\RFR}_{k+1}$ in \cref{thm:FR}.
We can develop this discussion for HS--DY- and CD--LS-types.

\subsubsection{R-CG methods with modified \boldmath$\beta^{\RPRP}$}
\label{subsubsec:PRP}
Based on the above discussion, a practical implementation of the PRP-type $\beta_{k+1}$ defined in~\eqref{eq:RPRP}, which is $\beta^{\RPRP}_{k+1} := (\|g_{k+1}\|_{x_{k+1}}^2 - \langle g_{k+1}, l_k \S^{(k)}(g_k)\rangle_{x_{k+1}}) / \|g_k\|_{x_k}^2$, may be to combine it with $\beta^{\RFR}_{k+1}$ as $\beta_{k+1} = \max\{0, \min\{\beta^{\RPRP}_{k+1}, \beta^{\RFR}_{k+1}\}\}$.
The Euclidean version of this approach (i.e., $\beta_{k+1} = \max\{0, \min\{\beta^{\PRP}_{k+1}, \beta^{\FR}_{k+1}\}\}$) is mentioned in~\cite{hu1991global}.
Because $0 \le \beta_{k+1} \le \beta^{\RFR}_{k+1}$ holds, \cref{thm:FR} implies the following result.

\begin{theorem}
\label{thm:PRP}
Let $\{x_k\}$ be a sequence generated by \cref{alg:RCGgeneral} under the assumption in \cref{thm:FR} and $\beta_{k+1} = \beta_{k+1}^{\text{\rm{\RPRPFR}}} := \max\{0, \min\{\beta^{\text{\rm{\RPRP}}}_{k+1}, \beta^{\text{\rm{\RFR}}}_{k+1}\}\}$.
Then, we have $\liminf_{k \to \infty}\|\grad f(x_k)\|_{x_k} = 0$.
\end{theorem}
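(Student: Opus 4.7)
The plan is to reduce this to \cref{thm:FR} by verifying that the prescribed $\beta_{k+1}^{\RPRPFR}$ falls within the class of coefficients already analyzed there. Specifically, the hypotheses of \cref{thm:FR} (which is the FR-type convergence result) require only that $|\beta_{k+1}| \le \beta_{k+1}^{\RFR}$, together with \cref{assump:RCG}, \cref{assump:Tk}, and the strong $\mathscr{T}^{(k)}$-Wolfe conditions with $0 < c_1 < c_2 < 1/2$. Since the assumptions of \cref{thm:PRP} explicitly quote those of \cref{thm:FR}, all that remains is the algebraic inequality $0 \le \beta_{k+1}^{\RPRPFR} \le \beta_{k+1}^{\RFR}$.

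First I would observe that $\beta_{k+1}^{\RFR} = \|g_{k+1}\|_{x_{k+1}}^2 / \|g_k\|_{x_k}^2 \ge 0$ (with strict positivity whenever $g_{k+1} \neq 0$, which is the case while the algorithm is running). Next, by definition of the max--min operation, $\beta_{k+1}^{\RPRPFR} \ge 0$ trivially, so the lower bound is immediate. For the upper bound I would split into two cases: if $\min\{\beta_{k+1}^{\RPRP}, \beta_{k+1}^{\RFR}\} \le 0$, then $\beta_{k+1}^{\RPRPFR} = 0 \le \beta_{k+1}^{\RFR}$; otherwise $\beta_{k+1}^{\RPRPFR} = \min\{\beta_{k+1}^{\RPRP}, \beta_{k+1}^{\RFR}\} \le \beta_{k+1}^{\RFR}$. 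Combining the two bounds yields $|\beta_{k+1}^{\RPRPFR}| = \beta_{k+1}^{\RPRPFR} \le \beta_{k+1}^{\RFR}$.

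With this inequality in hand, \cref{thm:FR} applies verbatim to the sequence generated with $\beta_{k+1} = \beta_{k+1}^{\RPRPFR}$, and the conclusion $\liminf_{k \to \infty} \|\grad f(x_k)\|_{x_k} = 0$ follows at once. There is essentially no obstacle here: the proof is a one-line verification followed by a citation. The interesting content is not in the argument itself but in the design of the formula $\beta_{k+1}^{\RPRPFR}$, which, as motivated in the surrounding discussion, is intended to preserve the automatic-restart behavior of the PRP-type while retaining the FR-type's theoretical safeguard $\beta_{k+1} \le \beta_{k+1}^{\RFR}$.
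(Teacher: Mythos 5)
Your proposal is correct and matches the paper's argument exactly: the paper likewise notes that $0 \le \beta_{k+1}^{\RPRPFR} \le \beta_{k+1}^{\RFR}$ by construction of the max--min formula and then invokes \cref{thm:FR}, which was deliberately stated for the whole class $|\beta_{k+1}| \le \beta_{k+1}^{\RFR}$ precisely so this corollary would be immediate. Your explicit case split on the sign of $\min\{\beta_{k+1}^{\RPRP},\beta_{k+1}^{\RFR}\}$ is just a slightly more detailed write-up of the same one-line verification.
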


\subsubsection{R-CG methods with modified \boldmath$\beta^{\RHS}$}
\label{subsubsec:HS}
In~\eqref{eq:RHS}, $\beta^{\RHS}_{k+1} := (\|g_{k+1}\|_{x_{k+1}}^2 - \langle g_{k+1}, l_k\S^{(k)}(g_k)\rangle_{x_{k+1}}) / (\langle g_{k+1}, s_k\T^{(k)}(\eta_k)\rangle_{x_{k+1}} - \langle g_k, \eta_k\rangle_{x_k})$ is proposed as the HS-type.
We develop the idea discussed in \cref{subsubsec:PRP} and use \cref{thm:DY} to propose and analyze the following algorithm.
Its Euclidean counterpart $\beta_{k+1} = \max\{0,\min\{\beta^{\HS}_{k+1},\beta^{\DY}_{k+1}\}\}$ is proposed in~\cite{dai2001efficient}.

\begin{theorem}
\label{thm:HS}
Let $\{x_k\}$ be a sequence generated by \cref{alg:RCGgeneral} under the assumption in \cref{thm:DY} and $\beta_{k+1} = \beta_{k+1}^{\text{\rm{\RHSDY}}} := \max\{0, \min\{\beta^{\text{\rm{\RHS}}}_{k+1}, \beta^{\text{\rm{\RDY}}}_{k+1}\}\}$.
Then, we have $\liminf_{k \to \infty}\|\grad f(x_k)\|_{x_k} = 0$.
\end{theorem}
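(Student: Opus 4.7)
The plan is to reduce \cref{thm:HS} immediately to \cref{thm:DY}, in direct analogy with how \cref{thm:PRP} was reduced to \cref{thm:FR}. The only thing to check is that the clipped choice $\beta_{k+1}^{\RHSDY}$ falls in the admissible range $[0,\beta_{k+1}^{\RDY}]$ required by \cref{thm:DY}; once this is established, every hypothesis of \cref{thm:DY} is verbatim in force and its conclusion transfers.

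First, I would invoke \cref{lem:RDY} under the standing assumption (the hypotheses of \cref{thm:DY}) to conclude that $\beta_{k+1}^{\RDY}>0$ for all $k\ge 0$, so in particular the quantity $\min\{\beta_{k+1}^{\RHS},\beta_{k+1}^{\RDY}\}$ is well defined and $\beta_{k+1}^{\RDY}$ is a genuine positive upper bound to compare against. Next I would verify the two-sided inequality
\begin{equation*}
0\ \le\ \beta_{k+1}^{\RHSDY}\ \le\ \beta_{k+1}^{\RDY}
\end{equation*}
by cases: if $\min\{\beta_{k+1}^{\RHS},\beta_{k+1}^{\RDY}\}\ge 0$ then $\beta_{k+1}^{\RHSDY}=\min\{\beta_{k+1}^{\RHS},\beta_{k+1}^{\RDY}\}\le \beta_{k+1}^{\RDY}$, and otherwise $\beta_{k+1}^{\RHSDY}=0$, which is trivially in $[0,\beta_{k+1}^{\RDY}]$ since $\beta_{k+1}^{\RDY}>0$. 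In both cases the lower bound $\beta_{k+1}^{\RHSDY}\ge 0$ is automatic from the outer $\max\{0,\cdot\}$.

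With this sandwich in hand, I would observe that the choice $\beta_{k+1}=\beta_{k+1}^{\RHSDY}$ satisfies the hypothesis $0\le \beta_{k+1}\le \beta_{k+1}^{\RDY}$ of \cref{thm:DY}, while all other assumptions (\cref{assump:RCG}, \cref{assump:Tk} on $\T^{(k)}$, the $\T^{(k)}$-Wolfe conditions for all $k$, and the generalized $\T^{(k-1)}$-Wolfe conditions for $k\in K_2-\{0\}$) are exactly those imposed in the statement of \cref{thm:HS}. Applying \cref{thm:DY} then yields $\liminf_{k\to\infty}\|\grad f(x_k)\|_{x_k}=0$.

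There is essentially no technical obstacle here; the argument is a one-line reduction, and the only mild care needed is to handle the $\max\{0,\cdot\}$ truncation correctly in the case split above, which is why I would write it out by cases rather than quoting $\min\{\beta_{k+1}^{\RHS},\beta_{k+1}^{\RDY}\}\le \beta_{k+1}^{\RDY}$ alone (that inequality can fail to deliver the lower bound on its own when $\beta_{k+1}^{\RHS}$ is negative, but the outer $\max$ repairs this). The entire proof should occupy only a few lines.
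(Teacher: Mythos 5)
Your proposal is correct and is essentially the paper's own argument: the paper proves \cref{thm:DY} for the whole class $0 \le \beta_{k+1} \le \beta_{k+1}^{\RDY}$ precisely so that \cref{thm:HS} follows by checking the sandwich $0 \le \beta_{k+1}^{\RHSDY} \le \beta_{k+1}^{\RDY}$, which your case split (using $\beta_{k+1}^{\RDY} > 0$ from \cref{lem:RDY}) establishes. The only refinement worth noting is that the positivity of $\beta_{k+1}^{\RDY}$ and the admissibility of the clipped $\beta_{k+1}^{\RHSDY}$ are really interleaved in a single induction on $k$ (each step's descent property feeds the next step's positivity), but \cref{lem:RDY} is stated for the whole class, so your reduction is sound as written.
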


\subsubsection{R-CG methods with modified \boldmath$\beta^{\RLS}$}
\label{subsubsec:LS}
Finally, we discuss the LS-type $\beta^{\RLS}_{k+1} := (\|g_{k+1}\|_{x_{k+1}}^2 - \langle g_{k+1}, l_k \S^{(k)}(g_k)\rangle_{x_{k+1}}) / (- \langle g_k, \eta_k\rangle_{x_k})$ defined in~\eqref{eq:RLS}.
From \cref{thm:CD}, we can similarly propose and analyze the following algorithm, which is a generalization of $\beta_{k+1} = \max\{0,\min\{\beta^{\HS}_{k+1},\beta^{\CD}_{k+1}\}\}$ for the Euclidean case~\cite{andrei2008conjugate}.
\begin{theorem}
\label{thm:LS}
Let $\{x_k\}$ be a sequence generated by \cref{alg:RCGgeneral} under the assumption stated in \cref{thm:CD} and $\beta_{k+1} = \beta_{k+1}^{\text{\rm{\RLSCD}}} :=  \max\{0, \min\{\beta^{\text{\rm{\RLS}}}_{k+1}, \beta^{\text{\rm{\RCD}}}_{k+1}\}\}$.
Then, we have $\liminf_{k \to \infty}\|\grad f(x_k)\|_{x_k} = 0$.
\end{theorem}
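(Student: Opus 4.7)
The plan is to reduce this theorem directly to \cref{thm:CD}, exactly as \cref{thm:PRP} and \cref{thm:HS} are reduced to \cref{thm:FR} and \cref{thm:DY}, respectively. The hypotheses carried over from \cref{thm:CD} already supply \cref{assump:RCG}, the condition \cref{assump:Tk} on $\T^{(k)}$, and step lengths $t_k$ satisfying the generalized $\T^{(k)}$-Wolfe conditions with $c_3 = 0$. Hence, it suffices to verify that the specific choice $\beta_{k+1} = \beta_{k+1}^{\RLSCD}$ fits inside the admissible class $0 \le \beta_{k+1} \le \beta_{k+1}^{\RCD}$ covered by that theorem.

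The verification of the two-sided bound is immediate from the definition
$\beta_{k+1}^{\RLSCD} = \max\{0, \min\{\beta_{k+1}^{\RLS}, \beta_{k+1}^{\RCD}\}\}$. The lower bound $\beta_{k+1}^{\RLSCD} \ge 0$ is built into the outer $\max$. For the upper bound, I would split into the two possible cases: if $\min\{\beta_{k+1}^{\RLS}, \beta_{k+1}^{\RCD}\} \le 0$, then $\beta_{k+1}^{\RLSCD} = 0 \le \beta_{k+1}^{\RCD}$; otherwise $\beta_{k+1}^{\RLSCD} = \min\{\beta_{k+1}^{\RLS}, \beta_{k+1}^{\RCD}\} \le \beta_{k+1}^{\RCD}$. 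Either way, $0 \le \beta_{k+1}^{\RLSCD} \le \beta_{k+1}^{\RCD}$.

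The one point that needs a bit of care, and which I expect to be the main (mild) obstacle, is circularity: \cref{prop:CD} guarantees $\beta_{k+1}^{\RCD} > 0$ and the sufficient descent $\langle \grad f(x_k), \eta_k\rangle_{x_k} \le -\|\grad f(x_k)\|_{x_k}^2$ only under the standing hypothesis that the algorithm uses a $\beta_{k+1}$ inside $[0, \beta_{k+1}^{\RCD}]$. To break this apparent circularity, I would run the induction in \cref{prop:CD} in parallel: at step $k$, the inductive hypothesis yields $\langle g_k, \eta_k\rangle_{x_k} \le -\|g_k\|_{x_k}^2 < 0$, hence $\beta_{k+1}^{\RCD} > 0$ is well-defined, and the bound established in the previous paragraph then yields $0 \le \beta_{k+1}^{\RLSCD} \le \beta_{k+1}^{\RCD}$; \cref{prop:CD}'s inductive step (which only uses $0 \le \beta_{k+1} \le \beta_{k+1}^{\RCD}$ and $\langle g_{k+1}, \T^{(k)}(\eta_k)\rangle_{x_{k+1}} \le 0$ from the generalized $\T^{(k)}$-Wolfe condition with $c_3 = 0$) then propagates the descent property to $k+1$.

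With the admissibility $0 \le \beta_{k+1}^{\RLSCD} \le \beta_{k+1}^{\RCD}$ established for all $k \ge 0$, every hypothesis of \cref{thm:CD} is in force for the sequence $\{x_k\}$ produced with $\beta_{k+1} = \beta_{k+1}^{\RLSCD}$. Applying \cref{thm:CD} directly gives $\liminf_{k\to\infty}\|\grad f(x_k)\|_{x_k} = 0$, completing the proof.
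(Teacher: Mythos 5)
Your proposal is correct and follows essentially the same route as the paper, which justifies \cref{thm:LS} solely by the observation that $0 \le \beta_{k+1}^{\textrm{R-LS--CD}} \le \beta_{k+1}^{\textrm{R-CD}}$ places the choice inside the class covered by \cref{thm:CD}. Your additional remark on running the induction of \cref{prop:CD} in parallel to avoid circularity is a careful elaboration of what the paper leaves implicit, not a different argument.
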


\subsubsection{Discussion on R-CG methods with Property (R-$*$)}
In the Euclidean CG methods, Property ($*$) is proposed in~\cite{gilbert1992global}, which is useful for analyzing $\beta_{k+1}$ with the numerator $g_{k+1}^T(g_{k+1}-g_k)$.
We can generalize Property ($*$) to the Riemannian case and refer to the generalized one as Property (R-$*$) as follows:
\begin{definition}[Property (R-$*$)]
We consider \cref{alg:RCGgeneral} and assume that there exist $\varepsilon, \Gamma > 0$ such that $0 < \varepsilon \le \|\grad f(x_k)\|_{x_k} \leq \Gamma$ holds for all $k \ge 0$.
Under this assumption, we say that \cref{alg:RCGgeneral} has \emph{Property (R-$*$)} if there exist constants $b > 1$ and $\lambda > 0$ such that, for all $k \ge 0$, it holds that $|\beta_{k+1}| \le b$ and
\begin{equation*}
    t_k\|\eta_k\| \le \lambda \implies |\beta_{k+1}| \le \frac{1}{2b}. 
\end{equation*}
\end{definition}

We assume that $l_k$ satisfies $l_k \le \|g_k\|_{x_k} / \|\S^{(k)}(g_k)\|_{x_{k+1}}$ in~\eqref{eq:RPRP}--\eqref{eq:RLS} and that there exists a constant $L > 0$ such that the Lipshitz-like condition for the gradient, $\|g_{k+1} - l_k\S^{(k)}(g_k)\|_{x_{k+1}} \le L t_k\eta_k$, holds.
Then, the R-CG methods with $\beta^{\RPRP}_{k+1}$, $\beta^{\RHS}_{k+1}$, and $\beta^{\RLS}_{k+1}$ have Property (R-$*$) under some additional assumptions.

For example, we can prove that the PRP-type of R-CG methods have Property~\mbox{(R-$*$)} as follows.
If $\varepsilon \le \|g_k\|_{x_k} \le \Gamma$, we have
\begin{equation*}
    |\beta^{\RPRP}_{k+1}| \leq \frac{\|g_{k+1}\|_{x_{k+1}}^2 + \|g_{k+1}\|_{x_{k+1}}l_k\|\S^{(k)}(g_k)\|_{x_{k+1}}}{\|g_k\|_{x_k}^2}
    \le \frac{2\Gamma^2}{\varepsilon^2} =: b^{\RPRP}.
\end{equation*}
Furthermore, if $t_k\eta_k \le \lambda^{\RPRP} := \varepsilon^2 / (2L\Gamma b^{\RPRP})$ holds, then
\begin{equation*}
    |\beta^{\RPRP}_{k+1}| \le \frac{\|g_{k+1}\|_{x_{k+1}}\|g_{k+1}-l_k\S^{(k)}(g_k)\|_{x_{k+1}}}{\|g_k\|_{x_k}^2}
    \le \frac{\Gamma L\lambda^{\RPRP}}{\varepsilon^2} = \frac{1}{2b^{\RPRP}}.
\end{equation*}

In the Euclidean space, Property ($*$) can be used to show that the CG methods with $\beta^{\PRP+}_{k+1} := \max\{0, \beta^{\PRP}_{k+1}\}$, $\beta^{\HS+}_{k+1} := \max\{0, \beta^{\HS}_{k+1}\}$, and $\beta^{\LS+}_{k+1} := \max\{0, \beta^{\LS}_{k+1}\}$ have global convergence properties~\cite{gilbert1992global}.
However, it may not be clear how to extend this discussion to the Riemannian case.
This issue is left for future studies.

\subsection{Summary of the theoretical results}
We summarize the discussion in this section as \cref{tab}.
Although our framework addresses a general map $\T^{(k)}$, the table shows only examples of $\T^{(k)}$ for which corresponding previous studies exist.
In the table, each column corresponds to the choice of $\T^{(k)}$ in \cref{alg:RCGgeneral}: the parallel translation $\P$ along the geodesic; differentiated retraction $\mathcal{T}^R$; inverse retraction $(R^{\bw})^{-1}$.
Each row corresponds to the choice of $\beta_{k+1}$ in \cref{alg:RCGgeneral}.
\setlength{\tabcolsep}{1.5mm}
\begin{table}[htbp]
{\footnotesize
  \caption{Summary of the results of previous studies and those of this paper (references shown in parentheses mean that the studies are about specific problems or without convergence analyses)}  \label{tab}
\begin{center}
  \begin{tabular}{|c|c|c|c|} \hline
   & \begin{tabular}{c}$\P$\\ (\cref{ex:T})\end{tabular} & \begin{tabular}{c}$\mathcal{T}^R$\\ (\cref{ex:T})\end{tabular} & 
   \begin{tabular}{c}$(R^{\bw})^{-1}$\\ (\cref{ex:T})\end{tabular}\\ \hline
    \begin{tabular}{c}FR\\ (\cref{thm:FR})\end{tabular}  & (\hspace{-.01mm}\cite{smith1994optimization}) & \cite{ring2012optimization,sato2015new} & \cite{zhu2020riemannian}\\
    \begin{tabular}{c}DY\\ (\cref{thm:DY})\end{tabular}  & (\hspace{-.01mm}\cite{kleinsteuber2007intrinsic}) & \cite{sato2016dai} & \cite{zhu2020riemannian}\\
    \begin{tabular}{c}CD\\ (\cref{thm:CD})\end{tabular}   & -- & -- & --\\
    \begin{tabular}{c}PRP\\ (\cref{subsubsec:PRP})\end{tabular} & (\hspace{-.01mm}\cite{edelman1998geometry,smith1994optimization})   & (\hspace{-.01mm}\cite{AbsMahSep2008})  & --\\
    \begin{tabular}{c}HS\\ (\cref{subsubsec:HS})\end{tabular}   & (\hspace{-.01mm}\cite{kleinsteuber2007intrinsic}) & (\hspace{-.01mm}\cite{boumal2014manopt}) & --\\
    \begin{tabular}{c}LS\\ (\cref{subsubsec:LS})\end{tabular} & (\hspace{-.01mm}\cite{smith1994optimization}) &  --  & --\\
    \begin{tabular}{c}PRP--FR\\ (\cref{thm:PRP})\end{tabular} & --  & -- & --\\
    \begin{tabular}{c}HS--DY\\ (\cref{thm:HS})\end{tabular} & -- & \cite{sakai2020hybrid}   & --\\
    \begin{tabular}{c}LS--CD\\ (\cref{thm:LS})\end{tabular} & -- &  -- & --\\ \hline
    
  \end{tabular}
\end{center}
}
\end{table}

\section{Numerical experiments}
\label{sec:6}
In this section, we compare \cref{alg:RCGgeneral} with several choices of $\beta_{k+1}$.
Specifically, we use $\beta^{\RFR}_{k+1}$, $\beta^{\RDY}_{k+1}$, $\beta^{\RCD}_{k+1}$, $\beta^{\RPRP}_{k+1}$, $\beta^{\RHS}_{k+1}$, and $\beta^{\RLS}_{k+1}$ in~\eqref{eq:RFR}--\eqref{eq:RLS}, and $\beta^{\RPRPFR}_{k+1}$, $\beta^{\RHSDY}_{k+1}$, and $\beta^{\RLSCD}_{k+1}$ in \cref{subsec:PRP_HS_LS} as hybrid methods.
Since one of our contributions is that the proposed class of R-CG methods (\cref{alg:RCGgeneral}) offers the use of a user-selected map $\T^{(k)}$, we deal with two optimization problems using different choices of $\T^{(k)}$.

The experiments were carried out in double-precision floating-point arithmetic on a PC (Intel Xeon CPU E5-2620 v4, 128 GB RAM) equipped with MATLAB R2021b.
In all the experiments below, we implemented the R-CG methods based on \texttt{conjugategradient} in Manopt~\cite{boumal2014manopt}, which is a MATLAB toolbox for Riemannian optimization.
The step length computed in each iteration satisfies the Armijo condition~\eqref{eq:M_armijo} by default.
The iterations of the R-CG methods were terminated when $\|\grad f(x_k)\|_{x_k} / \|\grad f(x_0)\|_{x_0}< 10^{-6}$ was attained.

\subsection{R-CG methods on the product of Grassmann manifolds with the projection-based vector transport for singular value decomposition}

We consider \cref{prob:gen} of large size with $\M := \Grass(p,m) \times \Grass(p,n)$,
where $m = 50,000$, $n = 3,000$, and $p = 100$, implying that $\dim \M = p(m-p) + p(n-p) = 5,280,000$.
Each point $W$ on the Grassmann manifold $\Grass(p,n) \simeq \St(p,n) / \mathcal{O}(p)$ is expressed as an equivalence class $[U] := \{UQ \mid Q \in \mathcal{O}(p)\}$ with a representative $U \in \St(p,n)$.
We define the objective function $f$ on $\M$ as $f([U], [V]) := -\|U^T AV\|_F^2/2$, where $A$ is a randomly generated $m \times n$ matrix and $\|\cdot \|_F$ is the Frobenius norm.

In the experiments, we used the polar-based retraction and the projection-based vector transport $\mathcal{T}^P$, which are the default settings in Manopt's \texttt{grassmannfactory}, and set $\mathscr{T}^{(k)} := \mathcal{T}^P_{t_k\eta_k}( \eta_k)$ and $s_k :=\min\{1, \|\eta_k\|_{x_k}/ \|\T^{(k)}(\eta_k)\|_{x_{k+1}}\}$ in \cref{alg:RCGgeneral}.
Note that this $\T^{(k)}$ satisfies \cref{assump:Tk} as discussed in \cref{ex:P_Grass}.
Similarly, we set $\S^{(k)}(g_k) := \mathcal{T}^P_{t_k\eta_k}(g_k)$ and $l_k := \min\{1, \|g_k\|_{x_k} / \|\S^{(k)}(g_k)\|_{x_{k+1}}\}$ in~\eqref{eq:RPRP}--\eqref{eq:RLS}.
We compared $\beta^{\RSD}_{k+1} := 0$ (steepest descent method) and the nine types of $\beta_{k+1}$ mentioned above (six standard types and three hybrid ones) with the same initial point, which was randomly generated.
\begin{figure}[htbp]
  \centering
  \includegraphics[width = \textwidth]{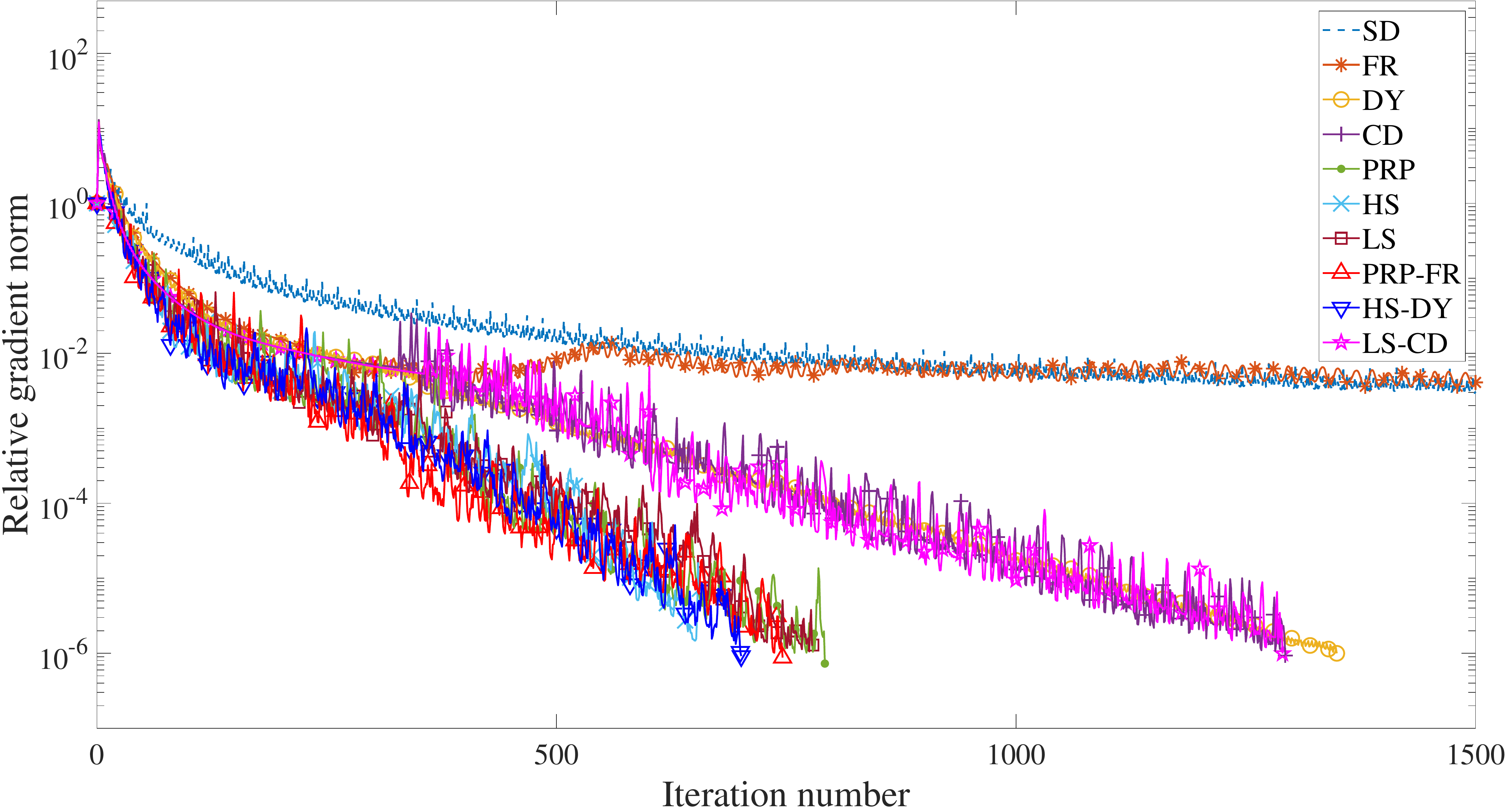}
  \caption{Numerical results for \cref{prob:gen} on $\M = \Grass(p,m) \times \Grass(p,n)$. The horizontal axis represents the iteration number $k$, and the vertical axis represents the relative norm of the gradient of the objective function $\|\grad f(x_k)\|_{x_k} / \|\grad f(x_0)\|_{x_0}$. Markers are put on the graphs at every 20 iterations and at the last iteration for visibility.}
  \label{fig:grassmann}
\end{figure}

\cref{fig:grassmann} shows the convergence histories of the ten methods.
In this figure, we observe several clusters of the graphs: SD and FR; DY, CD, and LS--CD; PRP, HS, LS, PRP--FR, and HS--DY.
We observe that SD is the slowest as expected, and FR, DY, and CD are not so fast either.
These three types of R-CG methods are considered similar as discussed in \cref{sec:5}.
The other three types; PRP, HS, and LS, are faster than FR, DY, and CD.
Regarding the hybrid methods, PRP--FR and HS--DY methods showed similar performances to PRP and HS.
On the other hand, LS--CD is slower than LS.
In this case, LS--CD seems to be slowed down by CD.

We further applied the Riemannian trust-region (TR) method~\cite{absil2007trust} for the same problem with the same initial point and compared computational time.
The time (in seconds) taken for the relative gradient norm to become less than $10^{-6}$ is summarized in \cref{tab:time_Grass}.
As \cref{fig:grassmann} implies, SD and FR did not achieve the stopping criterion within $30$ minutes.
Furthermore, although the convergence of HS was fast, it failed to find a step length satisfying the Armijo condition at $k = 656$.
The minimum value of the relative gradient norm that HS attained was $1.47 \times 10^{-6}$ (at $k = 651$ in $827.7$ seconds).
It is worth noting that the trust-region method took much longer time than most CG methods. Of course, the trust-region method has the advantage of superlinear convergence once a point sufficiently close to an optimal solution is obtained.
However, \cref{tab:time_Grass} shows that the CG methods are competitive enough for the purpose of obtaining a reasonably good solution.

\setlength{\tabcolsep}{1.4mm}
\begin{table}[htbp]
{\footnotesize
  \caption{Computation time [s] required for $\|\grad f(x_k)\|_{x_k} / \|\grad f(x_0)\|_{x_0} < 10^{-6}$ to be satisfied}  \label{tab:time_Grass}
\begin{center}
  \begin{tabular}{|c|c|c|c|c|c|c|c|c|c|c|c|} \hline
   & SD & FR & DY & CD & PRP & HS & LS & PRP--FR & HS--DY & LS--CD & TR\\ \hline
Time [s] & -- & -- & 1594.9 & 1590.6 & 1026.5 & -- & 974.6 & 951.3 & 890.2 & 1610.0 &25961.6 \\
  \hline
  \end{tabular}
\end{center}
}
\end{table}

\subsection{R-CG methods on the manifold of symmetric positive definite matrices for solving Lyapunov equation}

Subsequently, we consider \cref{prob:gen} with $\M := \SPD(n)$, where $n = 50$ and $\M$ is endowed with the Bures--Wasserstein geometry~\cite{malago2018wasserstein}.
We define the objective function $f$ on $\M$ as $f(X) := \tr(XAX) - \tr(XC)$, where $A, C \in \Sym(n)$ are generated in the same way as in (\texttt{Ex2}) of \cite{han2021riemannian}.
The resultant optimization problem on $\M$ is for solving the Lyapunov equation $AX + XA = C$ for $X \in \M$.

We used the exponential retraction and set $\T^{(k)}(\eta_k) := \eta_k$ and $s_k := 1$ in \cref{alg:RCGgeneral}, and $\S^{(k)}(g_k) := g_k$ and $l_k := 1$ for~\eqref{eq:RPRP}--\eqref{eq:RLS}.
We again compared the R-SD and nine types of R-CG methods, where the initial point was $X_0 = I_n$.
\begin{figure}[htbp]
  \centering
  \includegraphics[width = \textwidth]{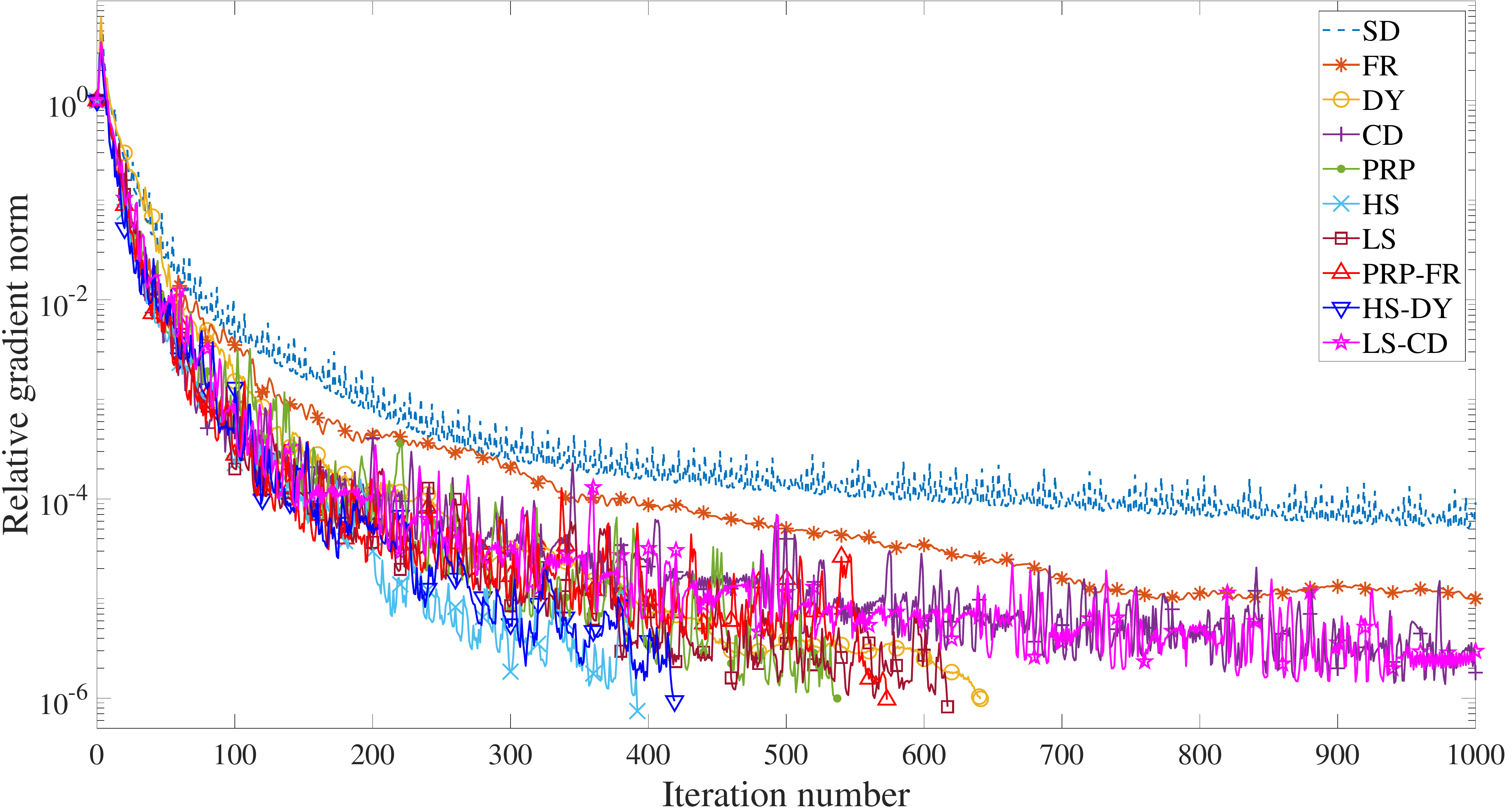}
  \caption{Numerical results for \cref{prob:gen} on $\M = \SPD(n)$. The horizontal axis represents the iteration number $k$, and the vertical axis represents the relative norm of the gradient of the objective function $\|\grad f(X_k)\|_{X_k} / \|\grad f(X_0)\|_{X_0}$. Markers are put on the graphs at every 20 iterations and at the last iteration for visibility.}
  \label{fig:SPD}
\end{figure}

Overall, the results of \cref{fig:SPD} can be explained similarly to those in the previous subsection.
It is observed that SD is the slowest as expected.
Among the R-CG methods, CD is faster than FR and DY, but slower than PRP, HS, LS, and the hybrid methods (except for LS--CD).
LS--CD is slower than PRP--FR and HS--DY possibly because CD negatively affected the performance of LS.

\section{Concluding remarks}
\label{sec:7}
In this paper, to address Riemannian unconstrained optimization problems (\cref{prob:gen}), we proposed a general framework of R-CG methods (\cref{alg:RCGgeneral}) with maps $\T^{(k)}$ and scaling parameters $s_k$ for $k \ge 0$.
Several conditions on $\T^{(k)}$, $s_k$, and the step lengths $t_k$ were developed to provide convergence analyses for the types of \cref{alg:RCGgeneral}.

An important parameter characterizing the (R-)CG methods is $\beta_{k+1}$ in~\eqref{eq:E_direction} (Euclidean case) and~\eqref{eq:RCG_direction} (Riemannian case).
As the six standard types of R-CG methods, we generalized (omitting the subscript $k+1$) $\beta^{\FR}$, $\beta^{\DY}$, $\beta^{\CD}$, $\beta^{\PRP}$, $\beta^{\HS}$, and $\beta^{\LS}$ in Euclidean spaces to the Riemannian counterparts $\beta^{\RFR}$, $\beta^{\RDY}$, $\beta^{\RCD}$, $\beta^{\RPRP}$, $\beta^{\RHS}$, and $\beta^{\RLS}$, respectively.
We extended Zoutendijk's theorem to our proposed framework of the R-CG methods and 
extensively analyzed the FR-, \mbox{DY-,} and CD-types of R-CG methods to guarantee the global convergence properties.
The analyses also claim that any choice of nonnegative $\beta$ that is not smaller than $\beta^{\RFR}$, $\beta^{\RDY}$, or $\beta^{\RCD}$ with appropriate assumptions ensures the global convergence of R-CG methods.
For the PRP-, HS-, and LS-types of R-CG methods, even whose Euclidean versions are not necessarily globally convergent, we discussed why they can outperform the other three types of R-CG methods by explaining that they are considered to be equipped with a restart mechanism when jamming occurs.
Furthermore, modifying them and exploiting the analyses of the FR-, DY-, and CD types of R-CG methods, we proposed several practically and theoretically appropriate methods.

We demonstrated numerical experiments to observe the performances of several R-CG methods in the framework of \cref{alg:RCGgeneral}.
The CD-type of R-CG methods have been rarely used in the literature; however, they are possibly superior to the FR- and DY-type methods.
On the other hand, considering our experiments, the PRP-, HS-, and LS-types of R-CG methods behaved much better than the FR-, DY-, and CD-types.
If guaranteeing the global convergence is important, it is also nice to use the hybrid types, i.e., PRP--FR, HS--DY, and LS--CD types of methods.

Since there are various types of CG methods even for the Euclidean case, this paper does not cover all the existing methods.
However, we believe that the proposed general framework of \cref{alg:RCGgeneral} will be the foundation for future studies on R-CG methods.

\section*{Conflict of interest}
The author declares that he has no conflict of interest.

\bibliographystyle{abbrv}
\bibliography{sato_bib}   

\end{document}